\newcommand{\N}{\mathbb{N}}                     % the natural numbers
\newcommand{\Z}{\mathbb{Z}}                     % the integer numbers
\newcommand{\R}{\mathbb{R}}                     % the real line
\newcommand{\C}{\mathbb{C}}                     % the complex plane
\newcommand{\T}{\mathbb{T}}                     % the torus
\newcommand{\set}[2]{\left\{{#1}\mid{#2}\right\}}       % the set
\newcommand{\ind}{\mathrm{ind}}               % Fredholm index
\newcommand{\rank}{\mathrm{rank\,}}             % rank
\newcommand{\crit}{\mathrm{crit}}               % Critical set
\newtheorem{thm}{Theorem}[section]               % numbered absolutely
\newtheorem*{thm*}{Theorem}               % no number
\newtheorem{cor}[thm]{Corollary}        % numbered along with Theorem
\newtheorem*{cor*}{Corollary}        % no number
\newtheorem{lem}[thm]{Lemma}            % numbered along with Theorem
\newtheorem{prop}[thm]{Proposition}     % numbered along with Theorem
\theoremstyle{definition}
\newtheorem{rem}[thm]{Remark}           % numbered along with Theorem  
\title[Infinitely many periodic orbits of exact magnetic flows]{Infinitely many periodic orbits of exact magnetic flows\\ on surfaces for almost every subcritical energy level}
\author[Abbondandolo]{A. Abbondandolo}
\address{Ruhr Universit\"at Bochum, Fakult\"at f\"ur Mathematik\newline\indent Geb\"aude NA 4/33, D-44801 Bochum, Germany}
\email{alberto.abbondandolo@rub.de}
\author[Macarini]{L. Macarini}
\address{Universidade Federal do Rio de Janeiro, Instituto de Matem\'atica\newline\indent Cidade Universit\'aria, CEP 21941-909, Rio de Janeiro, Brazil}
\email{leonardo@impa.br}
\author[Mazzucchelli]{M. Mazzucchelli}
\address{CNRS, \'Ecole Normale Sup\'erieure de Lyon, Unit\'e de Math\'ematiques Pures et Appliqu\'ees\newline\indent  69364 Lyon Cedex 07, France}
\email{marco.mazzucchelli@ens-lyon.fr}
\author[Paternain]{G. P. Paternain}
\address{Department of Pure Mathematics and Mathematical Statistics\newline\indent  University of Cambridge, Cambridge CB3 0WB, UK}
\email{g.p.paternain@dpmms.cam.ac.uk}
\subjclass[2000]{37J45, 58E05}
\keywords{Exact magnetic flows, periodic orbits, closed geodesics}
\date{April 30, 2014}
\begin{document}

\renewcommand{\theenumi}{\roman{enumi}}
\renewcommand{\labelenumi}{(\theenumi)}

\begin{abstract}
We consider an exact magnetic flow on the tangent bundle of a closed surface. We prove that for almost every energy level $\kappa$ below the Ma\~n\'e critical value of the universal cover there are infinitely many periodic orbits with energy $\kappa$.

\tableofcontents
\end{abstract}

\maketitle

\section*{Introduction}

In this paper we study the existence of periodic orbits of prescribed energy of an exact magnetic flow on the tangent bundle of a closed surface.

Let $M$ be a closed surface. An exact magnetic flow on $TM$ is induced by the choice of a Riemannian metric $g$ and of a smooth 1-form $\theta$ on $M$. Let $Y:TM\to TM$ be the endomorphism uniquely defined by $d\theta=g(Y(\cdot),\cdot)$. A curve $\gamma:\R \rightarrow M$ is said to be a magnetic geodesic if it solves the  ODE
\begin{equation}
\label{mageq}
\nabla_t \dot{\gamma} = Y(\dot{\gamma}),
\end{equation}
where $\nabla_t$ denotes the Levi-Civita covariant derivative. The flow on $TM$ of this second order ODE is called an exact magnetic flow. The exactness refers to the fact that the 2-form $d\theta$ is exact. When $d\theta$ is replaced by a non-exact 2-form one talks about a non-exact magnetic flow. These flows are models for the motion of a particle in a magnetic field with Lorentz force $Y$ and were put into the context of modern dynamical systems by Arnold in~\cite{arn61b}.

Unlike the general non-exact case, exact magnetic flows admit a Lagrangian formulation: The equation (\ref{mageq}) is the Euler-Lagrange equation of the Lagrangian
\begin{align}\label{e:quadratic_Lagrangian}
L : TM \rightarrow \R, \qquad L(x,v) = \frac{1}{2} |v|_x^2 - \theta_x(v), \quad \forall (x,v)\in TM, 
\end{align}
where $|\cdot|_x$ denotes the norm on $T_x M$ which is induced by $g_x$. The energy which is associated to this autonomous Lagrangian is the function
\[
E : TM \rightarrow \R, \qquad E(x,v) = \frac{1}{2} |v|_x^2,  \quad \forall (x,v)\in TM.
\]
Therefore, the energy levels of this system are the same ones of the geodesic flow of $g$. However, the dynamics on these levels is quite different and changes with the value of the energy, because of the non-homogeneity of the Lagrangian. Roughly speaking, the magnetic flow on $E^{-1}(\kappa)$ behaves as a geodesic flow for high values of $\kappa$, where the dominant term in the Lagrangian is given by the metric $g$, and quite differently for low values of $\kappa$, for which the magnetic form $d\theta$ becomes dominant.

A critical value of the energy which marks a change in the dynamics and is relevant in this paper is the number
\[
c_u := - \inf \left\{ \frac{1}{T} \int_0^T L(\gamma,\dot{\gamma})\, dt \, \Big| \, \gamma: \R \rightarrow M, \mbox{ $T$-periodic contractible smooth curve}\right\},
\]
which is called {\em Ma\~n\'e critical value of the universal cover}. The specification ``of the universal cover'' refers to the fact that the infimum is taken over all contractible curves: By considering curves whose lift to other covering spaces of $M$ is closed, one finds other Ma\~n\'e critical values, which play different roles in the dynamical and geometric properties of the energy levels (see \cite{man97, cipp98}). The value $c_u$ is the smallest of all Ma\~n\'e critical values and, if  $d\theta\neq 0$, is strictly positive.

For an energy level $E^{-1}(\kappa)$ with $\kappa>c_u$ one can prove the same lower bounds for the number of periodic orbits which hold for closed Finsler geodesics. Indeed, if $\kappa$ is larger than $c_0$, i.e.~ the Ma\~{n}\'e critical value associated to the abelian cover of $M$, the flow on $E^{-1}(\kappa)$ is conjugated -- up to a time reparametrization -- to a Finsler geodesic flow. If $\kappa\in (c_u,c_0]$ this is not anymore true, but periodic orbits on $E^{-1}(\kappa)$ can be found as critical points of a functional which behaves essentially as the Finsler geodesic energy functional (see \cite{con06} and \cite{abb13}). In particular, when $M=S^2$ one has at least two periodic orbits on $E^{-1}(\kappa)$ for every $\kappa>c_u$, as proven for Finsler geodesic flows by Bangert and Long \cite{bl10}. Examples due to Katok \cite{kat73} show that in this case the number of periodic orbits on $E^{-1}(\kappa)$ can be exactly two.

In this paper we deal with the more mysterious range of energies $(0,c_u)$. When $\kappa$ is in this interval, $E^{-1}(\kappa)$ has always at least {\em one} periodic orbit. This result essentially goes back to the work of Taimanov  \cite{tai92b,tai92c,tai92}, but  was put into the context of Ma\~{n}\'e critical values and reproved using ideas from geometric measure theory by Contreras, Macarini and Paternain \cite{cmp04}. This result is the best one known so far concerning existence for {\em all} energy levels in the range $(0,c_u)$. For {\em almost every} $\kappa$ in $(0,c_u)$ the existence of at least {\em three} periodic orbits is known: The second periodic orbit was found by Contreras \cite{con06} (on manifolds of arbitrary dimension and for any Tonelli Lagrangian) and the third one by three of the authors of this paper in  \cite{amp13}.  In the latter paper the existence of {\em infinitely many} periodic orbits on $E^{-1}(\kappa)$ is also proved, under the assumption that $\kappa$ belongs to a suitable full-measure subset of $(0,c_u)$ and that all the periodic orbits on $E^{-1}(\kappa)$ are transversally non-degenerate. The aim of this paper is to remove all non-degeneracy assumptions and to prove the following: 

\begin{thm*}
Let $(M,g)$ be a closed Riemannian surface and let $\theta$ be a smooth 1-form. For almost every $\kappa$ in $(0,c_u)$, the exact magnetic flow induced by $g$ and $\theta$ has infinitely many periodic orbits on the energy hypersurface $E^{-1}(\kappa)$.
\end{thm*}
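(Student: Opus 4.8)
The plan is to exploit the free-period action functional approach to subcritical magnetic flows, as developed in \cite{amp13} and \cite{abb13}, and to show that the non-degeneracy hypotheses there can be removed by a local Morse-theoretic analysis at (possibly degenerate) critical points, combined with a monotonicity-in-energy (Struwe-type) argument that recovers good parameters. Concretely, for $\kappa\in(0,c_u)$ one works on the space $\Lambda M \times (0,+\infty)$ of pairs $(\gamma,T)$ with $\gamma\in W^{1,2}(\R/\Z,M)$ contractible and period $T>0$, and considers the free-period action functional $S_\kappa(\gamma,T) = T\int_0^1 L(\gamma,\dot\gamma/T)\,dt + \kappa T$, whose (nonconstant) critical points are exactly the contractible periodic magnetic geodesics with energy $\kappa$. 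The subcriticality $\kappa<c_u$ guarantees that $S_\kappa$ is bounded below on the space of contractible loops, that $S_\kappa>0$ on nonconstant critical points, and — crucially — that the Palais--Smale condition holds on sublevel sets bounded away from the constant loops (this is the content of the existence arguments of Taimanov and \cite{cmp04}, reworked in \cite{con06,abb13}).

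The core of the proof is the following dichotomy, to be run for almost every $\kappa$: either there are infinitely many contractible periodic orbits of energy $\kappa$, and we are done, or there are only finitely many, and we derive a contradiction by a minimax/iteration argument. First I would set up the Struwe monotonicity trick: define $c(\kappa)$ to be a minimax value of $S_\kappa$ over a fixed homology class of the loop space (using that $\Lambda S^2$ or $\Lambda M$ has nontrivial topology in the relevant dimension — for $M=S^2$ one uses the generator coming from $H_*(\Lambda S^2)$, and for higher genus the class of a noncontractible-in-$\Lambda$ family of contractible loops), and observe that $\kappa\mapsto c(\kappa)$ is monotone and hence differentiable almost everywhere. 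At a point of differentiability, the standard Struwe argument produces a Palais--Smale sequence at level $c(\kappa)$ with an \emph{a priori} bound on the periods $T$, which by the Palais--Smale condition converges to a genuine periodic orbit $z_\kappa$ with $S_\kappa(z_\kappa)=c(\kappa)$. The extra information is the period bound, which prevents the orbit from ``escaping to infinity'' and will be the mechanism that produces new orbits.

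Next, assuming only finitely many contractible periodic orbits $z_1,\dots,z_N$ of energy $\kappa$ (together with all their iterates $z_i^m$), I would run the Bangert--Hingston / Gromoll--Meyer iteration theory for the degenerate critical \emph{circles} (or $\R$-orbits, after fixing the reparametrization symmetry) of the free-period functional $S_\kappa$. The key local inputs are: the mean index $\bar\Delta(z_i)\ge 0$ of each orbit; the fact that a closed surface magnetic system is, for the purposes of this functional, of ``mountain-pass'' or ``disk-filling'' type so that the minimax level $c(\kappa)$ sits at a fixed finite positive value while arbitrarily high iterates $z_i^m$ have either arbitrarily large action (if $S_\kappa(z_i)>0$, which holds by subcriticality) or, in the borderline, a controlled local homology that is uniformly bounded in degree by the Gromoll--Meyer estimate $\dim C_q(S_\kappa,z_i^m)=0$ for $q$ outside an interval of length depending only on $\dim M$ and $\bar\Delta(z_i)$. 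Comparing the global Betti numbers of the sublevel sets of $S_\kappa$ (which for $M=S^2$, via $\Lambda S^2$, grow or at least are nonzero in infinitely many degrees — the Gromoll--Meyer / Bott argument) against the contribution of finitely many orbits and their iterates yields the contradiction in the spirit of the degenerate resonance identity of \cite{bl10,bh84}. In genus $\ge 1$ one instead uses that a single orbit and its iterates cannot carry the minimax class for a full-measure set of energies, together with the Bangert--Hingston growth of $\pi_1$, to force a second geometrically distinct orbit, and then bootstraps.

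The main obstacle I expect is precisely the \emph{degeneracy} at the critical orbits: without the transversal non-degeneracy assumed in \cite{amp13}, one cannot simply count critical points by their index, and one must carry out the full local homological analysis (Gromoll--Meyer theory adapted to the free-period functional, whose Hessian has an extra degenerate direction coming from the period parameter and whose critical set is never isolated because of the $S^1$- or $\R$-reparametrization). In particular one must prove that the local homology of an iterated orbit $z_i^m$ vanishes in degrees that grow with $m$ whenever $\bar\Delta(z_i)>0$, and is eventually periodic in $m$ when $\bar\Delta(z_i)=0$, and then show that the $\bar\Delta=0$ case is incompatible with the subcritical minimax geometry (here the positivity $S_\kappa>0$ on nonconstant orbits, a genuinely subcritical phenomenon, is what saves us, since $S_\kappa(z_i^m)=m\,S_\kappa(z_i)\to\infty$ rules out an infinite supply of critical points at the bounded level $c(\kappa)$ coming from a single orbit). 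Handling the case $M=S^2$ also requires knowing that the local homology can contribute in the critical degree dictated by $H_*(\Lambda S^2;\mathbb{Z}_2)$, which is where the subtle ``local homology in the mountain-pass degree'' estimates of Bangert--Long for Finsler $S^2$ get imported, with the magnetic free-period functional playing the role of the Finsler energy.
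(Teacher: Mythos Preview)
Your proposal rests on several claims about the subcritical regime that are the reverse of what actually holds, and this undermines the strategy rather than being a mere technical gap. For $\kappa<c_u$ the free-period action $\mathbb{S}_\kappa$ is \emph{unbounded from below} on every connected component of $\mathcal{M}$ (this is essentially the definition of $c_u$), not bounded from below as you assert. The Palais--Smale condition \emph{fails} in this range; it is recovered only on subsets where the period $T$ is bounded and bounded away from zero, which is precisely why the Struwe monotonicity trick is needed (to obtain the period bound a posteriori, not as a consequence of PS). Most damagingly, the Taimanov local minimizer $\alpha_\kappa$ has $\mathbb{S}_\kappa(\alpha_\kappa)<0$, so $\mathbb{S}_\kappa(\alpha_\kappa^m)=m\,\mathbb{S}_\kappa(\alpha_\kappa)\to -\infty$, not $+\infty$. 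Your proposed mechanism for ruling out the $\overline{\ind}=0$ case---``$S_\kappa(z_i^m)\to\infty$ rules out an infinite supply of critical points at the bounded level $c(\kappa)$''---therefore collapses: the critical values coming from iterates drift downward, and the relevant minimax levels tend to $-\infty$ as well. The loop-space topology you invoke ($H_*(\Lambda S^2)$, Bangert--Hingston growth) plays no role here.

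The paper's argument is organized quite differently. The negative-action local minimizer $\alpha_\kappa$ together with the unboundedness from below create a \emph{mountain pass} geometry: for each $n$ one defines a minimax value $c_n(\kappa)$ over paths from the $n$-th iterate of a compact set $M_\kappa$ of local minimizers to a fixed curve $\mu^n$ of lower action. One shows $\kappa\mapsto c_n(\kappa)$ is monotone (this requires some care, since $\alpha_\kappa$ need not depend continuously on $\kappa$) and that $c_n\to -\infty$ uniformly. At a common point of differentiability the Struwe argument produces, for each $n$, a critical point detected by a one-dimensional minimax at level $c_n(\kappa)$. The decisive new ingredient replacing the non-degeneracy hypothesis of \cite{amp13}---and doing the work you assign to Gromoll--Meyer local homology---is Theorem~\ref{t:iterated_mountain_pass}: a sufficiently high iterate of any periodic orbit is \emph{never} a mountain pass for $\mathbb{S}_\kappa$, in the sense that the sublevel $\{\mathbb{S}_\kappa<nc\}$ is not locally disconnected by its critical circle. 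The degenerate mean-index-zero case is handled via Proposition~\ref{p:iterated_Morse_index} (mean index zero forces all free-period indices to vanish) and a Bangert-type homotopy, not via local homology growth estimates. Since $c_n(\kappa)\to -\infty$, for large $n$ the level $c_n(\kappa)$ lies below all low iterates of any finite family of orbits, so under a finiteness assumption the mountain pass would have to sit at a high iterate---contradiction.
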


The proof of this theorem  is variational and is based on the study of the {\em free-period Lagrangian action functional}
\[
\mathbb{S}_{\kappa} (\gamma) := \int_0^T \bigl( L(\gamma,\dot\gamma) + \kappa \bigr)\, dt
\]
on the space of periodic curves $\gamma: \R \rightarrow M$ of arbitrary period $T$. In order to give a differentiable structure to the space of periodic curves with arbitrary period one identifies the $T$-periodic curve $\gamma$ with the pair $(x,T)$, where $x(s):= \gamma(sT)$ is 1-periodic. In this way one obtains the functional
\[
\mathbb{S}_{\kappa} : H^1(\T,M) \times (0,+\infty) \rightarrow \R, \qquad \mathbb{S}_{\kappa}(x,T) =  T \int_{\T} \bigl( L(x,\dot{x}/T) + \kappa \bigr)\, ds,
\]
on the Hilbert manifold $\mathcal{M}:= H^1(\T,M) \times (0,+\infty)$, where $\T:= \R/\Z$ and $H^1(\T,M)$ denotes the space of 1-periodic curves on $M$ of Sobolev class $H^1$. This functional is smooth and its critical points are exactly the periodic orbits of energy $\kappa$. The main difficulties in working with this functional are that for $\kappa< c_u$ it is unbounded from below on every connected component of $\mathcal{M}$ and fails to satisfy the Palais-Smale condition.

Up to lifting our Lagrangian system to the orientation double cover of $M$, we can assume that $M$ is orientable. As it is shown in \cite{amp13}, the periodic orbit $\alpha_{\kappa}$ which was found by Taimanov is a local minimizer of $\mathbb{S}_{\kappa}$ with $\mathbb{S}_{\kappa}(\alpha_{\kappa})<0$. The iterates $\alpha^n_{\kappa}$ of $\alpha_{\kappa}$ are also local minimizers and their $\mathbb{S}_{\kappa}$-action goes to $-\infty$  for $n\rightarrow \infty$. Using the fact that $\mathbb{S}_{\kappa}$ is unbounded from below on every connected component of $\mathcal{M}$, one would like to find other periodic orbits as mountain pass critical points associated to paths in $\mathcal{M}$ starting at some given iterate $\alpha^n_{\kappa}$ and ending at some curve with lower $\mathbb{S}_{\kappa}$-action. The main difficulty is the already mentioned lack of the Palais-Smale condition. However, thanks to the monotonicity of the function $\kappa \mapsto \mathbb{S}_{\kappa}$, one can overcome this difficulty by an argument due to Struwe \cite{str90}, which allows to find converging Palais-Smale sequences for almost every value of $\kappa$. This strategy was successfully implemented in \cite{amp13} and for a sufficiently high value of $n$ produces a mountain pass periodic orbit which is different from the one found by Contreras in \cite{con06}. When it is transversally non-degenerate, the $n$-th mountain pass periodic orbit can be shown to have positive mean Morse index. By this fact, in the non-degenerate case one can exclude that the infinitely many mountain pass critical points are the iterates of finitely many periodic orbits and gets the above mentioned result from \cite{amp13}.

If we drop the non-degeneracy assumption, mountain pass critical points might have mean Morse index zero and the above argument fails. In this paper we show that nevertheless the infinitely many mountain pass critical points  cannot be the iterates of finitely many periodic orbits. The reason is the following result, which we prove for manifolds $M$ of arbitrary dimension and for more general Lagrangians, since it might be of independent interest: A sufficiently high iterate of a periodic orbit cannot be a mountain pass critical point of the free-period action functional. See Theorem \ref{t:iterated_mountain_pass} below for the precise statement. Its proof uses the fact that periodic orbits with mean Morse index zero have Morse index zero: This is known to be true for the fixed period action functional, and in Section \ref{morseind} below we prove it for the free-period action functional. The proof of Theorem  \ref{t:iterated_mountain_pass} is contained in Section \ref{bangensec}: It is based on a careful analysis of the local behavior of $\mathbb{S}_{\kappa}$ near its critical set and on a homotopy argument due to Bangert \cite{ban80}.

There is a last technical difficulty which is dealt with in Section \ref{minmaxarg}: In order to overcome the lack of the Palais-Smale condition one needs to introduce a family of minimax  functions which depend monotonically on the energy $\kappa$ on some interval $I\subset (0,c_u)$. Moreover, the interval $I$ should remain the same also when considering the $n$-th minimax function. Since Taimanov's local minimizers $\alpha_{\kappa}$ might not depend continuously on $\kappa$, the obvious minimax class consisting of all paths joining $\alpha_{\kappa}^n$ to some curve with negative action might not produce a minimax value which is monotone with $\kappa$. This difficulty is overcome by considering classes of paths starting at an arbitrary point in the closure of all local minimizers of $\mathbb{S}_{\kappa}$ within a certain open subset of $\mathcal{M}$, and proving that in this case the required monotonicity holds on some interval which is independent of $n$.

We conclude this introduction by drawing a parallel between the main result of this paper and the waist theorem due to Bangert, which states that if a Riemannian metric on $S^2$ has a ``waist'', that is, a closed geodesic which is a local minimizer of the length functional, then it has infinitely many closed geodesics \cite[Theorem 4]{ban80}.  This statement has been incorporated by the later proved fact that every Riemannian metric on $S^2$ has infinitely many closed geodesics (see \cite{fra92b, ban93, hin93c}), but its proof remains interesting and is clearly related to our main result. In our theorem the existence of the ``waist''  does not have to be postulated, since it follows from the fact that the energy is below the Ma\~n\'e critical value and that we are dealing with a surface. Moreover, the topology of the surface does not play any role: In general we do not have any information on the homotopy class of Taimanov's local minimizer $\alpha_k$, but the free-period action functional is unbounded from below on every free homotopy class, so mountain pass critical values can always be defined.
Our Theorem \ref{t:iterated_mountain_pass}, which states that high iterates of periodic orbits cannot be mountain passes, is related to Theorem~2 in \cite{ban80}, which states essentially the same thing for closed geodesics on surfaces, although the two proofs are quite different. Actually, Theorem \ref{t:iterated_mountain_pass} (or its variant for fixed period problems), could be used to show that the waist theorem holds also for Finsler metrics on $S^2$ (see Remark \ref{waistrem} below for more details). Finally, unlike  the geodesic case, here we have to deal with the lack of the Palais-Smale condition, an issue which is ultimately responsible for the fact that the we can establish the existence of infinitely many periodic orbits only for almost every energy level.

\begin{footnotesize}
\subsection*{Acknowledgments.}
We are grateful to Marie-Claude Arnaud and Ludovic Rifford for a discussion about Franks' Lemma, and to Marie-Claude Arnaud and Jairo Bochi for suggesting the argument in the proof of Proposition~\ref{p:hyperbolic_perturbation}. A.~A.~is partially supported by the DFG-Grant AB 360/2-1. L. M. is partially supported by CNPq, Brazil. M.~M.~is partially supported by the  ANR projects WKBHJ (ANR-12-BLAN-WKBHJ) and  COSPIN (ANR-13-JS01-0008-01).

\end{footnotesize}

\section{The free-period action functional}\label{s:free_period_action}
\label{morseind}

\subsection{Definitions and basic properties}\label{s:basic_properties}
The results of sections~\ref{s:free_period_action} and~\ref{s:high_iterates_are_not_mountain_passes} hold for a closed manifold $M$ of arbitrary dimension  $d$ and for a general Lagrangian $L:TM \rightarrow \R$ of the form
\[
L(x,v) = \frac{1}{2} |v|_x^2 - \theta_x(v) - V(x),
\]
where $|\cdot|_{\cdot}$ denotes the norm which is induced by a Riemannian metric $g$ on $M$, $\theta$ is a smooth 1-form on $M$, and $V$ is a smooth real-valued function on $M$. The flow on $TM$ which is determined by the corresponding Euler-Lagrange equations preserves the energy function $E: TM \rightarrow \R$,
\[
E(x,v) = \partial_vL(x,v)[v] - L(x,v) = \frac{1}{2}|v|_x^2 + V(x).
\]
The space of closed curves on $M$ of Sobolev class $H^1$ and arbitrary period is identified with the Hilbert manifold
\[
\mathcal{M}:= H^1(\T,M) \times (0,+\infty),
\]
where $\T:= \R/\Z$ and the pair $(x,T)\in H^1(\T,M) \times (0,+\infty)$ corresponds to the $T$-periodic  curve $\gamma(t) = x(t/T)$. We denote the elements of $\mathcal{M}$ indifferently as $(x,T)$ or as $\gamma$. The free-period action functional
\[
\mathbb{S}: \mathcal{M} \rightarrow \R, \qquad \mathbb{S}(\gamma) = \mathbb{S}(x,T) := \int_0^T L(\gamma(t),\dot\gamma(t))\, dt = T \int_{\T} L(x(s),\dot{x}(s)/T)\, ds,
\]
is smooth on $\mathcal{M}$ and its critical points are precisely the periodic orbits $\gamma$ with energy $E(\gamma,\dot{\gamma})=0$. See \cite{con06} and \cite{abb13} for general facts about this functional.

The functional $\mathbb{S}$ is invariant with respect to the continuous action of $\T$ by time translations, which we denote by
\[
\T\times \mathcal{M} \rightarrow \mathcal{M}, \qquad (\tau,(x,T)) \mapsto \tau \cdot (x,T) := (x(\cdot+\tau),T).
\]
In particular, the critical set $\crit(\mathbb{S})$ of the free-period action functional consists of critical orbits $\T\cdot \gamma$.
We shall be interested in non-constant periodic orbits $\gamma=(x,T)$: In this case, $\T\cdot \gamma$ is a smooth embedded  circle in $\mathcal{M}$ (multiply covered by the action of $\T$, if $T$ is not the prime period of $\gamma$).

We endow $\mathcal{M}$ with the product metric between the standard Riemannian metric of $H^1(\T,M)$ and the standard metric of $(0,+\infty)\subset \R$, where the former metric is given by
\[
\langle u,v \rangle_{H^1} := \int_{\T} \bigl(g_x ( \nabla_s u,\nabla_s v) + g_x( u, v) \bigr)\, ds, \qquad \forall u, v \in T_x H^1(\T,M), \ x\in H^1(\T,M).
\] 
This metric induces a distance on $\mathcal{M}$ and a norm $\|\cdot\|$ on elements of $T\mathcal{M}$ and $T^* \mathcal{M}$.

We shall repeatedly use the fact that, when $M$ is compact, the functional $\mathbb{S}$ satisfies the Palais-Smale condition on subsets of $\mathcal{M}$ consisting of pairs $(x,T)$ for which $T$ is bounded and bounded away from 0: Every sequence $\gamma_h=(x_h,T_h)$ in $\mathcal{M}$ such that 
\[
0< \inf_h T_h \leq \sup_h T_h < +\infty,
\]
and
\[
\sup_h |\mathbb{S}(\gamma_h)| < +\infty, \qquad \|d\mathbb{S}(\gamma_h)\| \rightarrow 0,
\]
has a converging subsequence. See \cite[Proposition 3.12]{con06} or \cite[Lemma 5.3] {abb13}. In particular, $\|d\mathbb{S}\|$ is bounded away from zero on bounded closed subsets of $\mathcal{M}$ on which the second component $T$ is bounded away from zero and which contain no critical points.

\subsection{The Morse index and the iteration map}\label{s:Morse_index}
     
We denote by $\mathrm{ind}(\gamma) = \mathrm{ind}(x,T)$  the Morse index of the critical point $\gamma=(x,T)$ of $\mathbb{S}$. The Morse index of $x$ with respect to the fixed-period action $\mathbb{S}|_{H^1(\T,M) \times \{T\}}$ is denoted by $\mathrm{ind}_T(x)$. Clearly
\[
0 \leq \mathrm{ind}(x,T) - \mathrm{ind}_T(x) \leq 1.
\]
The precise relationship between these two indices in the special case of transversally non-degenerate critical points (as defined below) is discussed in \cite[Proposition 2.1]{amp13}.

Let $H:T^*M\to\R$ be the smooth Hamiltonian that is dual to $L$, i.e.
\[
H(x,p)=p(\mathfrak{L}^{-1}(x,p))-L(\mathfrak{L}^{-1}(x,p)),
\]
where $\mathfrak{L}:TM\to T^*M$ is the Legendre transform, that is, the diffeomorphism given by  $\mathfrak{L}(x,v)=(x,\partial_v L(x,v))$. We denote by $X_H$ the Hamiltonian vector field on the cotangent bundle $T^*M$ given by $\omega(X_H,\cdot)= -dH$, where $\omega=dp\wedge dq$ is the canonical symplectic structure of $T^*M$.  The Legendre transform conjugates the Euler-Lagrange flow of $L$ on any energy hypersurface $E^{-1}(\kappa)$ with the Hamiltonian flow $\phi_H^t$ of the vector field $X_H$ on the corresponding hypersurface $H^{-1}(\kappa)$.

Assume that the critical point $(x,T)$ corresponds to a non-constant periodic orbit $\gamma(t)=x(t/T)$. We set $z:=\mathfrak{L}(\gamma(0),\dot\gamma(0))$, and notice that the differential of the Hamiltonian flow $d\phi_H^T(z)$ preserves the coisotropic vector subspace $T_z(H^{-1}(0))$ and satisfies $d\phi_H^T(z)X_H(z)=X_H(z)$. Let $e_1,f_1,...,e_d,f_d$ be a symplectic basis of $T_zT^*M$ such that $f_1=X_H(z)$ and \[\mathrm{span}\{f_1,e_2,f_2,...,e_d,f_d\}=T_z (H^{-1}(0)).\]
In this symplectic basis, we can write $d\phi_H^T(z)$ as an element of $\mathrm{Sp}(2d)$ of the form
\[
d\phi_H^T(z)
=
\left(
\begin{array}{cc|ccc}
1 & 0 & 0 &\cdots & 0 \\
* & 1 & * & \cdots & * \\\hline
* & 0 \\
\vdots & \vdots & & P \\
* & 0
\end{array} \right),
\]
where $P\in\mathrm{Sp}(2d-2)$ is the linearized Poincar\'e map of $\phi_H^T$ at $z$, and the entries marked by $*$ contain some real numbers. Notice that the spectra of $d\phi_H^T(z)$ and $P$ are related by 
\begin{align}\label{e:spectra_of_linearized_flow_and_Poincaré_map}
\sigma(d\phi_H^T(z))=\{1\}\cup\sigma(P).
\end{align} 
The vector $(\dot{x},0)$ belongs to the kernel of $d^2 \mathbb{S}(x,T)$, and there is an isomorphism
\begin{equation}
\label{e:nullspace_of_the_Hessian}
\frac{\ker d^2 \mathbb{S}(x,T)}{\mathrm{span}\{(\dot{x},0)\}} \cong \ker (I-P),
\end{equation}
see \cite[Proposition A.3]{amp13}. The dimension of the  subspace $\ker (I-P)$ is by definition the transverse nullity $\mathrm{null}(x,T)$. When $\mathrm{null}(x,T)=0$ the orbit $\gamma$ is said to be transversally non-degenerate.

The multiplicative semigroup $\N=\{1,2,\dots,\}$ acts smoothly on $\mathcal{M}$ by iteration
\[
\N \times \mathcal{M} \rightarrow \mathcal{M}, \qquad (n,(x,T)) = (n,\gamma) \mapsto \gamma^n = \psi^n(x,T) := (x^n,nT),
\]
where $x^n(s):=x(ns)$. The free-period action functional $\mathbb{S}$ is equivariant with respect to this action on $\mathcal{M}$ and to the multiplicative action of $\N$ on $\R$, i.e.
\[
\mathbb{S}(\gamma^n) = n \,\mathbb{S}(\gamma), \qquad \forall (n,\gamma)\in \N \times \mathcal{M}.
\]
Notice that the iteration map $\psi^n:\mathcal{M}\hookrightarrow\mathcal{M}$ is a smooth embedding and maps the critical set of $\mathbb{S}$ into itself. The mean index of the critical point $\gamma=(x,T)$ is the non-negative real number
\[
\overline{\mathrm{ind}}(\gamma) = \overline{\mathrm{ind}}(x,T):= \lim_{n\rightarrow \infty} \frac{ \mathrm{ind}(\gamma^n)}{n} = \lim_{n\rightarrow \infty} \frac{ \mathrm{ind}_{nT}(x^n)}{n}.
\]
It is well known that $\overline{\mathrm{ind}}(x,T)$ vanishes if and only if the fixed-period Morse index $\mathrm{ind}_{nT}(x^n)$ vanishes for every $n\in \N$. The following result says that  the same fact is true for the free-period Morse index. 

\begin{prop}
\label{p:iterated_Morse_index}
The mean index $\overline{\mathrm{ind}}(\gamma)$ vanishes if and only if $\mathrm{ind}(\gamma^n)=0$ for every $n\in \N$.
\end{prop}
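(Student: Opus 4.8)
The plan is to reduce the statement to the corresponding well-known fact for the fixed-period problem, which is recalled just before the proposition. One implication is immediate: if $\mathrm{ind}(\gamma^n)=0$ for every $n\in\N$, then $\overline{\mathrm{ind}}(\gamma)=\lim_{n\to\infty}\mathrm{ind}(\gamma^n)/n=0$. For the converse, I would argue as follows. By the very definition of the mean index, $\overline{\mathrm{ind}}(\gamma)=\lim_{n\to\infty}\mathrm{ind}_{nT}(x^n)/n$, so the hypothesis $\overline{\mathrm{ind}}(\gamma)=0$ together with the quoted fixed-period statement gives $\mathrm{ind}_{nT}(x^n)=0$ for all $n$; combined with the bound $0\le\mathrm{ind}(x^n,nT)-\mathrm{ind}_{nT}(x^n)\le1$ this yields $\mathrm{ind}(\gamma^n)\in\{0,1\}$ for every $n$. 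Thus the real content of the proposition is that, under the assumption $\mathrm{ind}_{nT}(x^n)=0$ for all $n$, no iterate $\gamma^n$ can have free-period Morse index equal to $1$.

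To establish this I would pass from the free-period functional to an auxiliary functional on the loop space by minimizing over the period. Since $\gamma$ is non-constant and has energy $0$, one has $V(\gamma(t))=-\tfrac12|\dot\gamma(t)|^2$ along the orbit, so $\int_\T V(x^n)\,ds=-\tfrac12\|\dot{x^n}\|_{L^2}^2<0$, and the same strict inequality persists for every $H^1$-curve $y$ near some $x^n$. For such $y$ the function $T'\mapsto\mathbb{S}(y,T')=\tfrac1{2T'}\|\dot y\|_{L^2}^2+\int_\T\theta_y(\dot y)\,ds-T'\int_\T V(y)\,ds$ is strictly convex and proper on $(0,+\infty)$, hence attains its infimum at the unique, nondegenerate point $T'(y)=\bigl(\|\dot y\|_{L^2}^2\,/\,(-2\int_\T V(y))\bigr)^{1/2}$, which depends smoothly on $y$; set $\beta(y):=\mathbb{S}(y,T'(y))$. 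Then $\beta$ is smooth near each $x^n$, invariant under the $\T$-action, and satisfies $\beta(y^k)=k\,\beta(y)$ and $T'(y^k)=k\,T'(y)$; moreover $T'(x^n)=nT$, and the critical points of $\beta$ near $x^n$ are exactly the curves $y$ for which $(y,T'(y))$ is a critical point of $\mathbb{S}$. Because $\partial_{T'}^2\mathbb{S}(y,T')>0$ for non-constant $y$, the $T'$-direction splits off as a positive-definite summand of the Hessian (the standard ``completing the square'' for the second differential, carried out in the Hilbert-manifold setting using the Fredholm structure of $d^2\mathbb{S}$), so that $\mathrm{ind}(\mathbb{S},(y,T'(y)))=\mathrm{ind}(\beta,y)$ and likewise for the nullities. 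In particular $\mathrm{ind}(\gamma^n)=\mathrm{ind}(\beta,x^n)$ for all $n$, whence $\lim_{n\to\infty}\mathrm{ind}(\beta,x^n)/n=\overline{\mathrm{ind}}(\gamma)=0$.

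It then remains to show that, for the functional $\beta$, vanishing of this mean index forces $\mathrm{ind}(\beta,x^n)=0$ for all $n$; this is precisely the analogue for $\beta$ of the fixed-period fact already quoted, and I expect it to be the main obstacle. The way I would close the argument is to observe that $\beta$ is, up to a smooth reparametrization of curves, a geodesic-type action functional of the class to which the quoted statement applies: by the Maupertuis–Jacobi principle, near the orbit — where $-2V>0$ — the critical points of $\beta$, suitably reparametrized, are the closed geodesics of a Finsler-type Lagrangian built from the conformally rescaled metric $(-2V)\,g$ and the same form $\theta$ (this is exactly the functional used in \cite{con06} and \cite{abb13}), and this identification is compatible with iteration and preserves Morse indices and nullities. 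Hence the quoted statement applies to $\beta$ and gives $\mathrm{ind}(\beta,x^n)=0$ for every $n$, so $\mathrm{ind}(\gamma^n)=0$ for every $n$, as desired. The delicate points are therefore (i) the Hessian splitting in the infinite-dimensional setting and (ii) making the last step rigorous — either by carrying out the Maupertuis reduction carefully, including the behaviour at any points where $\dot\gamma$ vanishes, or, alternatively, by developing directly the Bott-type iteration formula for $\beta$ (equivalently for the free-period functional $\mathbb{S}$) and deducing, as in the classical argument, that a vanishing mean index forces all iterated indices to vanish.
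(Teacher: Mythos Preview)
Your reduction to the period-minimised functional $\beta$ and the Hessian-splitting identity $\mathrm{ind}(\gamma^n)=\mathrm{ind}(\beta,x^n)$ are correct, and the strategy is a natural one; but the proof stops exactly where the content lies, and the gap is genuine. The quoted fixed-period fact rests on Bott's iteration formula for second-order Lagrangian systems, whose proof uses the Sturm--Liouville structure of the second variation; your $\beta(y)=\|\dot y\|_{L^2}\sqrt{-2\int_\T V(y)}+\int_\T\theta_y(\dot y)$ is nonlocal and does not a priori carry that structure, so the quoted fact does not apply to it. The Maupertuis route you propose does identify $\beta$, after reparametrisation, with a Finsler-type length functional provided $-2V>0$ in a neighbourhood of the orbit, and in that regime an iteration theory is indeed available; but this hypothesis fails precisely at brake points of $\gamma$ (points where $\dot\gamma=0$), which are allowed in the generality of Section~\ref{morseind}. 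At a brake point the Jacobi metric $(-2V)g$ degenerates and there is no standard iteration theory to invoke, so as written your argument does not cover the degenerate situations it is meant to handle. For the paper's main application (exact magnetic flows, $V\equiv 0$, energy $\kappa>0$) the speed is constant and brake points cannot occur, so your route could likely be completed there; but the proposition is stated and used in full generality.

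The paper proceeds by a completely different mechanism that sidesteps Maupertuis entirely. Given $\mathrm{ind}(\gamma)>0$ and $\mathrm{null}(\gamma)>0$, one perturbs the potential by a $C^2$-small $U$ vanishing to first order along $\gamma$ (so $\gamma$ remains an orbit), chosen via the Hamiltonian Franks' Lemma together with a linear-algebraic fact (Proposition~\ref{p:hyperbolic_perturbation}: unipotent symplectic matrices lie in the closure of the hyperbolic ones) so that the perturbed linearised Poincar\'e map $P_U$ satisfies $1\notin\sigma(P_U)$ while $S^1\cap\sigma(P_U)=S^1\cap\sigma(P)\setminus\{1\}$. Then $\gamma$ is transversally non-degenerate for $\mathbb{S}_U$, lower semicontinuity of the Morse index gives $\mathrm{ind}(\mathbb{S}_U,\gamma)>0$, and the already-established non-degenerate case \cite[Theorem~1.2]{amp13} yields $\Lambda_1(1)>0$ for the perturbed Bott function. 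A short continuity argument for the family $s\mapsto\Lambda_s$ of Bott functions along the homotopy $L-sU$ transfers this positivity to $\Lambda_0(e^{i\alpha})$ for some small $\alpha>0$, and since $\overline{\mathrm{ind}}(\gamma)=\tfrac{1}{2\pi}\int_0^{2\pi}\Lambda_0(e^{i\theta})\,d\theta$ one concludes $\overline{\mathrm{ind}}(\gamma)>0$. This perturbative argument works uniformly, brake points or not, and stays entirely within the local second-variation framework where Bott's theory applies directly.
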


\begin{proof}
If $\mathrm{ind}(\gamma^n)=0$ for every $n$, the mean index $\overline{\mathrm{ind}}(\gamma)$ obviously vanishes. In order to prove the converse, we shall show that if $\ind(\gamma)>0$ then $\overline{\mathrm{ind}}(\gamma)>0$ as well. Since the mean index is homogeneous, i.e.\ $\overline{\mathrm{ind}}(\gamma^n)=n\,\overline{\mathrm{ind}}(\gamma)$, our proposition will readily follow.

Assume that $\ind(\gamma)>0$. The transversally non-degenerate case $\mathrm{null}(\gamma)=0$ was already treated in~\cite[Theorem~2.2]{amp13}. Therefore we are left to deal with the case $\mathrm{null}(\gamma)>0$.

Let $\mathcal{V}\subset C^\infty(M,\R)$ be the vector subspace of those  smooth functions $U:M\to\R$ whose 1-jet vanishes identically along $\gamma$, i.e.\ $U(\gamma(t))=0$ and $dU(\gamma(t))=0$ for all $t\in\R$. For $U\in\mathcal{V}$, we introduce the Lagrangian  \[(x,v)\mapsto L(x,v)-U(x),\] and we denote by $E_U:TM\to\R$ and $\mathbb{S}_U:\mathcal{M}\to\R$ the corresponding energy function and free-period action functional. A straightforward computation shows that $\gamma$ is still a solution of the Euler-Lagrange equations of $L-U$ with energy $E_U(\gamma,\dot\gamma)=0$. In particular $\gamma=(x,T)$ is still a critical point of the free-period action functional $\mathbb{S}_U$. The Hamiltonian dual to $L-U$ is precisely $H+U$, and we denote by $P_U\in\mathrm{Sp}(2d-2)$ the corresponding linearized Poincar\'e map at $z=\mathfrak{L}(\gamma(0),\dot\gamma(0))$. The so-called Hamiltonian Franks' Lemma (see \cite[Th.~1.2 and Sect.~3]{rr12}) implies that, for all open neighborhoods $\mathcal{U}\subset \mathcal{V}$ of the origin in the $C^2$ topology, the image of the map $\mathcal{U}\to\mathrm{Sp}(2d-2)$ given by $U\mapsto P_U$ contains an open neighborhood of $P$.

Consider the splitting $\R^{2d-2}=V\oplus W$, where $V$ is the symplectic subspace invariant by $P$ whose complexification is the generalized eigenspace of the eigenvalue 1,  and $W$ is its symplectic orthogonal (whose complexification is the direct sum of the generalized eigenspaces of the remaining eigenvalues). Using this splitting, we can write $P$ as a block-diagonal matrix of the form
\[
P = \left(
\begin{matrix}
A & 0 \\
0 & B
\end{matrix} \right),
\]
where $A=P|_{V}$ and $B=P|_{W}$. Notice that the spectra of $A$ and $B$ satisfy $\sigma(A)=\{1\}$  and $\sigma(B)\cap\{1\}=\varnothing$. By Proposition~\ref{p:hyperbolic_perturbation} in the Appendix, the matrix $A$ belongs to the closure of the set of symplectic matrices whose spectrum does not intersect the unit circle. Thus, we can find an arbitrarily $C^2$-small smooth potential $U:M\to\R$ such that $P_U$ has the form
\[
P_U = \left(
\begin{matrix}
A' & 0 \\
0 & B
\end{matrix} \right),
\]
and the spectrum  $\sigma(A')$ does not intersect the unit circle. In particular
\begin{align}\label{e:relation_between_spectra_after_perturbation}
 S^1\cap\sigma(P_U)=S^1\cap\sigma(P)\setminus\{1\}.
\end{align}
We recall that the spectra of $d\phi_{H+U}^T(z)$ and $P_U$ are related as in~\eqref{e:spectra_of_linearized_flow_and_Poincaré_map}.

For all $s\in[0,1]$, we denote by $\Lambda_s:S^1\to\N$ Bott's function associated to the linearization along $\gamma=(x,T)$ of the $d$-dimensional second order system which is induced by the perturbed Lagrangian $L-sU$. We refer the reader to Bott's original paper \cite{bot56},  or to \cite[ch.~9]{lon02} and \cite[sect.~2.2]{maz11}, for the definition and main properties of Bott's functions. Here, we just recall that these functions compute in particular the fixed-period Morse indices as $\ind_T(x)=\Lambda_0(1)$, and the mean index as the average
\[
\overline{\ind}(\gamma)=\frac{1}{2\pi}\int_{0}^{2\pi} \Lambda_0(e^{i\theta})\,d\theta.
\]
Each Bott's function $\Lambda_s$ is lower semi-continuous and  is actually  locally  constant outside the intersection $\sigma(d\phi_{H+sU}^T(z))\cap S^1$. Moreover, the function $s\mapsto \Lambda_s(e^{i\theta})$ is constant provided \[e^{i\theta}\not\in\bigcup_{s\in[0,1]}\sigma(d\phi_{H+sU}^T(z)).\]

We choose $\alpha>0$ small enough  such that the spectrum of $d\phi_{H}^T(z)$ does not contain any eigenvalue on the unit circle with argument in the interval $(0,\alpha]$, i.e.
\begin{align*}
\big\{e^{i\theta}\ \big|\ \theta\in(0,\alpha]\big\}\cap\sigma(d\phi_{H}^T(z))=\varnothing.
\end{align*}
This implies that the function $\theta\mapsto\Lambda_0(e^{i\theta})$ is constant on the interval $(0,\alpha]$. By~\eqref{e:relation_between_spectra_after_perturbation} and~\eqref{e:spectra_of_linearized_flow_and_Poincaré_map}, we also have
\begin{gather}
\label{e:eigenvalue_close_to_1} \big\{e^{i\theta}\ \big|\ \theta\in(0,\alpha]\big\}\cap\sigma(d\phi_{H+U}^T(z))=\varnothing.
\end{gather}
We denote by $\ind(\mathbb{S}_{U},\gamma)$ the Morse index of the free-period action functional $\mathbb{S}_{U}$ at $\gamma$. Since our $U$ can be chosen to be arbitrarily $C^2$-small, we can assume that 
\begin{gather}
\label{e:eigenvalue_close_to_2} e^{ i\alpha}\not\in\bigcup_{s\in[0,1]}\sigma(d\phi_{H+sU}^T(z)).
\end{gather}
Moreover, by the lower semi-continuity of the Morse index, we can further assume that 
\[
\ind(\mathbb{S}_{U},\gamma)\geq\ind(\gamma)>0.
\]
Notice that $\gamma$ is transversally non-degenerate for the free-period action functional $\mathbb{S}_{U}$. Therefore, by~\cite[Theorem~2.2]{amp13}, we infer $\Lambda_1(1)>0$. By~\eqref{e:eigenvalue_close_to_1}, the function $\theta\mapsto\Lambda_1(e^{i\theta})$ is constant on the interval $(0,\alpha]$, and since Bott's functions are lower semi-continuous, this constant must be larger than or equal to $\Lambda_1(1)$.  By~\eqref{e:eigenvalue_close_to_2} and the above mentioned continuity property of homotopies of Bott's functions, we have
\[
\Lambda_s(e^{i\alpha})=\Lambda_1(e^{i\alpha})\geq\Lambda_1(1)>0,\qquad\forall s\in[0,1].
\]
This implies that 
\[
\overline{\ind}(\gamma)
=\frac{1}{2\pi} \int_{0}^{2\pi} \Lambda_0(e^{i\theta})\,d\theta
\geq
\frac{1}{2\pi} \int_{0}^\alpha \Lambda_0(e^{i\theta})\,d\theta
=
\frac{\alpha}{2\pi} \Lambda_0(e^{i\alpha})
\geq
\frac{\alpha}{2\pi} \Lambda_1(1)>0. \qedhere
\] 
\end{proof}

We conclude this section with the following lemma, which turns out to be useful when dealing with degenerate critical points. The lemma was first proved by Gromoll and Meyer \cite[Lemma~2]{gm69b} in the context of non-magnetic closed geodesics. We include its short proof here for the reader's convenience.

\begin{lem}\label{l:iterated_nullity}
Let $\gamma=(x,T)$ be a critical point of $\mathbb{S}$ which corresponds to a non-constant periodic orbit.
Then there is a partition $\N=\N_1\cup...\cup\N_k$, integers $n_1\in\N_1$, ..., $n_k\in\N_k$,  and $\nu_1,...,\nu_k\in\{0,...,2d-2\}$ with the following property: $n_j$ divides all the integers in $\N_j$, and $\mathrm{null}(\gamma^n)=\nu_j$ for all $n\in\N_j$. 
\end{lem}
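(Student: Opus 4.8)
The plan is to reduce the statement to the behavior of the transverse nullity $\mathrm{null}(\gamma^n) = \dim\ker(I - P^n)$, where $P\in\mathrm{Sp}(2d-2)$ is the linearized Poincar\'e map of $\gamma$, using the isomorphism~\eqref{e:nullspace_of_the_Hessian} applied to each iterate $\gamma^n$ (whose Poincar\'e map is $P^n$). Thus the claim is purely linear-algebraic: for any $P\in\mathrm{Sp}(2d-2)$, the function $n\mapsto \dim\ker(I-P^n)$ takes finitely many values, and for each value $\nu$ the set $\{n : \dim\ker(I-P^n)=\nu\}$ is closed under taking multiples once we pick a suitable base element. First I would diagonalize the problem over $\C$: writing $P$ in Jordan form, $\ker(I-P^n)$ is spanned by the eigenvectors $v$ with $P v = \lambda v$, $\lambda^n = 1$, that also satisfy the appropriate nilpotency condition — more precisely, a generalized eigenvector of $\lambda$ with $P$-Jordan block of size $\ell$ contributes to $\ker(I-P^n)$ exactly one dimension if $\lambda^n=1$ and $n$ is divisible by the order of $\lambda$ in a way that... actually the cleanest statement is that $\dim\ker(I-P^n)$ depends only on which roots of unity among the eigenvalues of $P$ have order dividing $n$, together with the (fixed) Jordan structure.

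Here are the key steps. (1) Let $\lambda_1,\dots,\lambda_r$ be the distinct eigenvalues of $P$ that are roots of unity, with orders $m_1,\dots,m_r$ (eigenvalues that are not roots of unity never contribute to any $\ker(I-P^n)$, since $\lambda^n=1$ forces $\lambda$ a root of unity). For a subset $S\subseteq\{1,\dots,r\}$, observe that $\dim\ker(I-P^n)$ is determined by the set $S(n):=\{i : m_i \mid n\}$: indeed $\ker(I-P^n)=\bigoplus_{i\in S(n)} \ker(I-P^n)|_{E_i}$ where $E_i$ is the generalized eigenspace of $\lambda_i$, and on $E_i$ (with $m_i\mid n$) one has $\ker(I-P^n)|_{E_i}=\ker(I-P)|_{E_i}$ because $P^n$ restricted to $E_i$ is unipotent with the same Jordan block sizes as $P|_{E_i}$ (raising a unipotent Jordan block to a power coprime to nothing relevant preserves block sizes; here $\lambda_i^n=1$ makes $P^n|_{E_i}$ unipotent and one checks the kernel dimension equals the number of Jordan blocks, independent of $n$). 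Hence $\dim\ker(I-P^n)=\sum_{i\in S(n)}(\text{number of Jordan blocks of }P\text{ for }\lambda_i)=:\nu(S(n))$. (2) Since there are only $2^r$ subsets, $n\mapsto\dim\ker(I-P^n)$ takes at most $2^r\le 2^{2d-2}$ values; let $\nu_1,\dots,\nu_k$ be the distinct values and $\N_j:=\{n : \dim\ker(I-P^n)=\nu_j\}$, a partition of $\N$ into nonempty sets. (3) For each $j$ choose $n_j\in\N_j$ minimizing $|S(n)|$ among $n\in\N_j$; I claim $n_j\mid n$ for every $n\in\N_j$. If not, let $n\in\N_j$ and set $n'=\gcd(n,n_j)$; then $S(n')=S(n)\cap S(n_j)$, and since $n_j\in\N_j$ has the minimal-cardinality $S$, while $S(n')\subseteq S(n_j)$, minimality forces $S(n')=S(n_j)$, i.e.\ $S(n_j)\subseteq S(n)$; but also $\nu(S(n))=\nu_j=\nu(S(n_j))$, and since each index contributes a strictly positive number of Jordan blocks, $S(n_j)\subseteq S(n)$ together with equal $\nu$ gives $S(n)=S(n_j)$, whence $n'=\gcd(n,n_j)$ satisfies $m_i\mid n'$ for all $i\in S(n_j)=S(n)$, forcing (by maximality of the divisibility set) $n_j\mid n$ after replacing $n_j$ by the lcm of the $m_i$, $i\in S(n_j)$ — I would set $n_j$ from the start to be that lcm, which lies in $\N_j$ and divides every $n$ with $S(n_j)\subseteq S(n)$.

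I expect the main technical point to be Step~(1): verifying that $\dim\ker(I-P^n)|_{E_i}$ is independent of $n$ among those $n$ divisible by $m_i$. This is the statement that if $N$ is a nilpotent matrix and $u$ is unipotent commuting appropriately, then $\ker$ of $u-I$ has dimension equal to the number of Jordan blocks — concretely, if $\lambda_i^n=1$ then $P^n|_{E_i}=(\lambda_i^n)(\mathrm{Id}+N)^n$-type expression is unipotent, and $(\mathrm{Id}+N)^n - \mathrm{Id}= nN + \binom{n}{2}N^2+\cdots$ has the same rank as $N$ because its leading term $nN$ dominates (one can argue via the filtration by powers of $N$, or simply note $(\mathrm{Id}+N)^n-\mathrm{Id}=N\cdot(\text{invertible})$ since the cofactor $n\,\mathrm{Id}+\binom n2 N+\cdots$ is invertible, $n\neq 0$). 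Once this is in place the combinatorial bookkeeping of Steps~(2)--(3) is routine. Note the bound $\nu_j\le 2d-2$ is immediate since $\mathrm{null}(\gamma^n)=\dim\ker(I-P^n)\le 2d-2$, and $\nu_j\in\{0,\dots,2d-2\}$ as claimed; the case where $P$ has no root-of-unity eigenvalues gives $k=1$, $\N_1=\N$, $\nu_1=0$, $n_1=1$.
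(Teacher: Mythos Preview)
Your reduction to the linear-algebraic statement and the computation in Step~(1) are correct and essentially equivalent to the paper's identity
\[
\dim\ker(P^n-I)=\sum_{\lambda^n=1}\dim_{\C}\ker_{\C}(P-\lambda I),
\]
since the ``number of Jordan blocks for $\lambda_i$'' is precisely $\dim_{\C}\ker_{\C}(P-\lambda_i I)$. (A small slip: you wrote $\ker(I-P^n)|_{E_i}=\ker(I-P)|_{E_i}$, but the right-hand side should be $\ker(P-\lambda_i I)|_{E_i}$; your subsequent discussion shows you mean this.)

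The genuine gap is in Steps~(2)--(3): you partition $\N$ by the \emph{distinct values} of $\nu$, but the statement does not require the $\nu_j$ to be distinct, and in fact your partition is too coarse for the divisibility claim to hold. Concretely, take $d=3$ and $P\in\mathrm{Sp}(4)$ the symplectic direct sum of $-I\in\mathrm{Sp}(2)$ and the rotation by $2\pi/3$ in $\mathrm{Sp}(2)$; the root-of-unity eigenvalues have orders $m_1=2$ and $m_2=3$, each contributing (real) geometric multiplicity $2$. Then $\nu(S(2))=\nu(S(3))=2$, so $2,3\in\N_j$ for the same $j$; but no element of this $\N_j$ divides both $2$ and $3$. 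Your argument breaks precisely where you invoke ``minimality forces $S(n')=S(n_j)$'': minimality was taken over $\N_j$, while $n'=\gcd(n,n_j)$ need not lie in $\N_j$ (here $n'=1$, with $\nu=0$). The attempted patch via $n_j:=\mathrm{lcm}\{m_i:i\in S(n_j)\}$ only yields $n_j\mid n$ for those $n$ with $S(n)\supseteq S(n_j)$, which again fails for $n=3$ against $n_j=2$.

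The fix is exactly what the paper does: partition $\N$ by the \emph{set} $S(n)$ (equivalently by $\sigma_n(P)=\{\lambda\in\sigma(P):\lambda^n=1\}$), not by the value $\nu(S(n))$. For a class $\N_j=\{n:S(n)=S_j\}$ one takes $n_j=\mathrm{lcm}\{m_i:i\in S_j\}$; then $S(n_j)=S_j$ (so $n_j\in\N_j$) and $n_j\mid n$ for every $n\in\N_j$ since each $m_i$ with $i\in S_j$ divides $n$. The nullity is constant on each $\N_j$ by your Step~(1), with possibly repeated values across different $j$.
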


\begin{proof}
Since $P$ is an automorphism, the geometric multiplicity of the eigenvalue 1 varies under iteration according to
\begin{align}\label{e:Bott}
\dim\ker(P^n-\mathrm{id})=\sum_{\lambda\in\sqrt[n]{1}}\dim_{\C}\ker_{\C}(P-\lambda\,\mathrm{id}).
\end{align}
Let $\sigma(P)$ be the set of eigenvalues of $P$, and, for all $n\in\N$, we set
\[
\sigma_n(P)=\{\lambda\in\sigma(P)\ |\ \lambda^n=1\}.
\]
We define an equivalence relation on $\N$, by saying that $m\sim n$ when $\sigma_m(P)=\sigma_n(P)$. We denote by $\N_1,...,\N_r$ the equivalence classes, and by $n_j$ the minimum of $\N_j$. If 
\[\sigma_{n_j}(P)=\{\exp(i2\pi p_1/q_1),...,\exp(i2\pi p_n/q_n)\},\] 
where $p_h$ and $q_h$ are relatively prime, the integers in $\N_j$ are common multiples of $q_1,...,q_n$, and $n_j$ is the least common multiple of $q_1,...,q_n$. By~\eqref{e:nullspace_of_the_Hessian} and~\eqref{e:Bott} we have
\[
\mathrm{null}(\gamma^n)=\sum_{\lambda\in\sigma_n(P)}\dim_{\C}\ker_{\C}(P-\lambda\,\mathrm{id}),
\]
which is the same integer for all $n$ belonging to the same set $\N_j$.
\end{proof}

\section{High iterates of periodic orbits are not mountain passes}\label{s:high_iterates_are_not_mountain_passes}
\label{bangensec}

\subsection{A tubular neighborhood lemma for critical orbits}

In this subsection, we will denote by $\nabla\mathbb{S}$ the gradient vector field of the free-period action functional $\mathbb{S}$ with respect to the following Riemannian metric on $\mathcal{M}$
\begin{equation}
\label{e:Riemannian_metric}
\begin{split}
\langle ( u,R),(v,S) \rangle_{(x,T)}
&:= \frac{RS}{T} + T \int_0^1  \Bigl( g_x(u, v)  + \frac{1}{T^2} \, g_x (\nabla_s u, \nabla_s v)  \Bigr)\, 
d s \\ &= \frac{RS}{T} + \int_0^T \bigl( g_{\gamma}(\xi,\eta) + g_{\gamma} (\nabla_t \xi, \nabla_t \eta) \bigr)\, dt,
\end{split} 
\end{equation}
where $\xi(t)=u(t/T)$ and $\eta(t)=v(t/T)$. This metric is equivalent to the standard one of $\mathcal{M}$ (see Section~\ref{s:basic_properties}) on subsets consisting of pairs $(x,T)$ for which $T$ is bounded and bounded away from $0$. In particular, in these subsets the free-period action functional $\mathbb{S}$ satisfies the Palais-Smale condition with respect to the metric (\ref{e:Riemannian_metric}). 

The advantage of this metric is that it makes the iteration map $\psi^n$ conformal with constant conformal factor $n$. Indeed, using the identity $d\psi^n(x,T)[(u,R)] = (u^n,nR)$, one checks easily that
\[
\langle d\psi^n(x,T) [(u,R)] , d\psi^n(x,T) [(v,S)] \rangle_{(x^n,nT)} = n \, \langle ( u,R),(v,S) \rangle_{(x,T)},
\]
for every $(u,R)$ and $(v,S)$ in the tangent space of $\mathcal{M}$ at $(x,T)$. In particular, if the superscript $*$ denotes the adjoint operation with respect to the metric (\ref{e:Riemannian_metric}), we have that 
\begin{equation}
\label{proiettore}
d\psi^n(x,T) \, d\psi^n(x,T)^* = n \, \Pi,
\end{equation}
where $\Pi$ is the orthogonal projector onto the image of $d\psi^n(x,T)$, that is, the tangent space of the submanifold $\psi^n(\mathcal{M})$ at $(x^n,nT)$. Furthermore, it is easy to see that the orthogonal space of $T_{(x^n,nT)} \psi^n(\mathcal{M})$ is
\begin{equation}
\label{ortho}
\bigl( T_{(x^n,nT)} \psi^n(\mathcal{M}) \bigr)^{\perp} = \left\{ (u,R)\in T_{(x^n,nT)} \mathcal{M} \, \bigg| \, R=0,\ \sum_{j=0}^{n-1} u\big( \tfrac{s+j}{n} \big) = 0 \; \forall s\in \T \right\}.
\end{equation}
The above facts allow us to prove the following: 

\begin{lem}
If $\nabla$ denotes the gradient with respect to the metric (\ref{e:Riemannian_metric}), then
\begin{equation}
\label{e:psim_commutes_with_nablaS}
 \nabla \mathbb{S}(\gamma^n) = d \psi^n (\gamma) \nabla \mathbb{S}(\gamma),\qquad\forall \gamma\in\mathcal{M}.
\end{equation}
\end{lem}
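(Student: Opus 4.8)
The plan is to exploit the equivariance of $\mathbb{S}$ under iteration together with the conformality of $\psi^n$ with respect to the metric~\eqref{e:Riemannian_metric}. First I would differentiate the identity $\mathbb{S}(\psi^n(\gamma)) = n\,\mathbb{S}(\gamma)$, which gives, for every $\gamma\in\mathcal{M}$ and every tangent vector $(u,R)\in T_\gamma\mathcal{M}$,
\[
d\mathbb{S}(\gamma^n)\bigl[d\psi^n(\gamma)[(u,R)]\bigr] = n\, d\mathbb{S}(\gamma)[(u,R)].
\]
By definition of the gradient with respect to~\eqref{e:Riemannian_metric}, the left-hand side equals $\langle \nabla\mathbb{S}(\gamma^n), d\psi^n(\gamma)[(u,R)]\rangle_{\gamma^n}$, which by the defining property of the adjoint equals $\langle d\psi^n(\gamma)^*\nabla\mathbb{S}(\gamma^n), (u,R)\rangle_{\gamma}$. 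The right-hand side is $n\,\langle \nabla\mathbb{S}(\gamma), (u,R)\rangle_\gamma$. Since $(u,R)$ is arbitrary, this yields the identity
\[
d\psi^n(\gamma)^* \,\nabla\mathbb{S}(\gamma^n) = n\,\nabla\mathbb{S}(\gamma)
\]
in $T_\gamma\mathcal{M}$.

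Next I would apply $d\psi^n(\gamma)$ to both sides and use the relation~\eqref{proiettore}, namely $d\psi^n(\gamma)\,d\psi^n(\gamma)^* = n\,\Pi$, where $\Pi$ is the orthogonal projector onto $T_{\gamma^n}\psi^n(\mathcal{M})$. The left side becomes $n\,\Pi\,\nabla\mathbb{S}(\gamma^n)$ and the right side becomes $n\,d\psi^n(\gamma)\nabla\mathbb{S}(\gamma)$; cancelling $n$ gives
\[
\Pi\,\nabla\mathbb{S}(\gamma^n) = d\psi^n(\gamma)\nabla\mathbb{S}(\gamma).
\]
It therefore remains to show that $\nabla\mathbb{S}(\gamma^n)$ already lies in $T_{\gamma^n}\psi^n(\mathcal{M})$, i.e.\ that $\Pi\,\nabla\mathbb{S}(\gamma^n) = \nabla\mathbb{S}(\gamma^n)$, equivalently that $\nabla\mathbb{S}(\gamma^n)$ is orthogonal to the space described in~\eqref{ortho}. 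This is the one genuinely non-formal point: I expect it to follow from the $1/n$-periodicity structure of $\mathbb{S}$ on iterated curves. Concretely, for $(u,R)$ in the orthogonal complement~\eqref{ortho} one has $R=0$ and $\sum_{j=0}^{n-1}u\bigl(\tfrac{s+j}{n}\bigr)=0$; pairing $d\mathbb{S}(\gamma^n)$ with such a vector and writing the action integral over $[0,nT]$ as a sum of $n$ integrals over intervals of length $T$, the integrand (which depends on $\gamma^n=(x^n,nT)$ only through the $T$-periodic curve $\gamma$ and its derivative) is the same on each piece, so the pairing reduces to $d\mathbb{S}(\gamma^n)$ tested against the averaged vector field, which vanishes. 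Hence $d\mathbb{S}(\gamma^n)$ annihilates $(T_{\gamma^n}\psi^n(\mathcal{M}))^\perp$, so $\nabla\mathbb{S}(\gamma^n)\in T_{\gamma^n}\psi^n(\mathcal{M})$, and the two displayed identities combine to give~\eqref{e:psim_commutes_with_nablaS}.

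The main obstacle, then, is the verification that the gradient of $\mathbb{S}$ at an iterated curve is tangent to $\psi^n(\mathcal{M})$; everything else is a formal manipulation with the adjoint and the conformal factor. An alternative, perhaps cleaner, route to the same conclusion is to observe directly from the first-variation formula that $d\mathbb{S}(\gamma^n)[(u,R)]$ depends on $(u,R)$ only through the components of $u$ and $R$ "in the iterated directions", which is precisely the statement that $d\mathbb{S}(\gamma^n)$ vanishes on~\eqref{ortho}; then~\eqref{proiettore} upgrades the already-established relation $d\psi^n(\gamma)^*\nabla\mathbb{S}(\gamma^n)=n\,\nabla\mathbb{S}(\gamma)$ to~\eqref{e:psim_commutes_with_nablaS} without separately invoking~\eqref{ortho}. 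I would present whichever of these is shortest in the write-up, but the underlying mechanism is the same: equivariance of $\mathbb{S}$ plus conformality of $\psi^n$.
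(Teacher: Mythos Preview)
Your proposal is correct and follows essentially the same route as the paper's proof: differentiate the equivariance identity to obtain $d\psi^n(\gamma)^*\nabla\mathbb{S}(\gamma^n)=n\,\nabla\mathbb{S}(\gamma)$, apply $d\psi^n(\gamma)$ and use~\eqref{proiettore} to get the tangential part, and then verify $(I-\Pi)\nabla\mathbb{S}(\gamma^n)=0$ by pairing $d\mathbb{S}(\gamma^n)$ with a vector $(u,0)$ satisfying the averaging condition in~\eqref{ortho} and using the $T$-periodicity of $\partial_xL(\gamma,\dot\gamma)$ and $\partial_vL(\gamma,\dot\gamma)$. The paper presents these two pieces in the opposite order but the argument is identical.
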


\begin{proof}
By applying the projector $\Pi$ onto the tangent space of $\psi^n(\mathcal{M})$ at $\gamma^n=\psi^n(\gamma)$ and its complementary projector $I-\Pi$, the identity (\ref{e:psim_commutes_with_nablaS}) is equivalent to the two identities
\begin{gather}
\label{eq1}
\Pi  \, \nabla \mathbb{S}(\gamma^n) = d\psi^n(\gamma) \nabla \mathbb{S} (\gamma), \\
\label{eq2}
(I-\Pi)  \nabla \mathbb{S}(\gamma^n) = 0.
\end{gather}
By differentiating the identity $\mathbb{S}(\psi^n(\gamma)) = n \, \mathbb{S}(\gamma)$ we find the formula
\[
d\psi^n(\gamma)^* \nabla \mathbb{S} (\psi^n(\gamma)) = n \nabla  \mathbb{S}(\gamma).
\]
If we apply $d\psi^n(\gamma)$ to both sides and we use (\ref{proiettore}), we see that (\ref{eq1}) holds. The identity (\ref{eq2}) says that the differential of $\mathbb{S}$ at $\gamma^n$ vanishes on the orthogonal space of $T_{\gamma^n} \psi^n(\mathcal{M})$. By the identity (\ref{ortho}) an element of this orthogonal space has the form $(u,0)$, where
\[
\sum_{j=0}^{n-1} u \big(  \tfrac{s+j}{n} \big) = 0, \qquad \forall s\in \T.
\]
By setting $\xi(t) := u(t/(nT))$ we have
\begin{equation}
\label{vanish}
\sum_{j=0}^{n-1} \xi(t+jT) = 0, \qquad \forall t\in \R.
\end{equation}
Therefore,
\[
d\mathbb{S}(\gamma^n)[(u,0)] = \int_0^{nT} \Bigl( \partial_x L(\gamma,\dot\gamma)[\xi] + \partial_v L(\gamma,\dot\gamma)[\nabla_t \xi] \Bigr)\, dt
\]
vanishes, because the functions $t\mapsto  \partial_x L(\gamma(t),\dot\gamma(t))$ and $t\mapsto  \partial_v L(\gamma(t),\dot\gamma(t))$ are $T$-periodic while $\xi$ satisfies (\ref{vanish}).
\end{proof}

The following tubular neighborhood lemma, which is reminiscent of some arguments in~\cite[Section~3]{gm69b}, will be useful later on, in the proof of Theorem~\ref{t:iterated_mountain_pass}.

\begin{lem}\label{l:Gromoll-Meyer}
Let $\gamma$ be a critical point of the free-period action function $\mathbb{S}$ such that, for some  $n\in\N$, we have $\mathrm{ind}(\gamma)=\mathrm{ind}(\gamma^n)$ and $\mathrm{null}(\gamma)=\mathrm{null}(\gamma^n)$. Then, for every sufficiently small open neighborhood $\mathcal{U}$ of $\T\cdot \gamma$ there is an open neighborhood $\mathcal{V}$ of the submanifold $\psi^n(\mathcal{U})$ and a smooth map
\[
r :[0,1] \times \mathcal{V} \rightarrow \mathcal{V}, \quad (t,\beta) \mapsto r_t(\beta),
\]
such that:
\begin{enumerate}
\item $r_0=\mathrm{id}$;
\item $r_t|_{\psi^n(\mathcal{U})}= \mathrm{id}$ for every $t\in [0,1]$;
\item $r_1(\mathcal{V}) = \psi^n(\mathcal{U})$;
\item $\tfrac{d }{d  t} \mathbb{S}(r_t(\beta))< 0$ for all $\beta \in \mathcal{V}\setminus \psi^n(\mathcal{U})$.
\end{enumerate}
\end{lem}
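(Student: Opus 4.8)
The plan is to build the deformation $r_t$ from a local model for $\mathbb{S}$ near the critical orbit $\T\cdot\gamma^n$, obtained via the Gromoll--Meyer generalized Morse lemma, and then to transport the obvious ``retraction onto the critical submanifold'' from $\gamma^n$ back to $\gamma$ using the conformality of $\psi^n$ established in the preceding lemmas. First I would fix a small tubular neighborhood of the critical circle $\T\cdot\gamma$ inside the fixed-energy level, splitting $T_\gamma\mathcal{M}$ equivariantly (under the $\T$-action) into the tangent line to the orbit, a ``null'' subspace $N$ of dimension $\mathrm{null}(\gamma)$ on which the Hessian degenerates, a positive-definite subspace, and a negative-definite subspace of dimension $\mathrm{ind}(\gamma)$. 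Applying the equivariant Gromoll--Meyer splitting lemma gives coordinates $(\tau,n_0,p,q)\in\T\times N\times E^+\times E^-$ in which, near $\T\cdot\gamma$,
\[
\mathbb{S}(\tau,n_0,p,q) = c + \tfrac12\|p\|^2 - \tfrac12\|q\|^2 + f(n_0),
\]
with $f$ a smooth function having a critical point at $0$ whose Hessian vanishes (here $c=\mathbb{S}(\gamma)$). The key point is that the hypotheses $\mathrm{ind}(\gamma)=\mathrm{ind}(\gamma^n)$ and $\mathrm{null}(\gamma)=\mathrm{null}(\gamma^n)$ guarantee, via the isomorphism~\eqref{e:nullspace_of_the_Hessian} and the iteration behavior of Bott's indices (the structure in Lemma~\ref{l:iterated_nullity}), that the corresponding decomposition of $T_{\gamma^n}\mathcal{M}$ at the iterated orbit has negative, null, and positive subspaces of the same dimensions, and in fact that the iteration map $\psi^n$ carries the splitting at $\gamma$ onto the ``diagonal'' part of the splitting at $\gamma^n$ — that is, $d\psi^n(\gamma)$ maps $E^\pm$ and $N$ at $\gamma$ isomorphically onto $E^\pm$ and $N$ at $\gamma^n$, using that no new degeneracies or negative directions appear under the $n$-th iteration. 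One then has a Gromoll--Meyer chart at $\gamma^n$ of the same form, and $\psi^n(\mathcal{U})$ corresponds, in this chart, to the ``thin'' subset where the $E^\pm$ and $N$ coordinates that are orthogonal to $d\psi^n(\mathcal{M})$ vanish; concretely, using~\eqref{ortho}, $\psi^n(\mathcal{U})$ is cut out by the vanishing of the mean-zero components.

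Next I would define $r_t$ in these coordinates. On a neighborhood $\mathcal{V}$ of $\psi^n(\mathcal{U})$, write a point of $\mathcal{V}$ as $\psi^n(\beta_0)$ perturbed by a vector $w$ in the orthogonal complement $(T\psi^n(\mathcal{M}))^\perp$; because the Hessian of $\mathbb{S}$ at $\gamma^n$ is negative-definite on the negative part of $w$, positive-definite on the positive part, and $f$ is $C^0$-small there, one checks that the linear homotopy $w\mapsto (1-t)w$ (holding $\beta_0$ fixed, and shrinking $\mathcal{V}$ if necessary so as to stay inside the Gromoll--Meyer chart) strictly decreases $\mathbb{S}$ along any ray with $w\neq 0$ \emph{except} possibly along the positive directions. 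To force the decrease also in the positive directions I would instead retract $w$ to zero \emph{negatively}, i.e.\ use the flow that kills the positive component with a time-reparametrized gradient-type flow and the negative component linearly; more cleanly, one can arrange the chart so that $\mathbb{S}$ restricted to the fibers normal to $\psi^n(\mathcal{M})$ is \emph{strictly concave} in a suitable sense — this is where I expect the real work to lie. The cleanest route is: after the splitting, replace the positive normal coordinate $p_\perp$ by noticing that along $\psi^n(\mathcal{M})$ the function has no positive normal directions at all if $\mathrm{ind}$ and $\mathrm{null}$ are preserved — which is exactly what the index hypotheses buy us — so that the normal Hessian at $\gamma^n$ is negative semi-definite on $(T\psi^n(\mathcal{M}))^\perp$, with kernel the normal null directions; on the normal null directions the germ $f$ can be handled by the fact (from Proposition~\ref{p:iterated_Morse_index} and the structure of Bott's functions, or directly from Gromoll--Meyer) that iterating does not create new local-minimum behavior transverse to $\psi^n(\mathcal{M})$, so $f$ restricted to the normal null part is locally maximized along $\psi^n(\mathcal{M})$. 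Granting this, the straight-line homotopy $r_t(\beta_0,w)=(\beta_0,(1-t)w)$ does the job: $r_0=\mathrm{id}$, $r_t$ fixes $\psi^n(\mathcal{U})$ (where $w=0$), $r_1$ has image exactly $\psi^n(\mathcal{U})$, and $\tfrac{d}{dt}\mathbb{S}(r_t(\beta))<0$ for $w\neq0$ by the strict concavity in $w$.

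Finally I would check smoothness and the $\T$-equivariance needed for everything to patch over the whole circle $\T\cdot\gamma$ rather than just one point: the Gromoll--Meyer construction can be carried out equivariantly because $\mathbb{S}$ is $\T$-invariant and the $\T$-action on $\T\cdot\gamma$ is free (or finite-to-one, covering the prime orbit), and one chooses $\mathcal{U}$ inside a slice, then thickens $\T$-invariantly. The main obstacle, as flagged above, is establishing that the normal Hessian of $\mathbb{S}$ at $\gamma^n$ along $(T\psi^n(\mathcal{M}))^\perp$ is negative semi-definite with the ``null'' part behaving like a local maximum transverse to $\psi^n(\mathcal{M})$ — equivalently, that the hypotheses $\mathrm{ind}(\gamma)=\mathrm{ind}(\gamma^n)$, $\mathrm{null}(\gamma)=\mathrm{null}(\gamma^n)$ really mean that \emph{all} the ``new'' directions created by passing from $\gamma$ to $\gamma^n$ are non-positive. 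This is a Bott-index bookkeeping argument combined with the explicit description~\eqref{ortho} of the normal space, and it is the technical heart of the lemma; once it is in place the deformation itself is the elementary linear retraction in Gromoll--Meyer coordinates.
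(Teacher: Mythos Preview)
Your plan has the central sign backwards. The hypotheses $\mathrm{ind}(\gamma)=\mathrm{ind}(\gamma^n)$ and $\mathrm{null}(\gamma)=\mathrm{null}(\gamma^n)$ mean that \emph{no new} negative or null directions appear when one passes from $\gamma$ to $\gamma^n$; hence every direction in the normal space $\bigl(T_{\gamma^n}\psi^n(\mathcal{M})\bigr)^\perp$ is a \emph{positive} direction for $d^2\mathbb{S}(\gamma^n)$, not a non-positive one. Concretely, differentiating the identity $\nabla\mathbb{S}\circ\psi^n=d\psi^n\circ\nabla\mathbb{S}$ of~\eqref{e:psim_commutes_with_nablaS} at the critical point $\gamma$ shows that the Hessian operator at $\gamma^n$ preserves $V:=T_{\gamma^n}\psi^n(\mathcal{M})$ and restricts there to a conjugate of the Hessian operator at $\gamma$. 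Since this restriction already carries all the negative and all the zero eigenvalues of $d^2\mathbb{S}(\gamma^n)$ (by the index and nullity equalities), the restriction to $V^\perp$ has only positive spectrum, bounded below by some $\delta>0$ because $0$ is isolated. Thus $\psi^n(\mathcal{U})$ is a \emph{fiberwise strict local minimum} of $\mathbb{S}$, not a maximum; and recall that moving toward a local minimum \emph{decreases} the function --- your claim that the linear retraction decreases $\mathbb{S}$ ``except possibly along the positive directions'' is exactly inverted.

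Once the sign is corrected, the machinery you invoke becomes unnecessary and the argument collapses to something much shorter, which is what the paper does. No Gromoll--Meyer splitting and no analysis of a degenerate germ $f$ are needed: positive definiteness of the normal Hessian at $\gamma^n$ persists along $\psi^n(\T\cdot\gamma)$ and, by continuity, along all of $\psi^n(\mathcal{U})$ for $\mathcal{U}$ small. Combined with~\eqref{e:psim_commutes_with_nablaS}, which says that $\nabla\mathbb{S}$ is everywhere tangent to $\psi^n(\mathcal{M})$, every normal fiber through $\beta\in\psi^n(\mathcal{U})$ has a fiberwise critical point at $\beta$ with positive-definite fiberwise Hessian --- i.e.\ a strict local minimum. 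The radial retraction $r_t(\exp_\beta\eta)=\exp_\beta((1-t)\eta)$ along normal geodesics therefore gives (i)--(iv) directly on a small enough tubular neighborhood $\mathcal{V}$.
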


\begin{proof}
 Let $\mathcal{U}$ be an open neighborhood of the critical orbit $\T\cdot \gamma$ (which may consist of an embedded circle or just the critical point alone if $\gamma$ is a stationary curve). The image of this neighborhood under the iteration map $\psi^n$ has a normal bundle $ N (\psi^n(\mathcal{U}))$. By our assumptions on the Morse index and nullity, and since $0$ is an isolated point in the spectrum of $d ^2\mathbb{S}(\gamma^n)$, there exists a constant $\delta>0$ such that
\[
d ^2\mathbb{S}(\gamma^n)[\xi,\xi]\geq\delta \langle \xi,\xi\rangle_{\gamma^n} ,\qquad\forall \xi\in N_{\gamma^n}(\psi^n(\mathcal{U})).
\]
Since the free-period action functional $\mathbb{S}$ is smooth, up to shrinking the neighborhood $\mathcal{U}$ and choosing a smaller constant $\delta>0$, we can assume that
\begin{align}\label{e:pos_def_along_fibers}
d ^2 \mathbb{S}(\beta)[\eta,\eta]\geq\delta\langle \eta,\eta \rangle_{\beta},\qquad\forall \beta \in \psi^n(\mathcal{U}),\ \eta \in N_{\beta }(\psi^n(\mathcal{U})).
\end{align}
We now consider a sufficiently small neighborhood of the  0-section and apply to it the exponential map of the metric~\eqref{e:Riemannian_metric}. We obtain a tubular neighborhood $\mathcal{V}$ of $\psi^n(\mathcal{U})$ with associated deformation retraction $r_t:\mathcal{V}\to \mathcal{V}$ given by 
\[
r_t\circ\exp_\beta (\eta)=\exp_\beta ((1-t)\eta),\qquad\forall \beta \in \psi^n(\mathcal{U}),\ \eta \in N_{\beta }(\psi^n(\mathcal{U})),
\]
which satisfies (i), (ii) and (iii). Notice that, given $\beta \in \psi^n(\mathcal{U})$, the tangent space to the fiber of $r_1$ at $\beta$ is $ T_\beta (r_1^{-1}(\beta))= N_\beta (\psi^n(\mathcal{U}))$. By~\eqref{e:psim_commutes_with_nablaS}, the restriction of the free-period action functional $\mathbb{S}$ to each fiber $r_1^{-1}(\beta)$ has a critical point at $\beta $. Moreover, by~\eqref{e:pos_def_along_fibers}, such a restriction has a positive definite Hessian. In particular it is a convex function near its local minimum $\beta $, and up to shrinking the tubular neighborhood $\mathcal{V}$ we have $\tfrac{d }{d  t} \mathbb{S}(r_t(\beta))< 0$ for all $\beta \in \mathcal{V}\setminus \psi^n(\mathcal{U})$. Therefore, also (iv) holds.
\end{proof}

\subsection{Resolution of degenerate critical circles}

Let $\T\cdot \gamma$ be an isolated critical circle of the free-period action functional $\mathbb{S}$, and set
\begin{align*}
\iota&:=\mathrm{ind}(\gamma),\\
\nu&:=\mathrm{null}(\gamma)+1=\dim\ker d^2\mathbb{S}(\gamma).
\end{align*}
We denote by $\mathcal{Z}\to \T\cdot\gamma$ the vector bundle whose fiber over any $t\cdot\gamma$ is equal to the intersection of the normal bundle of the critical circle $\T\cdot\gamma\subset\mathcal{M}$ with the kernel of the Hessian of $\mathbb{S}$, i.e.
\[
\mathcal{Z}_{t\cdot\gamma}
=
N_{t\cdot\gamma}(\T\cdot\gamma)\cap\ker d^2\mathbb{S}(t\cdot\gamma).
\]
The rank of  $\mathcal{Z}$ is finite, and it is actually equal to the nullity $\nu-1$. We denote by $\mathcal{Q}$ the orthogonal complement of the subbundle $\mathcal{Z}\subset N(\T\cdot\gamma)$ with respect to the Riemannian metric of $\mathcal{M}$ (see Section~\ref{s:basic_properties}). By the very definition of $\mathcal{Q}$, and since 0 is an isolated point in the spectrum of $d^2\mathbb{S}(t\cdot\gamma)$, there exists a constant $\delta>0$ such that 
\begin{align}\label{e:non_degenerate_Hessian}
\|d^2\mathbb{S}(t\cdot\gamma)[\xi,\cdot]\|\geq\delta\|\xi\|,\qquad
\forall t\in\T,\ \xi\in\mathcal{Q}_{t\cdot\gamma},
\end{align}
where the norm on the left-hand side of the above inequality is the dual of the Riemannian one on $\mathcal{M}$, and $\mathcal{Q}_{t\cdot\gamma}\subset\mathcal{Q}$ is the fiber above $t\cdot\gamma$. Since the orthogonal group of the infinite-dimensional separable Hilbert space $\mathbb{E}$ is connected, the vector bundle $\mathcal{Q}$ is trivial. Therefore we can make the identification 
\[ N (\T\cdot \gamma)\cong \mathcal{Z}\times\mathbb{E}.\] 
We will denote the coordinates in $\mathcal{Z}\times\mathbb{E}$ as $(t,z,y)$, where $(t,z)\in\mathcal{Z}$ and  $y\in\mathbb{E}$.

Let $\mathcal{U}_R\subset\mathcal{Z}$ denote the open neighborhood of radius $R$ of the 0-section, and $\mathcal{B}_R\subset\mathbb{E}$ be the open balls of radius $R$ centered at the origin. For a sufficiently small  $R>0$, the exponential map of the normal bundle identifies  $\overline{\mathcal{U}}_R \times\overline{\mathcal{B}}_R$ with a neighborhood of $\T\cdot \gamma$ that does not contain other critical points of $\mathbb{S}$. Thus, from now on we can see the free-period action functional as being of the form 
\[
\mathbb{S}:\overline{\mathcal{U}}_R \times\overline{\mathcal{B}}_R\to\R,
\] 
with $\crit(\mathbb{S})=\{0\mbox{-section}\}\times\{0\}$. We equip $\overline{\mathcal{U}}_R$ with the Riemannian metric pulled-back from the one of $\mathcal{M}$ by means of the exponential map and equip $\overline{\mathcal{U}}_R \times\overline{\mathcal{B}}_R$ with the product Riemannian metric induced by the one on $\overline{\mathcal{U}}_R$ and the flat Hilbert metric on $\mathbb{E}$. The standard Riemannian metric on $\mathcal{M}$ is uniformly equivalent on $\overline{\mathcal{U}}_R \times\overline{\mathcal{B}}_R$ to the product one. Therefore, since $\mathbb{S}$ satisfies the Palais-Smale condition inside  $\overline{\mathcal{U}}_R \times\overline{\mathcal{B}}_R$ with respect to the Riemannian metric of $\mathcal{M}$, it  does it with respect to the product metric as well. 

We shall show that we can resolve the degeneracy of the isolated critical circle with a small perturbation of the function supported on any neighborhood of the circle. The following result is a version of Marino and Prodi's perturbation lemma from \cite{mp75}, the difference being that we start from an isolated critical circle rather than an isolated critical point.

\begin{lem}\label{l:Marino-Prodi}
For any  neighborhood $\mathcal{U}$ of the critical circle $\{0\mbox{-section}\}\times\{0\}$ there exists a smooth function $\mathbb{S}':\overline{\mathcal{U}}_R \times\overline{\mathcal{B}}_R\to\R$ that satisfies the Palais-Smale condition, possesses only finitely many critical points, all of which are non-degenerate with Morse index larger than or equal to the original one $\iota$, and such that the difference $\mathbb{S}'-\mathbb{S}$ is supported in $\mathcal{U}$ and arbitrarily $C^2$-small.
\end{lem}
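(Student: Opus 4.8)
The plan is to adapt the classical Marino--Prodi perturbation argument from an isolated critical point to an isolated critical circle, exploiting the splitting $N(\T\cdot\gamma)\cong\mathcal{Z}\times\mathbb{E}$ and the strong convexity estimate~\eqref{e:non_degenerate_Hessian} of $\mathbb{S}$ along $\mathbb{E}$. First, I would invoke an implicit function theorem / Lyapunov--Schmidt reduction along the $\mathbb{E}$-directions: because $d^2\mathbb{S}(t\cdot\gamma)$ is uniformly positive definite on $\mathcal{Q}_{t\cdot\gamma}$, for $R$ small the equation $\partial_y\mathbb{S}(t,z,y)=0$ has, for each $(t,z)\in\overline{\mathcal{U}}_R$, a unique solution $y=\phi(t,z)$ depending smoothly on $(t,z)$, with $\phi(t,0)=0$. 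Setting $f(t,z):=\mathbb{S}(t,z,\phi(t,z))$ one obtains a \emph{finite-dimensional} smooth function on $\overline{\mathcal{U}}_R$ whose critical points correspond bijectively (with the same Morse index, by the standard reduction) to those of $\mathbb{S}$ on $\overline{\mathcal{U}}_R\times\overline{\mathcal{B}}_R$; in particular $f$ has $\{0\text{-section}\}$ as its only critical set, with index $\iota$ and nullity $\nu-1$ along the normal directions inside $\mathcal{Z}$.

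Next I would perturb in this finite-dimensional picture. Fix a cutoff function $\chi$ equal to $1$ near the $0$-section of $\mathcal{Z}$ and supported in the part of $\overline{\mathcal{U}}_R$ corresponding to $\mathcal{U}$, and choose a small generic vector (or a small generic quadratic form in the $z$-variables) to add to $f$, of the form $f'(t,z):=f(t,z)+\chi(t,z)\,\langle \ell,z\rangle$ (or $+\chi(t,z)\langle Qz,z\rangle$), using Sard's theorem to ensure $f'$ is Morse near the $0$-section; the $\T$-invariance of the setup can be broken harmlessly since we only need finitely many nondegenerate critical points, or retained by perturbing with a $\T$-invariant function having the origin as a Bott-nondegenerate circle and then further perturbing along the circle à la Bangert. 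For $R$ and the perturbation small enough, $f'$ still coincides with $f$ near $\partial\overline{\mathcal{U}}_R$, hence has no critical points there, and its finitely many critical points all lie in $\mathcal{U}$; by lower semicontinuity of the Morse index under $C^2$-small perturbations, each has index $\geq\iota$. Finally I would push this perturbation back up: define $\mathbb{S}'(t,z,y):=\mathbb{S}(t,z,y)+\big(f'(t,z)-f(t,z)\big)=\mathbb{S}(t,z,y)+\chi(t,z)\langle\ell,z\rangle$, which differs from $\mathbb{S}$ only on (the part of $\overline{\mathcal{U}}_R\times\overline{\mathcal{B}}_R$ corresponding to) $\mathcal{U}$ and is $C^2$-small. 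Since the perturbation is independent of $y$ and $C^2$-small, the $\mathbb{E}$-reduction of $\mathbb{S}'$ is still governed by a uniformly positive definite Hessian for $R$ small, its reduced function is exactly $f'$, and therefore $\crit(\mathbb{S}')$ on $\overline{\mathcal{U}}_R\times\overline{\mathcal{B}}_R$ is in index-preserving bijection with $\crit(f')$: finitely many points, all nondegenerate, all of index $\geq\iota$.

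It remains to check the Palais--Smale condition for $\mathbb{S}'$ on $\overline{\mathcal{U}}_R\times\overline{\mathcal{B}}_R$. This follows because $\mathbb{S}$ itself satisfies Palais--Smale there with respect to the product metric (as recorded in the paragraph preceding the lemma), the difference $\mathbb{S}'-\mathbb{S}$ has support with compact closure inside $\overline{\mathcal{U}}_R\times\overline{\mathcal{B}}_R$ and is $C^1$-small with derivative bounded, so $d\mathbb{S}'$ and $d\mathbb{S}$ differ by a term that is small in norm and whose vanishing set is controlled; concretely, a sequence with $d\mathbb{S}'\to0$ and bounded $\mathbb{S}'$ either eventually leaves $\supp(\mathbb{S}'-\mathbb{S})$, where it is a Palais--Smale sequence for $\mathbb{S}$, or stays in the region where the Lyapunov--Schmidt reduction applies and convergence is reduced to the finite-dimensional (hence automatic) statement. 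The main obstacle, and the point requiring the most care, is the Lyapunov--Schmidt reduction in the presence of the critical \emph{circle}: one must carry out the splitting $N(\T\cdot\gamma)\cong\mathcal{Z}\times\mathbb{E}$ uniformly in $t\in\T$ (using the triviality of $\mathcal{Q}$, already established), verify that the reduced function $f$ and the reduction map $\phi$ are globally smooth on $\overline{\mathcal{U}}_R$ with the correct index/nullity count, and ensure that all estimates — positive-definiteness, $C^2$-smallness, Palais--Smale — are uniform along the circle direction; once this is set up, the perturbation step is the routine finite-dimensional Marino--Prodi/Sard argument.
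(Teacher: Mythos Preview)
Your approach is essentially the paper's: Lyapunov--Schmidt reduce along $\mathbb{E}$ to a finite-dimensional function on $\overline{\mathcal{U}}_R$, perturb that to a Morse function, lift back, and verify Palais--Smale. One point is misstated, however: the reduced function $f(t,z)=\mathbb{S}(t,z,\phi(t,z))$ has Morse index $0$ (not $\iota$) at the $0$-section, because the variables $(t,z)$ parametrize precisely the tangent-plus-null directions on which $d^2\mathbb{S}$ vanishes. The reduction is not ``index-preserving''; rather $\mathrm{ind}(\mathbb{S}',p)=\mathrm{ind}(f',\pi(p))+\iota$, the shift $\iota$ coming from the nondegenerate fiberwise Hessian $\partial^2_{yy}\mathbb{S}$, and it is this shift (together with $\mathrm{ind}(f')\geq 0$) that yields $\mathrm{ind}(\mathbb{S}')\geq\iota$. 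Your lower-semicontinuity remark, if applied directly to $\mathbb{S}'$ rather than to $f'$, would also give the right conclusion.

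Two smaller technical fixes: a perturbation of the form $\chi(t,z)\langle\ell,z\rangle$ does not affect $\partial_t f$ and so need not make $f'$ Morse --- the paper bypasses this by simply invoking the density of Morse functions on the compact $\nu$-manifold $\overline{\mathcal{U}}_r$; and you need a cutoff $\rho(y)$ in the $y$-variable as well (as the paper uses), since a $y$-independent perturbation has support $\mathrm{supp}(\chi)\times\overline{\mathcal{B}}_R$, which need not be contained in the prescribed neighborhood $\mathcal{U}$.
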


\begin{proof}
Up to choosing a sufficiently small constant $\delta_1>0$, the inequality~\eqref{e:non_degenerate_Hessian} can be rephrased in our coordinates as
\begin{align*}
\|\partial^2_{yy}\mathbb{S}(t,0,0)[v,\cdot]\|\geq\delta_1\|v\|,\qquad
\forall t\in\T,\ v\in\mathbb{E}.
\end{align*}
By the implicit function theorem, there exist $r_1,r_2\in(0,R/2)$ and a smooth map 
\[\psi: \overline{\mathcal{U}}_{r_1}\to\overline{\mathcal{B}}_{r_2}\] 
such that, for all $(t,z,y)\in\overline{\mathcal{U}}_{r_1}\times\overline{\mathcal{B}}_{r_2}$, we have $\partial_y\mathbb{S}(t,z,y)=0$ if and only if $y=\psi(t,z)$. We define 
\[\Psi:\overline{\mathcal{U}}_{r_1}\times\overline{\mathcal{B}}_{r_2}\to \overline{\mathcal{U}}_{R}\times\overline{\mathcal{B}}_{R}\]
by $\Psi(t,z,y)=(t,z,\psi(t,z)+y)$.  Notice that $\Psi$ is a diffeomorphism onto a neighborhood of the critical circle $\{0\mbox{-section}\}\times\{0\}$. From now on we will employ the new coordinates defined by $\Psi$, and therefore we will simply write $\mathbb{S}:\overline{\mathcal{U}}_{r_1}\times\overline{\mathcal{B}}_{r_2}\to\R$ for the composition $\mathbb{S}\circ\Psi$. If we consider $\overline{\mathcal{U}}_{r_1}\times\overline{\mathcal{B}}_{r_2}$ as a trivial bundle over $\overline{\mathcal{U}}_{r_1}$ by means of the projection onto the first  factor, the function $\mathbb{S}$ has non-degenerate fiberwise critical points everywhere on the  0-section. Namely, up to reducing $\delta_1>0$ and $r_1>0$, we have
\begin{align}\label{e:fiberwise_nondegenerate_critical_points}
\partial_y\mathbb{S}(t,z,0)=0, \quad
\|\partial^2_{yy}\mathbb{S}(t,z,0)[v,\cdot]\|\geq\delta_1\|v\|,\qquad
\forall (t,z)\in \overline{\mathcal{U}}_{r_1},\ v\in\mathbb{E}.
\end{align}
In particular, for all $r\in(0,\min\{r_1,r_2\})$ sufficiently small, there exists $\delta_2>0$ such that
\begin{align*}
\|\partial_y\mathbb{S}(t,z,y)\| \geq\delta_2,\qquad
\forall (t,z,y)\in  \overline{\mathcal{U}}_{r}\times\big(\overline{\mathcal{B}_{r}\setminus \mathcal{B}_{r/2}}\big).
\end{align*}
We denote by $F:\overline{\mathcal{U}}_{r}\to\R$ the smooth function $F(t,y)=\mathbb{S}(t,y,0)$. Since the critical circle $\{0\mbox{-section}\}\times\{0\}$ is the only critical point of $\mathbb{S}:\overline{\mathcal{U}}_R\times \overline{\mathcal{B}}_R\to\R$, the 0-section is precisely the critical point set of $F$. In particular, there exists $\delta_3>0$ such that
\begin{align*}
\|\nabla F(t,z)\| \geq\delta_3,\qquad
\forall (t,z)\in  \overline{\mathcal{U}_{r}\setminus \mathcal{U}_{r/2}}.
\end{align*}
We recall that $\overline{\mathcal{U}}_{r}$ is a compact manifold of dimension $\nu$ with smooth boundary. By the density of the Morse functions into the space of smooth functions on finite-dimensional manifolds (see e.g.~\cite[Corollary~6.8]{mil63}), for any  $\epsilon>0$ we can find a Morse function $F_\epsilon:\overline{\mathcal{U}}_{r}\to\R$ that is $\epsilon$-close to $F$ in the $C^2$-topology. Notice that, if $\epsilon<\delta_3$, the function $F_\epsilon$ has finitely many non-degenerate critical points, all of which are contained in $\mathcal{U}_{r/2}$. Let $\chi:\mathcal{U}_r\to[0,1]$ be a compactly supported smooth function that is identically equal to $1$ on $\mathcal{U}_{r/2}$, and let $\rho:\mathcal{B}_r\to[0,1]$ be a compactly supported smooth function that is identically equal to $1$ on $\mathcal{B}_{r/2}$. We define the function $\mathbb{S}'$ of the lemma by 
\[\mathbb{S}'(t,z,y)=\mathbb{S}(t,z,y) + \chi(t,z)\rho(y) (F_\epsilon(t,z)-F(t,z)).\]
Let us verify that, for $\epsilon>0$ sufficiently small, $\mathbb{S}'$ satisfies the desired properties. First of all, $\mathbb{S}'$ tends to $\mathbb{S}$ in the $C^2$-topology as $\epsilon\to0$, and is equal to $\mathbb{S}$ outside $\mathcal{U}_{r}\times\mathcal{B}_{r}$. Since $\mathbb{S}:\overline{\mathcal{U}}_R\times\overline{\mathcal{B}}_R\to\R$ satisfies the Palais-Smale condition, there exists $\delta_4>0$ such that
\[
\|\nabla\mathbb{S}(t,z,y)\|\geq\delta_4,\qquad\forall (t,z,y)\in \overline{
(\mathcal{U}_r\times\mathcal{B}_r) 
\setminus
(\mathcal{U}_{r/2}\times\mathcal{B}_{r/2}) 
}.
\]
In the region $\overline{
(\mathcal{U}_r\times\mathcal{B}_r) 
\setminus
(\mathcal{U}_{r/2}\times\mathcal{B}_{r/2})}$ we have
\begin{align*}
\|\nabla\mathbb{S}'\|
&=
\|\nabla\mathbb{S}+ (F_\epsilon-F)(\rho\,\nabla\chi + \chi\,\nabla\rho)+ \chi\,\rho\,(\nabla F_\epsilon-\nabla F)\|\\
&\geq
\|\nabla\mathbb{S}\|-|F_\epsilon-F|(\|\nabla\chi\| + \|\nabla\rho\|)
-
\|\nabla F_\epsilon-\nabla F\|\\
&\geq 
\delta_4-\epsilon(\|\nabla\chi\| + \|\nabla\rho\|)-\epsilon\\
&\geq 
\delta_4/2
\end{align*}
provided 
\[\epsilon\leq\frac{\delta_4}{2(1+\max\|\nabla\chi\|+\max\|\nabla\rho\|)}.\] 
In particular, $\mathbb{S}'$ has no critical points and satisfies the Palais-Smale condition in this region. In the region $\overline{\mathcal{U}}_{r/2}\times\overline{\mathcal{B}}_{r/2}$, the function $(t,z,y)\mapsto\chi(t,z)\rho(y)$ is identically equal to $1$, and therefore $\partial_y\mathbb{S}'=\partial_y\mathbb{S}$. This, together with~\eqref{e:fiberwise_nondegenerate_critical_points}, shows that all the critical points of $\mathbb{S}'$ are contained in $\overline{\mathcal{U}}_{r/2}\times\{0\}$. All such critical points  are non-degenerate with Morse index larger than or equal to $\iota$, since they are non-degenerate for the restricted function $\mathbb{S}'|_{\mathcal{U}_{r/2}\times\{0\}}=F_\epsilon$ and since, by the inequality in~\eqref{e:fiberwise_nondegenerate_critical_points}, they are fiberwise non-degenerate and thus have fiberwise Morse index equal to $\iota$. In particular, $\mathbb{S'}$ has finitely many critical points in $\overline{\mathcal{U}}_{r/2}\times\overline{\mathcal{B}}_{r/2}$. Finally, suppose that $\{(t_n,z_n,y_n)\}$ is a Palais-Smale sequence for $\mathbb{S}'$ contained in $\overline{\mathcal{U}}_{r/2}\times\overline{\mathcal{B}}_{r/2}$. This implies $\partial_y\mathbb{S}'(t_n,z_n,y_n)=\partial_y\mathbb{S}(t_n,z_n,y_n)\to0$, and by~\eqref{e:fiberwise_nondegenerate_critical_points} we have that $y_n\to0$. Since the sequence $\{(x_n,z_n)\}$ varies inside the compact set $\overline{\mathcal{U}}_{r/2}$, the Palais-Smale sequence admits a converging subsequence. This proves that $\mathbb{S}'$ satisfies the Palais-Smale condition inside $\overline{\mathcal{U}}_{r/2}\times\overline{\mathcal{B}}_{r/2}$.
\end{proof}

\subsection{Properties of sublevel sets near  critical circles}
Now,  we trivialize the normal bundle $N (\T\cdot \gamma)$ (once again, we recall that this is possible since the orthogonal group of the infinite-dimensional separable Hilbert space $\mathbb{E}$ is connected). Thus, we make the identification $ N (\T\cdot \gamma)\cong \T\times \mathbb{E}$, and for a sufficiently small $R>0$, we  employ the exponential map in order to identify  $\T\times \overline{\mathcal{B}}_R\subset \T\times \mathbb{E}$ with a neighborhood of $\T\cdot \gamma$ which does not contain other critical points of $\mathbb{S}$. The restriction of the free-period action functional to this neighborhood will be a smooth function of the form $\mathbb{S}:\T\times \overline{\mathcal{B}}_R\to\R$. We equip $\T\times\overline{\mathcal{B}}_R$ with the product Riemannian metric induced by the Euclidean metric on $\T$ and the flat Hilbert metric on $\mathbb{E}$. Since this metric is uniformly equivalent to the standard one of $\mathcal{M}$, the free-period action functional $\mathbb{S}$ still satisfies the Palais-Smale condition with respect to it. We denote by $\nabla \mathbb{S}$ the gradient of the free-period action functional $\mathbb{S}$ with respect to the product metric, and by $\phi_s$ the associated negative gradient flow. We also set $c$ to be the critical value $\mathbb{S}(\T\times\{0\})$. The following lemma, which is essentially due to Gromoll and Meyer~\cite[Sect.~2]{gm69}, provides neighborhoods of the critical circle with good properties. 

\begin{lem}\label{l:good_nbhd}
The isolated critical circle $\T\times\{0\}$ has a fundamental system of connected open neighborhoods $\mathcal{U}$ with the following property: There exists $\epsilon=\epsilon(\mathcal{U})>0$ such that $\mathcal{U}\subset\{\mathbb{S}>c-\epsilon\}$ and, if $y\in \mathcal{U}$ and $\phi_{s}(y)\not\in \mathcal{U}$ for some $s>0$, then $\mathbb{S}(\phi_s(y))\leq c-\epsilon$.
\end{lem}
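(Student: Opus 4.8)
The plan is to adapt the construction of ``special neighbourhoods'' of an isolated critical point from \cite[Sect.~2]{gm69} to the critical circle $\T\times\{0\}$, working in the chart $\T\times\overline{\mathcal{B}}_R$ with the product metric as above. The point to keep in mind is that the naive choice $\mathcal{U}=\{\mathbb{S}>c-\epsilon\}\cap(\T\times\mathcal{B}_\rho)$ is \emph{not} good enough: a negative gradient trajectory may leave such a set through $\T\times\partial\mathcal{B}_\rho$ while $\mathbb{S}$ is still $>c-\epsilon$. This, however, can only happen for trajectories that have already wandered far from the critical circle, and the remedy is to carve $\mathcal{U}$ out of the flow rather than out of a fixed ball.

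First one extracts a quantitative ``traversal cost''. No lower bound on $\|\nabla\mathbb{S}\|$ is available near the (possibly degenerate) critical circle itself, but for every small $\rho>0$ the annular region $\mathcal{A}_\rho:=\T\times(\overline{\mathcal{B}}_\rho\setminus\mathcal{B}_{\rho/2})$ is closed and contains no critical point, so the Palais--Smale condition gives $\delta(\rho):=\inf_{\mathcal{A}_\rho}\|\nabla\mathbb{S}\|>0$, while plainly $M(\rho):=\sup_{\T\times\overline{\mathcal{B}}_\rho}\|\nabla\mathbb{S}\|<\infty$. Along the negative gradient flow one has $\tfrac{d}{ds}\mathbb{S}=-\|\nabla\mathbb{S}\|^2$, while the fibre-radius (i.e.\ the $\mathbb{E}$-norm of the second coordinate) changes at rate at most $\|\nabla\mathbb{S}\|$, the metric being a product. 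Hence a trajectory that crosses $\mathcal{A}_\rho$ from fibre-radius $\rho/2$ to fibre-radius $\rho$ spends time $\ge\rho/(2M(\rho))$ in $\mathcal{A}_\rho$ and loses there at least $\mu(\rho):=\delta(\rho)^2\rho/(2M(\rho))>0$ of its action.

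Now fix a small $\rho>0$, choose $\epsilon\in(0,\mu(\rho))$, and — using that $\mathbb{S}\to c$ uniformly on small neighbourhoods of the compact circle — choose an open neighbourhood $\mathcal{N}\subset\T\times\mathcal{B}_{\rho/2}$ of $\T\times\{0\}$ with $\inf_\mathcal{N}\mathbb{S}>c-\epsilon$ and $\sup_\mathcal{N}\mathbb{S}<c-\epsilon+\mu(\rho)$. Let $\mathcal{S}$ be the set of $x\in\T\times\mathcal{B}_\rho$ with $\mathbb{S}(x)>c-\epsilon$ and such that $\phi_s(x)\in\T\times\mathcal{B}_\rho$ for every $s\ge0$ with $\phi_{[0,s]}(x)\subset\T\times\overline{\mathcal{B}}_\rho$ and $\mathbb{S}(\phi_s(x))>c-\epsilon$ — informally, the forward trajectory of $x$ cannot touch $\T\times\partial\mathcal{B}_\rho$ before $\mathbb{S}$ has dropped to $c-\epsilon$. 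By the estimate above, $\mathcal{N}\subset\mathcal{S}$: a trajectory issued from $\mathcal{N}$ that touched $\T\times\partial\mathcal{B}_\rho$ while staying above $c-\epsilon$ would first cross $\mathcal{A}_\rho$, hence would lose there at least $\mu(\rho)$, reaching a value $\le\sup_\mathcal{N}\mathbb{S}-\mu(\rho)<c-\epsilon$, a contradiction. Let $\mathcal{U}$ be the connected component of $\mathcal{S}$ containing $\T\times\{0\}$: it is an open connected neighbourhood contained in $\{\mathbb{S}>c-\epsilon\}$, and as $\rho\to0$ the sets $\mathcal{U}\subset\T\times\mathcal{B}_\rho$ shrink down to the circle, so they form a fundamental system. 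The exit property is then checked at the first exit time $s_0$: if one had $\mathbb{S}(\phi_{s_0}(x))>c-\epsilon$, then $\mathbb{S}(\phi_s(x))>c-\epsilon$ on $[0,s_0]$ and $\phi_{[0,s_0)}(x)\subset\mathcal{U}\subset\T\times\mathcal{B}_\rho$, so applying the defining property of $x\in\mathcal{S}$ at all times $\ge s_0$ shows that $\phi_{s_0}(x)$ again lies in $\mathcal{S}$, hence in its component $\mathcal{U}$ (being the limit of $\phi_{[0,s_0)}(x)\subset\mathcal{U}$), contradicting that $s_0$ is an exit time; therefore $\mathbb{S}(\phi_{s_0}(x))\le c-\epsilon$, and monotonicity of $\mathbb{S}$ along the flow gives the conclusion at every later time as well.

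The main obstacle is ensuring that $\mathcal{S}$ is genuinely open (so that its component $\mathcal{U}$ is an open neighbourhood): a trajectory may be tangent to $\T\times\partial\mathcal{B}_\rho$ from inside, so a priori $\mathcal{S}$ need only be locally closed there. This is handled as in \cite[Sect.~2]{gm69}: either choose $\rho$ in a full-measure set of radii so that the flow is transverse to $\T\times\partial\mathcal{B}_\rho$ along the level $\{\mathbb{S}=c-\epsilon\}$, or replace the fibre-radius by a short-time average of it along the flow. With this adjustment $\mathcal{S}$ is open and the argument above goes through verbatim. The other point to watch — that when the critical circle is strongly degenerate one must take $\mathcal{N}$ smaller than any fixed fraction of $\T\times\mathcal{B}_\rho$, since $\mu(\rho)$ may be far smaller than $\rho^2$ — is already accounted for by the order $\rho\mapsto\epsilon\mapsto\mathcal{N}$ in which the constants are chosen.
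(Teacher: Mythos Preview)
Your approach is genuinely different from the paper's. The paper does \emph{not} carve $\mathcal{U}$ out of the flow; instead it introduces an auxiliary function $\mathbb{G}(t,z)=\tfrac12|z|^2+h\,\mathbb{S}(t,z)$, chooses $h$ large enough so that $\langle\nabla\mathbb{S},\nabla\mathbb{G}\rangle>0$ on the annulus $\T\times(\mathcal{B}_r\setminus\mathcal{B}_{r/2})$, and then takes $\mathcal{U}=\{|\mathbb{S}-c|<\epsilon\}\cap\{\mathbb{G}<g\}$ for suitable $\epsilon,g$. This set is \emph{manifestly} open, and the exit property follows because along the negative gradient flow $\mathbb{G}$ is strictly decreasing wherever $\{\mathbb{G}=g\}$ meets the annulus, so the orbit cannot cross $\{\mathbb{G}=g\}$ outward; hence the only possible exit is through $\{\mathbb{S}=c-\epsilon\}$. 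Your ``traversal cost'' estimate is morally the same phenomenon (the term $h|\nabla\mathbb{S}|^2$ dominates $\langle\nabla\mathbb{S},z\rangle$), but the auxiliary-function packaging dispenses with any dynamical bookkeeping.

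The gap in your argument is precisely the openness of $\mathcal{S}$, and your two suggested fixes do not close it. With your definition, $x\in\mathcal{S}$ amounts to $\tau_2(x)\le\tau_1(x)$, where $\tau_1$ is the first time the orbit reaches radius $\rho$ and $\tau_2$ the first time $\mathbb{S}$ drops to $c-\epsilon$. On the borderline $\tau_1(x)=\tau_2(x)<\infty$ (the orbit hits $\partial\mathcal{B}_\rho$ exactly when $\mathbb{S}=c-\epsilon$), transversality of the flow to the sphere does \emph{not} prevent a nearby $x'$ from having $\tau_1(x')<\tau_2(x')$: both hitting times move continuously, and either order can occur. Replacing $\tau_2\le\tau_1$ by the strict inequality would make $\mathcal{S}$ open (since $\tau_2$ is continuous and $\tau_1$ is lower semicontinuous), but then the critical circle itself, where $\tau_1=\tau_2=+\infty$, is excluded from $\mathcal{S}$. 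A short-time average of the radius does not resolve this dichotomy either. The cleanest remedy is exactly the paper's: replace the radius by the Lyapunov-type function $\mathbb{G}$, whose sublevel boundary is automatically an exit set for the flow in the relevant region, so no separate openness verification is needed.
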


\begin{proof}
We slightly modify Chang's treatment~\cite[page~49]{cha93} in order to deal with our isolated critical circle $\T\times\{0\}$. We can assume without loss of generality that $c=\mathbb{S}(t,0)=0$ for all $t\in\T$. We consider an auxiliary function $\mathbb{G}:\T\times\overline{\mathcal{B}}_R\to\R$ given by
\[
\mathbb{G}(t,z)=\tfrac12 |z|^2 + h\,\mathbb{S}(t,z),
\]
where $h>0$ is a constant that we will determine shortly. The open set $\mathcal{U}$ of the statement will be of the form
\[
\mathcal{U}=\{-\epsilon< \mathbb{S}<\epsilon\}\cap\{\mathbb{G}<g\},
\]
for suitable constants $\epsilon,g>0$. Let us now proceed to determine all the constants. Since $\mathbb{S}$ satisfies the Palais-Smale condition inside $\T\times \overline{\mathcal{B}}_R$, for all $\delta_2>0$, we can find (arbitrarily small) $r\in(0,R)$ and $\delta_1\in(0,\delta_2)$ such that 
\begin{align*}
\delta_2 & \geq |\nabla \mathbb{S}(t,z)|,\qquad\forall  (t,z)\in \T\times \mathcal{B}_r,\\
\delta_1 & \leq |\nabla \mathbb{S}(t,z)|,\qquad\forall  (t,z)\in \T\times (\mathcal{B}_r\setminus \mathcal{B}_{r/2}).
\end{align*}
On the region $\T\times (\mathcal{B}_r\setminus \mathcal{B}_{r/2})$ we have
\begin{align*}
\langle \nabla \mathbb{S},\nabla \mathbb{G}\rangle
= h |\nabla \mathbb{S}|^2 + \langle\nabla \mathbb{S},z\rangle
\geq
h \delta_1^2 - r\delta_2.
\end{align*}
We fix the constant $h > r\delta_2/\delta_1^2$, so that
\begin{align}\label{e:gradients_S_G}
\langle \nabla \mathbb{S}(t,z),\nabla \mathbb{G}(t,z)\rangle > 0,\qquad\forall(t,z)\in\T\times (\mathcal{B}_r\setminus \mathcal{B}_{r/2}).
\end{align}
In order to conclude the proof, it is enough to find values of $\epsilon$ and $g$ such that:
\begin{itemize}
\item[(i)] $\mathcal{U}\subset \T\times \mathcal{B}_r$, 
\item[(ii)] $(\T\times \mathcal{B}_{r/2})\cap\{-\epsilon<\mathbb{S}<\epsilon\}\subset \mathcal{U}$.
\end{itemize}
Let us show that~(i) and~(ii) imply the Lemma. Property~(i) allows us to control the size of $\mathcal{U}$. Property~(ii) implies that $\mathcal{U}$ is a neighborhood of $\T\times\{0\}$ contained in the super-level set $\{\mathbb{S}>-\epsilon\}$. If $\mathcal{U}$ is not connected, we disregard all its connected components other than the one containing $\T\times\{0\}$. Consider a point $y\in \mathcal{U}$ whose forward negative gradient flow orbit is not entirely contained in $\mathcal{U}$, and set
\[s_0:=\min\{s>0\ |\ \phi_s(y)\not\in \mathcal{U}\}.\] 
The point $\phi_{s_0}(y)$ must be contained in the boundary of the open set $\mathcal{U}$, and we have $\mathbb{S}(\phi_{s_0}(y))<\mathbb{S}(y)<\epsilon$. By the definition of $\mathcal{U}$, if $\mathbb{S}(\phi_{s_0}(y))\neq-\epsilon$ we must have $\mathbb{G}(\phi_{s_0}(y))=g$, and by properties~(i--ii) we must have $\phi_{s_0}(y)\in\T\times (\mathcal{B}_r\setminus \mathcal{B}_{r/2})$. By equation~\eqref{e:gradients_S_G}, for all  $s\in(0,s_0)$ sufficiently close to $s_0$, we have $\mathbb{G}(\phi_{s}(y))>g$. This implies that $\phi_{s}(y)\not\in \mathcal{U}$ and contradicts the definition of $s_0$. Therefore we must have $\mathbb{S}(\phi_{s_0}(y))=-\epsilon$, which readily implies the Lemma.

Now, conditions~(i) and~(ii) can be rewritten as
\begin{itemize}
\item[(i)] if $|\mathbb{S}(t,z)|<\epsilon$ and $\mathbb{G}(t,z)<g$, then $|z|<r$;
\item[(ii)] if $|z|<r/2$ and $|\mathbb{S}(t,z)|<\epsilon$, then $\mathbb{G}(t,z)<g$.
\end{itemize}
Condition~(i) is satisfied provided $2g+2h\epsilon<r^2$, while condition~(ii) is satisfied provided $r^2/8+h\epsilon<g$. In order to satisfy them simultaneously, we can choose 
$\epsilon=3 r^2/{32 h}$ and $g={5 r^2}/{16}$.
\end{proof}

In order to state the next result, we go back to the general global setting in which the free-period action functional has the form $\mathbb{S}:\mathcal{M}\to\R$. 

\begin{lem}\label{l:critical_sublevel}
Let $\T\cdot\gamma$ be an isolated critical circle of the free-period action functional $\mathbb{S}$ with critical value $c:=\mathbb{S}(\gamma)$. This circle admits a fundamental system of connected open neighborhoods $\mathcal{U}$ such that $\mathcal{U}\cap\{\mathbb{S}<c\}$ has finitely many connected components. Moreover, if $\ind(\gamma)\geq2$, for every such neighborhood the intersection $\mathcal{U}\cap\{\mathbb{S}<c\}$ is non-empty and connected.
\end{lem}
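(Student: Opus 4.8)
The plan is to reduce the behaviour of $\mathbb{S}$ near $\T\cdot\gamma$ to a finite-dimensional model and then count connected components essentially by hand. Write $\iota:=\ind(\gamma)$ and recall that the transverse nullity $\nu-1=\rank\mathcal{Z}$ is finite. By the Gromoll--Meyer splitting lemma for the critical circle $\T\cdot\gamma$ --- obtained by straightening the fibrewise critical points with the implicit function theorem, exactly as in the proof of Lemma~\ref{l:Marino-Prodi}, and then applying a fibrewise Morse lemma along the non-degenerate directions, cf.~\cite{gm69}; the transverse invertibility~\eqref{e:non_degenerate_Hessian} of the Hessian is precisely what makes this applicable --- one obtains, for every small $R>0$, a chart around $\T\cdot\gamma$ with coordinates $(t,z,y^-,y^+)$ ranging in $\T\times\mathcal{U}_R\times\mathcal{B}^-_R\times\mathcal{B}^+_R$, where $\mathcal{U}_R\subset\mathcal{Z}$ is the radius-$R$ disc subbundle, $\mathcal{B}^-_R$ is the radius-$R$ ball of a fixed $\iota$-dimensional space $\mathbb{E}^-$, and $\mathcal{B}^+_R$ is the radius-$R$ ball of a Hilbert space $\mathbb{E}^+$, in which
\[
\mathbb{S}(t,z,y^-,y^+)=c+f(t,z)-|y^-|^2+\|y^+\|^2 ,
\]
with $f$ smooth, $f(t,0)=0$, and $\crit(f)\cap\mathcal{U}_R$ equal to the zero section. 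As fundamental system of connected open neighbourhoods of $\T\cdot\gamma$ I would take the boxes $\mathcal{U}:=\T\times\mathcal{U}_\sigma\times\mathcal{B}^-_\rho\times\mathcal{B}^+_\rho$, with $\rho\downarrow 0$ and $\sigma=\sigma(\rho)\downarrow 0$ chosen small enough that $\sup_{\T\times\mathcal{U}_\sigma}|f|<\rho^2/4$ (possible since $f$ vanishes on the zero section).

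Set $A:=\mathcal{U}\cap\{\mathbb{S}<c\}$. Contracting the $\mathbb{E}^+$-variable by $(s,(t,z,y^-,y^+))\mapsto(t,z,y^-,(1-s)y^+)$, which decreases $\mathbb{S}$ and preserves $\mathcal{U}$, exhibits a strong deformation retraction of $A$ onto
\[
W:=\bigl\{(t,z,y^-)\in\T\times\mathcal{U}_\sigma\times\mathcal{B}^-_\rho \ \big| \ |y^-|^2>f(t,z)\bigr\}.
\]
Assume first $\iota\ge1$. By the choice of $\sigma$, the ``outer shell'' $\Sigma:=\T\times\mathcal{U}_\sigma\times\{|y^-|=\rho/2\}$ lies in $W$, and, being homeomorphic to $\mathcal{U}_\sigma\times S^{\iota-1}$, it has one connected component if $\iota\ge2$ and two if $\iota=1$. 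The crucial observation is that every point of $W$ can be joined to $\Sigma$ by a path inside $W$: if $y^-\neq0$, push $y^-$ radially so that $|y^-|$ moves monotonically to $\rho/2$, along which $f(t,z)<|y^-|^2<\rho^2$ holds throughout; if $y^-=0$, then $f(t,z)<0$, so for any unit vector $e\in\mathbb{E}^-$ the segment $s\mapsto(t,z,\tfrac{s\rho}{2}e)$ stays in $W$ and reaches $\Sigma$. Hence the inclusion $\Sigma\hookrightarrow W$ is surjective on $\pi_0$, so $\#\pi_0(W)\le\#\pi_0(\Sigma)\le 2$ and $W\supseteq\Sigma\neq\varnothing$. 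Since $A$ deformation retracts onto $W$, this shows that $A$ has at most two components, and that $A$ is non-empty and connected whenever $\iota\ge2$, which is the ``moreover'' assertion.

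It remains to see that $A$ has finitely many components also when $\iota=0$; in that case $\mathbb{E}^-=0$ and $W=\{(t,z)\in\T\times\mathcal{U}_\sigma\mid f(t,z)<0\}$ is a relatively open subset of a finite-dimensional manifold carrying the isolated critical circle $\T\times\{0\}$ of the smooth function $f$. Here I would invoke the finiteness of the critical groups of such a circle \cite{gm69}: in a Gromoll--Meyer pair $(\mathcal{W},\mathcal{W}^{-})$ around $\T\cdot\gamma$ the total piece $\mathcal{W}$ deformation retracts onto the critical circle, so $\widetilde H_0(\mathcal{W})=0$, while $H_*(\mathcal{W},\mathcal{W}^{-})$ is finitely generated; the exact sequence of the pair then forces $\widetilde H_0(\mathcal{W}^{-})$ --- hence $\widetilde H_0(A)$, since $A\simeq\mathcal{W}^{-}$ for a suitable choice of box --- to be finitely generated, so $A$ has finitely many components. (Run uniformly in $\iota$, this exact-sequence argument would in fact cover the first assertion for all $\iota$, but the explicit shell argument above is needed anyway for the connectedness statement.) I expect the one genuinely delicate ingredient to be the splitting-lemma reduction in the infinite-dimensional setting; once the model $c+f(t,z)-|y^-|^2+\|y^+\|^2$ is in hand, the remaining component count is the soft geometry carried out above.
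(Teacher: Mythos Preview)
Your approach is correct and genuinely different from the paper's. You invoke a full parametrized splitting lemma to put $\mathbb{S}$ into the normal form $c+f(t,z)-|y^-|^2+\|y^+\|^2$, then argue geometrically: contract the $y^+$ factor and, for $\iota\ge1$, connect every point of the reduced sublevel to the shell $\{|y^-|=\rho/2\}$, whose number of components bounds $\#\pi_0(A)$. The paper avoids the infinite-dimensional Morse lemma altogether: it takes the ``good'' neighbourhoods from Lemma~\ref{l:good_nbhd}, applies the Marino--Prodi perturbation of Lemma~\ref{l:Marino-Prodi} to replace the critical circle by finitely many non-degenerate critical points of index at least $\iota$, and then bounds $\rank H_0(\mathcal{U}\cap\{\mathbb{S}\le c-\tfrac\epsilon2\})$ via the Morse inequality $\rank H_1(\mathcal{U},\mathcal{U}\cap\{\mathbb{S}\le c-\tfrac\epsilon2\})\le\#\crit_1(\mathbb{S}')$ together with the long exact sequence of the pair; a renormalised gradient-flow retraction identifies this set with $\mathcal{U}\cap\{\mathbb{S}<c\}$. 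Your route yields a sharper explicit bound ($\le2$ components whenever $\iota\ge1$) and is pleasantly hands-on, at the price of the parametrized Morse lemma you rightly flag as delicate; the paper's route is softer and reuses tools already in place. Two minor caveats on your write-up: the negative bundle over $\T\cdot\gamma$ need not be globally trivial, so ``$\mathcal{U}_\sigma\times S^{\iota-1}$'' should really be read as the sphere subbundle (still connected for $\iota\ge2$, being a bundle with connected fibre over a connected base); and in the $\iota=0$ case, identifying your box-sublevel with the exit set of a Gromoll--Meyer pair needs either choosing the box to be a good neighbourhood for $f$ or running the Morse-inequality argument directly on the finite-dimensional $f$ --- which is then the paper's proof in miniature.
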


\begin{proof}
The fundamental system of connected open neighborhoods will be the one given by Lemma~\ref{l:good_nbhd}. Let $\mathcal{U}$ be any open set in this fundamental system, and let $\epsilon=\epsilon(\mathcal{U})>0$ be the associated constant given by Lemma~\ref{l:good_nbhd}. Let $\mathcal{V}$ be another neighborhood of the critical circle $\T\cdot\gamma$ whose closure in contained in $\mathcal{U}\cap\{\mathbb{S}>c-\tfrac\epsilon2\}$. By Lemma~\ref{l:Marino-Prodi}, we can find a function $\mathbb{S}'$ such that $\mathbb{S}'|_{\mathcal{V}}>{c-\tfrac\epsilon2}$, $\mathbb{S'}=\mathbb{S}$ outside $\mathcal{V}$, and $\mathcal{V}$ contains only finitely many  critical points of $\mathbb{S}'$, all of which are non-degenerate and with Morse index larger than or equal to $\ind(\gamma)$. Since $\mathcal{U}\cap\{\mathbb{S}\leq c-\tfrac\epsilon2\}$ is the exit set of $\mathcal{U}$ for the negative gradient flow of $\mathbb{S}'$ in forward time, we have the Morse inequality
\begin{align}\label{e:Morse_inequality}
\rank H_1(\mathcal{U},\mathcal{U}\cap\{\mathbb{S}\leq c-\tfrac\epsilon2\})
\leq 
\# \crit_1 (\mathbb{S}'|_{\mathcal{V}}) <\infty,
\end{align}
where $\crit_1 (\mathbb{S}'|_{\mathcal{V}})$ denotes the set of critical points of $\mathbb{S}'|_{\mathcal{V}}$ with Morse index 1. Let $r$ be the number of path-connected components of the subspace $\mathcal{U}\cap\{\mathbb{S}\leq c-\tfrac\epsilon2\}$. The Morse inequality \eqref{e:Morse_inequality}, together with the  homology long exact sequence
\[
...\longrightarrow H_1(\mathcal{U},\mathcal{U}\cap\{\mathbb{S}\leq c-\tfrac\epsilon2\};\Z)\longrightarrow H_0(\mathcal{U}\cap\{\mathbb{S}\leq c-\tfrac\epsilon2\};\Z)\longrightarrow H_0(\mathcal{U};\Z)\longrightarrow ...
\]
implies  
\begin{align}\label{e:bound_on_connected_components}
r=\rank H_0(\mathcal{U}\cap\{\mathbb{S}\leq c-\tfrac\epsilon2\};\Z)\leq \# \crit_1 (\mathbb{S}'|_{\mathcal{V}}) +1<\infty.
\end{align}
Now, we denote by $\theta_t$ the flow of the renormalized gradient $-\nabla \mathbb{S}/|\nabla \mathbb{S}|^2$, which is a well defined  smooth vector field on $\mathcal{U}\cap\{\mathbb{S}<c\}$. Notice that, if $\theta_s(\beta)$ is well-defined for all $s\in[0,t]$, we have $\mathbb{S}(\beta) - \mathbb{S}(\theta_t(\beta)) = t$. Let $\tau:\mathcal{U}\cap\{\mathbb{S}<c\}\to[0,\infty)$ be the continuous function given by $\tau(\beta)=\max\{0,\mathbb{S}(\beta)-c+\epsilon/2\}$. By the properties of $\mathcal{U}$, we have
\[
\theta_{\tau(\beta)}(\beta)\in \mathcal{U}\cap\{\mathbb{S}\leq c-\tfrac\epsilon2\},
\qquad
\forall \beta\in \mathcal{U}\cap\{\mathbb{S}<c\}.
\]
This shows that  $\mathcal{U}\cap\{\mathbb{S}\leq c-\tfrac\epsilon2\}$ is a deformation retract of $\mathcal{U}\cap\{\mathbb{S}<c\}$, and therefore this latter open set  has $r$ path-connected components.

As for the ``moreover'' part of the lemma, assume that $\ind(\gamma)\geq2$. In particular $\gamma$ is not a local minimum, and therefore $r\geq1$. But since all the critical points of the function $\mathbb{S}'|_{\mathcal{V}}$ have Morse index larger than or equal to $\ind(\gamma)$, the inequality~\eqref{e:bound_on_connected_components} implies that $r\leq1$, and we conclude that $r=1$.
\end{proof}

\subsection{Iterated mountain passes}
After these preliminaries, we can finally prove the main result of this section, which shows that critical circles of the free-period action functional cannot be of mountain pass type if iterated sufficiently many times.

\begin{thm}\label{t:iterated_mountain_pass}
Let $\T\cdot \gamma $ be a critical circle of the free-period action functional $\mathbb{S}$ with critical value $c=\mathbb{S}(\gamma)$. Assume that all the iterates of $\gamma$ belong to  isolated critical circles of $\mathbb{S}$. Then, for all integers $n$ large enough there exists an open neighborhood $\mathcal{W}$ of   $\T\cdot\gamma^n$ with the following property:  If any two points $\gamma_0,\gamma_1\in\{\mathbb{S}<nc\}$ are contained in the  same connected component of  $\{\mathbb{S}<nc\}\cup \mathcal{W}$, they are actually contained in the same connected component of  $\{\mathbb{S}<nc\}$.
\end{thm}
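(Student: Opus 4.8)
The statement asserts that for high iterates $\gamma^n$, the critical circle $\T\cdot\gamma^n$ cannot ``connect'' two components of the open sublevel set $\{\mathbb{S}<nc\}$ — i.e.\ it is not a mountain pass. The plan is to combine the Gromoll--Meyer type results (Lemma~\ref{l:iterated_nullity}, Lemma~\ref{l:Gromoll-Meyer}), the local sublevel analysis near critical circles (Lemma~\ref{l:critical_sublevel}), and a homotopy argument in the spirit of Bangert~\cite{ban80}. First I would fix, using Lemma~\ref{l:iterated_nullity} applied to $\gamma$ (and to all the finitely many critical circles in the iteration tower up to the relevant index), a residue class of iterates $n\in\N_j$ on which $\mathrm{null}(\gamma^n)$ is constant and $n_j\mid n$; restricting to such $n$ guarantees $\mathrm{ind}(\gamma^n)=\mathrm{ind}(\gamma^{n_j n/n_j})$-type stabilization, so that Lemma~\ref{l:Gromoll-Meyer} applies with $\gamma$ replaced by $\gamma^{n_j}$ and the pair $(\gamma^{n_j},\gamma^n)$. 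The point of choosing $n$ large is twofold: first, for $n$ large enough the Morse index $\mathrm{ind}(\gamma^n)$ either stabilizes at $0$ or grows past $2$ (by Proposition~\ref{p:iterated_Morse_index}, $\overline{\mathrm{ind}}(\gamma)=0$ forces $\mathrm{ind}(\gamma^n)=0$ for all $n$, while $\overline{\mathrm{ind}}(\gamma)>0$ forces $\mathrm{ind}(\gamma^n)\to\infty$); second, the iteration map $\psi^{n}$ factors through $\psi^{n/n_j}\circ\psi^{n_j}$ in a way compatible with the conformal metric~\eqref{e:Riemannian_metric}.

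The two regimes are handled separately. In the \emph{index-zero} case $\mathrm{ind}(\gamma^n)=0$ for all $n$: then $\gamma^n$ is a local minimum of $\mathbb{S}$ (here one uses that $\mathrm{ind}=0$ plus the Palais--Smale condition near the critical circle gives a genuine local minimum modulo the $\T$-action, cf.\ Lemma~\ref{l:good_nbhd}), so any sufficiently small neighborhood $\mathcal{W}$ of $\T\cdot\gamma^n$ lies in $\{\mathbb{S}\geq nc\}$; adjoining such a $\mathcal{W}$ to $\{\mathbb{S}<nc\}$ cannot merge components because $\mathcal{W}$ meets $\{\mathbb{S}<nc\}$ only in $\mathcal{W}\cap\{\mathbb{S}=nc\}$'s neighborhood, and by Lemma~\ref{l:critical_sublevel} the set $\mathcal{W}\cap\{\mathbb{S}<nc\}$ has finitely many components each of which retracts into a fixed component of $\{\mathbb{S}<nc\}$ independent of $\mathcal{W}$ — here one must check that shrinking $\mathcal{W}$ does not split a component off into a ``new'' region, which follows from the fundamental-system property in Lemma~\ref{l:good_nbhd}. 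In the \emph{large-index} case $\mathrm{ind}(\gamma^n)\geq2$: by the ``moreover'' part of Lemma~\ref{l:critical_sublevel}, for the good neighborhoods $\mathcal{U}$ the set $\mathcal{U}\cap\{\mathbb{S}<nc\}$ is non-empty and \emph{connected}; then any path in $\{\mathbb{S}<nc\}\cup\mathcal{W}$ joining $\gamma_0$ to $\gamma_1$ can be pushed off $\mathcal{W}$ by replacing each maximal excursion of the path into $\mathcal{W}$ by an arc inside the connected set $\mathcal{U}\cap\{\mathbb{S}<nc\}$ joining its two endpoints (which lie in $\{\mathbb{S}<nc\}$), using the deformation retraction built in Lemma~\ref{l:critical_sublevel} to keep the replaced arc in the sublevel set; this exhibits a path entirely in $\{\mathbb{S}<nc\}$, so $\gamma_0,\gamma_1$ lie in the same component there.

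The role of Lemma~\ref{l:Gromoll-Meyer} and the conformal metric is to reduce everything near $\T\cdot\gamma^n$ to the ``first'' iterate $\gamma^{n_j}$ at which the index and nullity have stabilized: the retraction $r_t$ of Lemma~\ref{l:Gromoll-Meyer} lets us deform a neighborhood $\mathcal{V}$ of $\psi^{n/n_j}(\mathcal{U})$ down to $\psi^{n/n_j}(\mathcal{U})$ while strictly decreasing $\mathbb{S}$ off the image, which is exactly what turns ``$\mathcal{W}$ adjoined to $\{\mathbb{S}<nc\}$'' into ``a neighborhood of $\psi^{n/n_j}(\mathcal{U})$ adjoined to $\{\mathbb{S}<nc\}$'' without changing the connectivity statement. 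So the structure is: (1) stabilize index/nullity via Lemma~\ref{l:iterated_nullity}; (2) use Lemma~\ref{l:Gromoll-Meyer} + conformality to transport the problem to the stabilized iterate; (3) dichotomy on $\mathrm{ind}(\gamma^n)\in\{0\}$ vs.\ $\geq2$ using Proposition~\ref{p:iterated_Morse_index}; (4) conclude with Lemma~\ref{l:critical_sublevel} in each case.

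\textbf{Main obstacle.} The delicate point is step~(2)--(3) interface: one must ensure that \emph{all} the critical circles $\T\cdot\gamma^n$ that could obstruct connectivity — not just $\gamma$ itself — have the right index, and that the choice of $\mathcal{W}$ can be made uniformly. This is where the hypothesis that \emph{all} iterates lie in isolated critical circles, combined with Lemma~\ref{l:iterated_nullity}'s finite partition of $\N$, is essential: it bounds the number of distinct local models one has to control. A second subtlety is the borderline index value $1$: the argument genuinely needs $\mathrm{ind}(\gamma^n)\neq1$ for large $n$, which is why the dichotomy is $0$ versus $\geq2$ and not $0$ versus $\geq1$ — one must invoke Proposition~\ref{p:iterated_Morse_index} to rule out that $\mathrm{ind}(\gamma^n)$ stays equal to $1$ infinitely often (if $\overline{\mathrm{ind}}(\gamma)>0$ then $\mathrm{ind}(\gamma^n)\to\infty$ so $\mathrm{ind}(\gamma^n)\geq2$ eventually; if $\overline{\mathrm{ind}}(\gamma)=0$ then $\mathrm{ind}(\gamma^n)=0$ for all $n$). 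Making the ``push the path off $\mathcal{W}$'' homotopy precise — in particular controlling infinitely many or accumulating excursions of a continuous path into $\mathcal{W}$ — is the technical heart, handled via the Bangert-style homotopy and the deformation retraction of Lemma~\ref{l:critical_sublevel}.
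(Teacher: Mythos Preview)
Your handling of the large-index case ($\overline{\ind}(\gamma)>0$, hence $\ind(\gamma^n)\geq 2$ eventually) via the ``moreover'' clause of Lemma~\ref{l:critical_sublevel} is exactly right and matches the paper.

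The index-zero case, however, has a genuine gap. Your claim that ``$\ind(\gamma^n)=0$ for all $n$ $\Rightarrow$ $\gamma^n$ is a local minimum'' is false: a degenerate critical point with Morse index zero can very well fail to be a local minimum (think of a monkey saddle). So you cannot arrange $\mathcal{W}\subset\{\mathbb{S}\geq nc\}$. What Lemma~\ref{l:critical_sublevel} actually gives you in this case is only that $\mathcal{W}\cap\{\mathbb{S}<nc\}$ has \emph{finitely many} components --- but nothing prevents two of these from lying in \emph{different} global components of $\{\mathbb{S}<nc\}$ and being bridged by a path through $\mathcal{W}$. Your sentence ``each of which retracts into a fixed component of $\{\mathbb{S}<nc\}$'' begs exactly the question at issue.

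The missing ingredient --- and the real reason the theorem requires $n$ large --- is Bangert's ``pulling one loop at a time'' homotopy. In the paper's argument one works at the stabilized iterate $\gamma^{n_j}$, takes the finitely many local components $\mathcal{U}_1^-,\dots,\mathcal{U}_r^-$ of $\mathcal{U}\cap\{\mathbb{S}<n_jc\}$, and fixes paths $\Theta_{\alpha\beta}$ in $\mathcal{U}$ joining representatives $\gamma_\alpha\in\mathcal{U}_\alpha^-$. Bangert's technique then shows that for $n/n_j$ large the \emph{iterated} paths $\psi^{n/n_j}\circ\Theta_{\alpha\beta}$ are homotopic rel endpoints to paths lying entirely in $\{\mathbb{S}<nc\}$. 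This is precisely what forces all of $\psi^{n/n_j}(\mathcal{U}^-)$ into a single component of $\{\mathbb{S}<nc\}$. Only \emph{after} this is established does the Gromoll--Meyer retraction of Lemma~\ref{l:Gromoll-Meyer} do its job: it pushes the excursion of any path $\Gamma$ through $\mathcal{W}$ down onto $\psi^{n/n_j}(\mathcal{U}')$ without increasing $\mathbb{S}$, landing in $\psi^{n/n_j}(\mathcal{U}^-)$, which is now known to sit in one global component. You mention Bangert, but you misidentify what his argument does --- it is not about ``controlling accumulating excursions of a path into $\mathcal{W}$''; it is a homotopy in loop space that lowers the maximum of $\mathbb{S}$ along iterated paths, and without it the index-zero case does not close.
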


\begin{proof}
Assume first that the mean Morse index $\overline{\mathrm{ind}}(\gamma)$ is positive. In this case, for $n$ large enough the Morse index $\ind(\gamma^n)$ is larger than one. By Lemma~\ref{l:critical_sublevel}, the critical circle $\T\cdot\gamma^n$ has an open neighborhood $\mathcal{W}$ whose intersection with the sublevel set $\{\mathbb{S}<nc\}$ is connected, and our theorem readily follows. 

Now,  assume that the mean Morse index $\overline{\mathrm{ind}}(\gamma)$ is zero. By Proposition~\ref{p:iterated_Morse_index}, this is equivalent to the fact that the Morse index of $\gamma^n$  is zero for all $n\in\N$. Lemma~\ref{l:iterated_nullity} further implies that there exists a partition $\N=\N_1\cup...\cup\N_k$, integers $n_1\in\N_1$, ..., $n_k\in\N_k$, and $\nu_1,...,\nu_k\in\{0,...,2d-2\}$ with the following property: The integers in $\N_i$ are all multiples of $n_i$, and for all $n\in\N_i$ the critical point $\gamma^n$ has nullity $\nu_i$. 

Let us choose, once and for all, $i\in\{1,...,k\}$ and consider integers $n$ belonging to $\N_i$. Since $\T\cdot\gamma^{n_i}$ is an isolated critical circle of $\mathbb{S}$, by Lemma~\ref{l:critical_sublevel} it has a connected open neighborhood $\mathcal{U}$ such that the open subset $\mathcal{U}^-:=\mathcal{U}\cap\{\mathbb{S}<n_ic\}$ has finitely many connected components $\mathcal{U}_1^-,...,\mathcal{U}_r^-$.  For each $\alpha\in\{1,...,r\}$ we fix an arbitrary point $\gamma_\alpha\in \mathcal{U}_\alpha^-$, and for each pair of distinct $\alpha,\beta\in\{1,...,r\}$ we fix a continuous path
$\Theta_{\alpha\beta}:[-1,1]\to \mathcal{U}$
joining $\gamma_\alpha$ and $\gamma_\beta$. By Bangert's technique of ``pulling one loop at a time'' (see \cite[pages~86--87]{ban80} or \cite[page~421]{abb13}), for all sufficiently large multiples $n$ of $n_i$, each iterated path $\psi^{n/n_i}\circ\Theta_{\alpha\beta}$ is homotopic with fixed endpoints to a path entirely contained in the sublevel set $\{\mathbb{S}<nc\}$. In other words, $\psi^{n/n_i}(\mathcal{U}^-)$ is contained in a path-connected component of $\{\mathbb{S}<nc\}$ provided $n\in\N_i$ is larger than some number $\overline{n}_i$.

Let us consider an integer $n\in\N_i$ larger than the constant $\overline{n}_i$. By Lemma~\ref{l:Gromoll-Meyer}, if we choose a sufficiently small neighborhood $\mathcal{U}'\subset \mathcal{U}$ of the critical circle $\T\cdot\psi^{n_i}(\gamma)$, its image $\psi^{n/n_i}(\mathcal{U}')$ has a tubular neighborhood $\mathcal{V}$ with an associated deformation retraction $r_t:\mathcal{V}\to \mathcal{V}$ such that $r_0=\mathrm{id}$, $r_1(\mathcal{V})=\psi^{n/n_i}(\mathcal{U}')$, and $\tfrac{d }{d  t} \mathbb{S}\circ r_t\leq0$. We set $\mathcal{W}$ to be an open neighborhood of $\T\cdot\gamma^n$ that is small enough so that its closure is contained in $\mathcal{V}$. Consider two points $\gamma_0, \gamma_1\in\{\mathbb{S}<nc\}$ as in the statement, and  a continuous path $\Gamma:[0,1]\to\{\mathbb{S}<nc\}\cup \mathcal{W}$ joining them. We denote by $s_0$ and $s_1$ respectively the infimum and the supremum of all $s\in[0,1]$ such that $\Gamma(s)\in \mathcal{W}$. The points $\Gamma(s_0)$ and $\Gamma(s_1)$ lie in the intersection $\mathcal{V}\cap\{\mathbb{S}<nc\}$. Since the  deformation retraction $r_t$ does not increase the free-period action functional $\mathbb{S}$, the points $\Gamma(s_0)$ and $r_1(\Gamma(s_0))$ are contained in the same connected component of the sublevel set $\{\mathbb{S}<nc\}$. Moreover, $r_1(\Gamma(s_0))$ is contained in $\psi^{n/n_i}(\mathcal{U}^-)$. Analogously, $\Gamma(s_1)$ and $r_1(\Gamma(s_1))$ are contained in the same connected component of $\{\mathbb{S}<nc\}$, and $r_1(\Gamma(s_1))$ lies in $\psi^{n/n_i}(\mathcal{U}^-)$. Since, as we proved in the previous paragraph of the proof, $\psi^{n/n_i}(\mathcal{U}^-)$ is contained in a connected component of  $\{\mathbb{S}<nc\}$, we conclude that $\gamma_0$ and $\gamma_1$ belong to the same connected component of $\{\mathbb{S}<nc\}$.
\end{proof}

Theorem~\ref{t:iterated_mountain_pass} should be compared to \cite[Theorem 2]{ban80}, which gives a similar statement for isolated closed Riemannian geodesics on surfaces and is proved using geometric arguments. A rather immediate corollary  is the following generalization of the waist theorem of Bangert.

\begin{cor}\label{c:waist}
Assume that $M$ is an orientable surface, that the free-period action functional $\mathbb{S}$ satisfies the Palais-Smale condition, and that it admits a local minimum $\gamma$ with action $\mathbb{S}(\gamma)\neq0$ whose critical circle $\T\cdot\gamma$ is not the whole set of global minima of $\mathbb{S}$ in its connected component. Then, there are infinitely many periodic orbits with energy $0$.
\end{cor}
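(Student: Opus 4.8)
\textbf{Proof proposal for Corollary~\ref{c:waist}.}
The plan is to exploit the local minimizer $\gamma$ exactly as the ``waist'' is used in Bangert's argument, combining the mountain-pass scheme for the free-period action functional with Theorem~\ref{t:iterated_mountain_pass}. First I would observe that, since $\gamma$ is a local minimum of $\mathbb{S}$, so is each iterate $\gamma^n$ (the second variation of $\mathbb{S}$ at $\gamma^n$ restricted to the image of the iteration map $\psi^n$ is $n$ times that of $\gamma$, and on the orthogonal complement~\eqref{ortho} one checks directly that $\mathbb{S}$ does not decrease, so $\mathrm{ind}(\gamma^n)=0$ for all $n$; this also makes $\overline{\mathrm{ind}}(\gamma)=0$). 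Since $\mathbb{S}(\gamma^n)=n\,\mathbb{S}(\gamma)\to\pm\infty$, and since by hypothesis $\T\cdot\gamma$ is not the whole minimum set of $\mathbb{S}$ in its connected component, there is a point $\gamma'$ in that component with $\mathbb{S}(\gamma')<\mathbb{S}(\gamma)$ (if $\mathbb{S}(\gamma)<0$ this is automatic from unboundedness below; in general one argues from the failure of $\T\cdot\gamma$ to exhaust the minima). Passing to iterates, for $n$ large we get $\gamma^n$ a strict local minimum that is \emph{not} a global minimum of $\mathbb{S}$ on its connected component, so there is a second point $\beta_n$ in the same component with $\mathbb{S}(\beta_n)<\mathbb{S}(\gamma^n)$.

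Next I would set up the mountain pass. For each large $n$, let $c_n$ be the minimax value over all continuous paths in $\mathcal{M}$ joining $\gamma^n$ to $\beta_n$ within the connected component, i.e.\ $c_n=\inf_{\Gamma}\max_{s}\mathbb{S}(\Gamma(s))$; since $\gamma^n$ is an isolated strict local minimum separating $\gamma^n$ from $\beta_n$ in its sublevel set, the standard mountain pass geometry gives $c_n>\mathbb{S}(\gamma^n)$. Because $\mathbb{S}$ is assumed to satisfy the Palais--Smale condition, the Mountain Pass Theorem produces a critical orbit $\T\cdot\delta_n$ with $\mathbb{S}(\delta_n)=c_n$ which is of mountain-pass type relative to the component. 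The key point is then the dichotomy: if there were only finitely many periodic orbits with energy $0$, then all their iterates would lie in isolated critical circles, so Theorem~\ref{t:iterated_mountain_pass} applies — for each such orbit and each $n$ sufficiently large, a neighborhood $\mathcal{W}$ of $\T\cdot(\,\cdot\,)^n$ cannot glue distinct components of the sublevel set $\{\mathbb{S}<nc_n\}$ at the critical level. Feeding this into the mountain-pass characterization shows that the mountain-pass critical point $\delta_n$ at level $c_n$ cannot be (an iterate landing in) any of these finitely many orbits once $n$ is large, which is the desired contradiction.

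More carefully, the mechanism is the one from \cite{amp13}: the mountain-pass critical orbit at level $c_n$ has the property that, in every neighborhood, its presence is needed to connect the two sublevel components containing $\gamma^n$ and $\beta_n$; that is precisely the property Theorem~\ref{t:iterated_mountain_pass} forbids for a high iterate of a fixed periodic orbit. So I would argue by contradiction: suppose there are only finitely many geometrically distinct periodic orbits $\eta_1,\dots,\eta_m$ of energy $0$; every critical circle of $\mathbb{S}$ is then an iterate $\T\cdot\eta_j^{\ell}$ and all of these are isolated. Apply Theorem~\ref{t:iterated_mountain_pass} to each $\eta_j$ to obtain a threshold $N_j$ and, for $n\geq N_j$, a neighborhood $\mathcal{W}_{j,n}$ of $\T\cdot\eta_j^{n}$ as in the theorem. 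For $n$ larger than all the $N_j$ and larger than the thresholds coming from the waist argument, the mountain-pass critical point $\delta_n$ at level $c_n$ must coincide with some $\eta_j^{n/(\text{something})}$; but then the theorem applied to $\eta_j$ at the appropriate iterate says a small neighborhood of that critical circle does not increase the number of connected components of $\{\mathbb{S}<c_n\}$, contradicting the fact that $\delta_n$ is a genuine mountain pass separating $\gamma^n$ from $\beta_n$ inside the component. Hence $m=\infty$.

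\textbf{Main obstacle.} The delicate step is the bookkeeping that makes the ``for all large $n$'' in Theorem~\ref{t:iterated_mountain_pass} compatible with the changing base level $c_n$ and with the fact that a mountain-pass critical circle at level $c_n$ might a priori be an iterate of some $\eta_j$ with a small (bounded) iteration exponent, not itself a high iterate — so one must track which critical circles can occur at level $c_n$ and show they are forced to be high iterates (e.g.\ by the growth of action $n\,\mathbb{S}(\gamma)$ versus the bounded action of the finitely many prime orbits, or by a direct argument that the relevant critical circle lives in the connected component and at an action comparable to $n$). Equally, one must make sure the mountain-pass value $c_n$ really equals $n$ times a fixed mountain-pass value (by $\N$-equivariance of $\mathbb{S}$ and of the iteration map, $c_n$ is at most $n\,c_{1}$ for a fixed $c_1$, and the lower bound $c_n>n\,\mathbb{S}(\gamma)$ is clear), so that ``$\{\mathbb{S}<nc\}$'' in Theorem~\ref{t:iterated_mountain_pass} is exactly the sublevel set relevant to the $n$-th mountain pass. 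Once these compatibility points are pinned down, the contradiction is immediate and the corollary follows; this is essentially Bangert's waist argument with Theorem~\ref{t:iterated_mountain_pass} substituting for his geometric Theorem~2 in \cite{ban80}.
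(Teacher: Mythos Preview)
Your outline is exactly the route the paper intends --- the authors themselves give no proof, saying only that it ``is a minor variation of the argument in Section~\ref{s:proof}'', and your contradiction scheme (mountain pass over iterates of $\gamma$, then Theorem~\ref{t:iterated_mountain_pass} to rule out that the resulting critical circles are high iterates of finitely many primes) is precisely that variation.

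The one step that needs to be made precise is the one you yourself flag in the ``Main obstacle'': forcing the critical circle at level $c_n$ to be a \emph{high} iterate of some $\eta_j$. Your proposed bound $c_n\leq n\,c_1$ from $\N$-equivariance is correct but too weak when $\mathbb{S}(\gamma)<0$, since $c_1$ may well be positive and then nothing prevents $c_n$ from staying in a bounded range. What is actually needed is the Bangert-type sublinear estimate recorded in the lemma just before~\eqref{neg} (the ``pulling one loop at a time'' trick from \cite{ban80}): for paths from $\gamma^n$ to $\mu^n$ it gives
\[
c_n \;\leq\; n\,\max\{\mathbb{S}(\gamma),\mathbb{S}(\mu)\} + A \;=\; n\,\mathbb{S}(\gamma)+A
\]
for a constant $A$ independent of $n$. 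Combined with the strict-minimum bound $c_n>n\,\mathbb{S}(\gamma)$, this traps $c_n$ in a window of fixed width around $n\,\mathbb{S}(\gamma)$, so $|c_n|\to\infty$; here is exactly where the hypothesis $\mathbb{S}(\gamma)\neq 0$ enters. Then any $\eta_j^{m}$ at level $c_n$ satisfies $|m\,\mathbb{S}(\eta_j)|=|c_n|\to\infty$, forcing $m\to\infty$ (orbits with $\mathbb{S}(\eta_j)=0$ cannot occur at a nonzero level), and Theorem~\ref{t:iterated_mountain_pass} then applies verbatim as in Section~\ref{s:proof}. A minor side remark: the persistence of local minimality under iteration on an orientable surface is not the direct second-variation computation you sketch; it should be quoted from \cite[Lemma~3.1]{amp13}.
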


The proof of this corollary is a minor variation of the argument in Section~\ref{s:proof}, and we leave its details to the reader. Notice that, if the Ma\~n\'e critical value of the universal cover $c_u$ is negative, the free-period action functional $\mathbb{S}$ satisfies the Palais-Smale condition (see  \cite[Lemmata~5.1 and~5.4]{abb13}). Moreover, in this situation, any contractible local minimum of $\mathbb{S}$ has positive action, and  it is never a global minimum. In particular, the existence of one such minimum is enough to infer the existence of infinitely many other periodic orbits on the energy hypersurface. 

\begin{rem}
\label{waistrem}
We expect Corollary~\ref{c:waist} to hold  for more general Lagrangians $L$, such as the square of a Finsler norm. In the latter case, the  functional $\mathbb{S}$ is not twice differentiable, but our proof makes a minimal use of higher regularity and it should be possible to  adapt it to this situation. As already mentioned in the introduction, this would permit to generalize the waist theorem of Bangert to Finsler metrics on $S^2$. We also expect Corollary~\ref{c:waist} to hold for some non-exact magnetic flows, for instance in case $M$ is a surface of higher genus, for which  a suitable free-time action functional is still available.
\end{rem}

\section{The minimax argument}\label{minmaxarg}

\subsection{The sequence of minimax functions}\label{minimaxsec}

Throughout Section~\ref{minmaxarg} we will assume our closed surface $M$ to be orientable. This is not a restrictive assumption for us: If $M$ is non-orientable, we can replace it by its orientation double cover and work there. In fact, the existence of infinitely many periodic orbits for the orientation double cover of $M$ clearly implies the same result for $M$.

We take the energy $\kappa$ into account, by considering the one-parameter family of functionals $\mathbb{S}_{\kappa} : \mathcal{M}\rightarrow \R$ given by
\[
\mathbb{S}_{\kappa}(x,T) := T \int_{\T} \bigl( L(x(s),\dot{x}(s)/T) + \kappa \bigr)\, ds.
\]
Critical points of $\mathbb{S}_{\kappa}$ are in one-to-one correspondence with periodic orbits of energy $\kappa$: More precisely, a critical point $(x,T)$ is associated to the $T$-periodic orbit $\gamma(t)=x(t/T)$ which has energy $E(\gamma,\dot\gamma)=\kappa$. Notice that $T$ is not necessarily the minimal period of $\gamma$.

In the remaining part of the paper we endow $\mathcal{M} = H^1(\T,M) \times (0,+\infty)$ with the standard product metric as in Section~\ref{s:basic_properties} and with the induced distance. 

For every $\kappa\in (0,c_0)$ the functional $\mathbb{S}_{\kappa}$ has a local minimizer $\alpha_{\kappa}$ with
\[
\mathbb{S}_{\kappa}(\alpha_{\kappa}) <0.
\] 
The proof of this fact is contained in  \cite[Lemma 3.2]{amp13} and  builds on previous results by Taimanov \cite{tai92b, tai92c,tai92} and Contreras, Macarini and Paternain \cite{cmp04}. Since $M$ is an orientable surface, every iteration $\alpha_{\kappa}^n$ of $\alpha_{\kappa}$ remains a local minimizer of $\mathbb{S}_{\kappa}$, see \cite[Lemma 4.1]{amp13}. Moreover, if the local minimizer $\alpha_{\kappa}$ is strict, meaning that
\[
\mathbb{S}_{\kappa}(\gamma) > \mathbb{S}_{\kappa}(\alpha_{\kappa}), \qquad \forall \gamma \in \mathcal{U} \setminus \bigl( \T\cdot \alpha_{\kappa} \bigr),
\]
for some neighborhood $\mathcal{U}$ of the critical circle $\T\cdot \alpha_{\kappa}$, so are all the iterates $\alpha_{\kappa}^n$.

Fix some $\kappa_*$ in the interval $(0,c_u)$ such that $\alpha_{\kappa_*}$ is a strict local minimizer of $\mathbb{S}_{\kappa_*}$. Since $\kappa_*$ is strictly smaller than $c_u$, the infimum of $\mathbb{S}_{\kappa_*}$ over all contractible closed curves is $-\infty$, and hence we can find an element $\mu\in \mathcal{M}$ in the same free homotopy class of $\alpha_{\kappa_*}$ such that 
\[
\mathbb{S}_{\kappa_*} (\mu) < \mathbb{S}_{\kappa_*}(\alpha_{\kappa_*}).
\]
Choose a bounded open neighborhood $\mathcal{U}$ of $\T\cdot \alpha_{\kappa_*}$ whose closure intersects the critical set of $\mathbb{S}_{\kappa_*}$ only in $\T\cdot \alpha_{\kappa_*}$, such that
\[
\inf_{\partial \mathcal{U}} \mathbb{S}_{\kappa_*} > \mathbb{S}_{\kappa_*} (\alpha_{\kappa_*}),
\]
and such that $T$ is bounded away from zero for $(x,T)\in \mathcal{U}$.
The existence of such a neighborhood is an easy consequence of the fact that $\mathbb{S}_{\kappa_*}$ satisfies the Palais-Smale condition on bounded subsets on which $T$ is bounded away from zero, see \cite[Lemma 5.3]{amp13}. We denote by $M_{\kappa}$ the closure of the set of local minimizers of $\mathbb{S}_{\kappa}$ which belong to $\mathcal{U}$. Such a set consists of critical points of $\mathbb{S}_{\kappa}$, but in general may contain critical points which are not local minimizers. If $M_{\kappa}$ is a a finite union of critical circles, then all its elements are strict local minimizers.

\begin{lem} 
\label{LJ}
There exists a closed interval $J=J(\kappa_*)\subset (0,c_u)$ whose interior part contains $\kappa_*$ and which has the following properties:
\begin{enumerate}
\item For every $\kappa\in J$ the set $M_{\kappa}$ is a non-empty compact set.
\item For every $\kappa\in J$ there holds
\[
\mathbb{S}_{\kappa}(\mu) < \min_{M_{\kappa}} \mathbb{S}_{\kappa}.
\]
\item For every $\kappa\in J$ there holds
\[
\sup_{\kappa'\in J } \max_{M_{\kappa'}} \mathbb{S}_{\kappa} < \min\bigl\{ \inf_{\partial \mathcal{U}} \mathbb{S}_{\kappa}, 0\bigr\}.
\]
\end{enumerate}
\end{lem}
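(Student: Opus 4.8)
The plan is to exploit two features of the family $\mathbb{S}_\kappa$. First, the dependence on $\kappa$ is \emph{affine}: if $\mathsf{T}:\mathcal{M}\to(0,+\infty)$ denotes the period function $\mathsf{T}(x,T)=T$, then $\mathbb{S}_\kappa=\mathbb{S}_{\kappa'}+(\kappa-\kappa')\mathsf{T}$ for all $\kappa,\kappa'$, so that $d\mathbb{S}_\kappa=d\mathbb{S}_{\kappa'}+(\kappa-\kappa')\,d\mathsf{T}$, and since $d\mathsf{T}$ has constant norm $1$ in the standard metric of $\mathcal{M}$ we get, for every $\gamma=(x,T)\in\mathcal{M}$,
\[
|\mathbb{S}_\kappa(\gamma)-\mathbb{S}_{\kappa'}(\gamma)|=|\kappa-\kappa'|\,T,\qquad \|d\mathbb{S}_\kappa(\gamma)-d\mathbb{S}_{\kappa'}(\gamma)\|=|\kappa-\kappa'|.
\]
Second, on the bounded set $\overline{\mathcal{U}}$, on which $\mathsf{T}$ is bounded and bounded away from $0$, the $\mathbb{S}_\kappa$-action of any element is automatically bounded, so $\mathbb{S}_\kappa$ satisfies the Palais--Smale condition there. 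Set $a:=\mathbb{S}_{\kappa_*}(\alpha_{\kappa_*})<0$ and $b:=\min\{\inf_{\partial\mathcal{U}}\mathbb{S}_{\kappa_*},0\}$, so that $a<b\le0$ by the choice of $\mathcal{U}$. Since $\overline{\mathcal{U}}$ meets $\crit(\mathbb{S}_{\kappa_*})$ only in the critical circle $\T\cdot\alpha_{\kappa_*}$, and every point of that circle is a strict local minimizer, the set of local minimizers of $\mathbb{S}_{\kappa_*}$ inside $\mathcal{U}$ equals $\T\cdot\alpha_{\kappa_*}$; hence $M_{\kappa_*}=\T\cdot\alpha_{\kappa_*}$ and $\min_{M_{\kappa_*}}\mathbb{S}_{\kappa_*}=\max_{M_{\kappa_*}}\mathbb{S}_{\kappa_*}=a$. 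Thus (i)--(iii) hold trivially at $\kappa=\kappa_*$, and the task reduces to showing they persist for $\kappa$ close to $\kappa_*$. I will do this by first establishing an upper-semicontinuity/compactness property of $\kappa\mapsto M_\kappa$ and then deriving (i)--(iii) from soft continuity estimates.

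\textbf{Localization and compactness of $M_\kappa$.} The key claim is: for every open neighbourhood $\mathcal{U}'$ of $\T\cdot\alpha_{\kappa_*}$ there is $\epsilon(\mathcal{U}')>0$ such that $\crit(\mathbb{S}_\kappa)\cap\overline{\mathcal{U}}\subset\mathcal{U}'$ — in particular $M_\kappa\subset\mathcal{U}'$ — whenever $|\kappa-\kappa_*|<\epsilon(\mathcal{U}')$. Suppose not: then there are $\kappa_h\to\kappa_*$ and $\gamma_h\in(\overline{\mathcal{U}}\setminus\mathcal{U}')\cap\crit(\mathbb{S}_{\kappa_h})$. From $d\mathbb{S}_{\kappa_h}(\gamma_h)=0$ and the displayed identities we get $\|d\mathbb{S}_{\kappa_*}(\gamma_h)\|=|\kappa_*-\kappa_h|\to0$, while $\sup_h|\mathbb{S}_{\kappa_*}(\gamma_h)|<\infty$ because $\overline{\mathcal{U}}$ is bounded with $\mathsf{T}$ bounded away from $0$. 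Hence $\{\gamma_h\}$ is a Palais--Smale sequence for $\mathbb{S}_{\kappa_*}$ in $\overline{\mathcal{U}}$, and by the Palais--Smale condition a subsequence converges to some $\gamma_\infty\in\crit(\mathbb{S}_{\kappa_*})\cap(\overline{\mathcal{U}}\setminus\mathcal{U}')=\varnothing$, a contradiction. The same Palais--Smale argument, applied to a sequence drawn from $M_\kappa$ itself (all of whose points are critical points of $\mathbb{S}_\kappa$ lying in $\overline{\mathcal{U}}$), shows that $M_\kappa$, which is closed by definition, is compact. This gives the compactness half of (i).

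\textbf{Non-emptiness of $M_\kappa$.} Write $m_\kappa:=\inf_{\overline{\mathcal{U}}}\mathbb{S}_\kappa$. As $\kappa\to\kappa_*$ we have $m_\kappa\le\mathbb{S}_\kappa(\alpha_{\kappa_*})\to a$ and $\inf_{\partial\mathcal{U}}\mathbb{S}_\kappa\to\inf_{\partial\mathcal{U}}\mathbb{S}_{\kappa_*}>a$, so $m_\kappa<\inf_{\partial\mathcal{U}}\mathbb{S}_\kappa$ once $|\kappa-\kappa_*|$ is small. For such $\kappa$ a minimizing sequence for $\mathbb{S}_\kappa$ on $\overline{\mathcal{U}}$ eventually lies in $\mathcal{U}$, bounded away (in action) from $\partial\mathcal{U}$; applying Ekeland's variational principle on the complete metric space $\overline{\mathcal{U}}$ yields a Palais--Smale sequence for $\mathbb{S}_\kappa$, which by the Palais--Smale condition converges to some $\gamma_\infty\in\mathcal{U}$ attaining $m_\kappa$. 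Since $\gamma_\infty$ minimizes $\mathbb{S}_\kappa$ over the neighbourhood $\mathcal{U}$ of itself, it is a local minimizer, so $M_\kappa\neq\varnothing$ and (i) is complete.

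\textbf{Choice of $J$ and conclusion.} Put $g:=\min\{\,b-a,\ a-\mathbb{S}_{\kappa_*}(\mu)\,\}$, which is positive since $a<b$ and $\mathbb{S}_{\kappa_*}(\mu)<a$. Choose an open neighbourhood $\mathcal{U}'\subset\mathcal{U}$ of $\T\cdot\alpha_{\kappa_*}$ so small that
\[
a-\tfrac1{16}g<\inf_{\mathcal{U}'}\mathbb{S}_{\kappa_*}\le\sup_{\mathcal{U}'}\mathbb{S}_{\kappa_*}<a+\tfrac1{16}g
\]
(possible because $\mathbb{S}_{\kappa_*}\equiv a$ on $\T\cdot\alpha_{\kappa_*}$), and then $\eta\in(0,\epsilon(\mathcal{U}')]$ so small that $J:=[\kappa_*-\eta,\kappa_*+\eta]\subset(0,c_u)$, the minimization argument above applies for every $\kappa\in J$, and $\eta\,(T_\mu+\sup_{\overline{\mathcal{U}}}\mathsf{T})<\tfrac1{16}g$. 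Then for all $\kappa,\kappa'\in J$ the localization gives $M_{\kappa'}\subset\mathcal{U}'$, and the displayed estimates yield
\[
\max_{M_{\kappa'}}\mathbb{S}_\kappa\le\sup_{\mathcal{U}'}\mathbb{S}_\kappa<a+\tfrac18g,\qquad \min_{M_\kappa}\mathbb{S}_\kappa\ge\inf_{\mathcal{U}'}\mathbb{S}_\kappa>a-\tfrac18g,
\]
while $\mathbb{S}_\kappa(\mu)<\mathbb{S}_{\kappa_*}(\mu)+\tfrac1{16}g\le a-\tfrac{15}{16}g$ and $\min\{\inf_{\partial\mathcal{U}}\mathbb{S}_\kappa,0\}\ge b-\tfrac1{16}g$ (using $\inf_{\partial\mathcal{U}}\mathbb{S}_{\kappa_*}\ge b$ and $b\le0$). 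Since $g\le a-\mathbb{S}_{\kappa_*}(\mu)$ and $g\le b-a$, these give $\mathbb{S}_\kappa(\mu)<a-\tfrac18g<\min_{M_\kappa}\mathbb{S}_\kappa$, which is (ii), and $\sup_{\kappa'\in J}\max_{M_{\kappa'}}\mathbb{S}_\kappa\le a+\tfrac18g<b-\tfrac1{16}g\le\min\{\inf_{\partial\mathcal{U}}\mathbb{S}_\kappa,0\}$, which is (iii). I expect the real obstacle to be the localization step: once $M_\kappa$ is trapped in an arbitrarily small neighbourhood of $\T\cdot\alpha_{\kappa_*}$, (ii) and (iii) follow at once from the uniform continuity of $(\gamma,\kappa)\mapsto\mathbb{S}_\kappa(\gamma)$ on $\overline{\mathcal{U}}$, whereas that localization rests squarely on the Palais--Smale condition combined with the affine dependence on $\kappa$.
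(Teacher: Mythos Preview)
Your proof is correct and follows essentially the same approach as the paper's: both exploit the affine dependence $\mathbb{S}_\kappa=\mathbb{S}_{\kappa_*}+(\kappa-\kappa_*)\mathsf{T}$ together with the Palais--Smale condition on $\overline{\mathcal{U}}$ to trap $\crit(\mathbb{S}_\kappa)\cap\overline{\mathcal{U}}$ (hence $M_\kappa$) in an arbitrarily small neighbourhood $\mathcal{U}'$ of $\T\cdot\alpha_{\kappa_*}$, and then read off (i)--(iii) from uniform $C^0$ bounds on $\mathbb{S}_\kappa$ over $\mathcal{U}'$, $\partial\mathcal{U}$, and $\{\mu\}$. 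The only cosmetic differences are that you phrase the localization as a contradiction argument (the paper gives a direct lower bound $\|d\mathbb{S}_\kappa\|\ge\delta/2$ on $\mathcal{U}\setminus\mathcal{U}'$), and you invoke Ekeland's principle explicitly for the existence of a minimizer where the paper simply cites the Palais--Smale condition.
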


\begin{proof}
This lemma is an easy consequence of the fact that on bounded subsets of $\mathcal{M}$ the family of functionals $\mathbb{S}_{\kappa}$ converges to $\mathbb{S}_{\kappa_*}$ in the $C^1$ norm  for $\kappa\rightarrow \kappa_*$, because
\[
\mathbb{S}_{\kappa}(x,T) - \mathbb{S}_{\kappa_*}(x,T) = (\kappa-\kappa_*)T,
\]
and of the fact that $\mathbb{S}_{\kappa}$ satisfies the Palais-Smale condition on $\mathcal{U}$. Indeed, fix numbers $A_0,A_1,A_2,A_3 $ and $A_4$ such that
\[
\mathbb{S}_{\kappa_*}(\mu) < A_0 < A_1 < \mathbb{S}_{\kappa_*}(\alpha_{\kappa_*}) < A_2 < A_3 < A_4 < A_5:= \min\bigl\{ \inf_{\partial \mathcal{U}} \mathbb{S}_{\kappa_*},0\bigr\}.
\]
Let $\mathcal{U}'\subset \mathcal{U}$ be a neighborhood of $\T\cdot \alpha_{\kappa_*}$ such that $\mathbb{S}_{\kappa_*}(\mathcal{U}') \subset (A_1,A_2)$. Since the closure of the bounded set $\mathcal{U}\setminus \mathcal{U}'$ does not contain critical points of $\mathbb{S}_{\kappa_*}$, by the Palais-Smale condition there exists a positive number $\delta$ such that
\[
\|d\mathbb{S}_{\kappa_*}\| \geq \delta \qquad \mbox{on} \quad \mathcal{U}\setminus \mathcal{U}'.
\]
By the $C^1$ convergence of $\mathbb{S}_{\kappa}|_{\mathcal{U}}$ to $\mathbb{S}_{\kappa_*}|_{\mathcal{U}}$, we can find a neighborhood $J_0\subset (0,c_u)$ of $\kappa_*$ such that
\[
\|d\mathbb{S}_{\kappa}\| \geq \delta/2 \qquad \mbox{on} \quad \mathcal{U}\setminus \mathcal{U}', \; \forall \kappa\in J_0.
\]
In particular, the critical set of $\mathbb{S}_{\kappa}|_{\mathcal{U}}$ is contained in $\mathcal{U}'$ for every $\kappa\in J_0$, and so is its subset $M_{\kappa}$. Since $\mathbb{S}_{\kappa_*}(\mu) < A_0$, $\mathbb{S}_{\kappa_*}(\mathcal{U}')\subset (A_1,A_2)$ and $\mathbb{S}_{\kappa_*}(\partial \mathcal{U})\subset [A_5,+\infty)$, we can find a closed interval $J\subset J_0$ which is a neighborhood of $\kappa_*$ such that
\begin{equation}
\label{livelli}
\mathbb{S}_{\kappa}(\mu) < A_0, \qquad
\mathbb{S}_{\kappa}(\mathcal{U}') \subset (A_0,A_3), \qquad \mathbb{S}_{\kappa}(\partial \mathcal{U}) \subset (A_4,+\infty), \qquad \forall \kappa \in J.
\end{equation}
The interval $J$ satisfies the required properties. Indeed, by (\ref{livelli}) the infimum of $\mathbb{S}_{\kappa}$ on $\mathcal{U}$ is strictly smaller than its infimum on $\partial \mathcal{U}$ and hence, by the Palais-Smale condition, $\mathbb{S}_{\kappa}|_{\mathcal{U}}$ has a global minimizer. Therefore, $M_{\kappa}$ is not empty and, again by the Palais-Smale condition, compact. So (i) holds. Since $M_{\kappa}$ is contained in $\mathcal{U}'$, the minimum of $\mathbb{S}_{\kappa}$ on $M_{\kappa}$ is greater than $A_0$, which is greater than $S_{\kappa}(\mu)$, by (\ref{livelli}), proving (ii). Finally, (\ref{livelli}) implies that
\[
\sup_{\kappa'\in J } \max_{M_{\kappa'}} \mathbb{S}_{\kappa} \leq \sup_{\mathcal{U}'} \mathbb{S}_{\kappa} \leq A_3 < A_4 \leq \min\left\{ \inf_{\partial \mathcal{U}} \mathbb{S}_{\kappa},0\right\}, \qquad \forall \kappa\in J,
\]
which proves (iii).
\end{proof}

Property (iii) of the above lemma is used in the following:

\begin{lem}
\label{montecarlo}
Let $\kappa_0<\kappa_1$ be in $J$. For every $\alpha \in M_{\kappa_1}$ there exists a continuous path $w:[0,1]\rightarrow \mathcal{U}$ such that $w(0)\in M_{\kappa_0}$, $w(1)=\alpha$ and $\mathbb{S}_{\kappa_0}\circ w \leq \mathbb{S}_{\kappa_0}(\alpha)$.
\end{lem}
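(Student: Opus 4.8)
The plan is to construct $w$ by running the negative gradient flow of $\mathbb{S}_{\kappa_0}$ downhill from $\alpha$ and landing on a local minimizer of $\mathbb{S}_{\kappa_0}$. First I would record the elementary but decisive identity $\mathbb{S}_{\kappa_0}(x,T)=\mathbb{S}_{\kappa_1}(x,T)-(\kappa_1-\kappa_0)T<\mathbb{S}_{\kappa_1}(x,T)$, valid on all of $\mathcal{M}$ since $\kappa_0<\kappa_1$ and $T>0$. Combining this with property (iii) of Lemma~\ref{LJ} applied with the parameter $\kappa_0$, the number $a:=\mathbb{S}_{\kappa_0}(\alpha)$ satisfies $a\le\max_{M_{\kappa_1}}\mathbb{S}_{\kappa_0}<\min\{\inf_{\partial\mathcal{U}}\mathbb{S}_{\kappa_0},0\}$. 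Hence the closed set $\{\mathbb{S}_{\kappa_0}\le a\}\cap\overline{\mathcal{U}}$ is disjoint from $\partial\mathcal{U}$, so it is contained in $\mathcal{U}$; since $\mathcal{U}$ is bounded with $T$ bounded away from $0$, the functional $\mathbb{S}_{\kappa_0}$ is bounded below there and satisfies the Palais--Smale condition on a neighborhood of it. Consequently the negative gradient flow $\phi^{s}$ of $\mathbb{S}_{\kappa_0}$ is defined for all $s\ge0$ on $\Sigma:=\{\mathbb{S}_{\kappa_0}\le a\}\cap\mathcal{U}$, never leaves $\mathcal{U}$ (the action stays $\le a<\inf_{\partial\mathcal{U}}\mathbb{S}_{\kappa_0}$), and does not increase $\mathbb{S}_{\kappa_0}$.

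Next I would let $\mathcal{C}\subset\Sigma$ be the path-component of $\alpha$; it is forward invariant under $\phi^{s}$, and the path we seek is any path in $\mathcal{C}$ joining $\alpha$ to a point of $M_{\kappa_0}$, traversed backwards. Set $m:=\inf_{\mathcal{C}}\mathbb{S}_{\kappa_0}\le a$. A standard deformation argument (using the Palais--Smale condition together with the $\phi^{s}$-invariance of $\mathcal{C}$: for $y\in\mathcal{C}$ one has $\int_0^\infty\|d\mathbb{S}_{\kappa_0}(\phi^s y)\|^2\,ds<\infty$, so $\phi^{s}(y)$ produces Palais--Smale sequences converging to critical points at level $\le\mathbb{S}_{\kappa_0}(y)$) shows that $m$ is a critical value attained at some critical point $\beta^{\ast}$. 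In the generic situation $m<a$ this is clean: a minimizing Palais--Smale sequence in $\mathcal{C}$ subconverges to such a $\beta^{\ast}$, which lies in the open set $\{\mathbb{S}_{\kappa_0}<a\}\cap\mathcal{U}$; a small ball about $\beta^{\ast}$ inside that open set meets $\mathcal{C}$, hence is contained in $\mathcal{C}$, which at once gives both $\beta^{\ast}\in\mathcal{C}$ and $\mathbb{S}_{\kappa_0}\ge m$ near $\beta^{\ast}$, i.e.\ $\beta^{\ast}$ is a local minimizer of $\mathbb{S}_{\kappa_0}$ belonging to $\mathcal{U}$, so $\beta^{\ast}\in M_{\kappa_0}$. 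Joining $\alpha$ to $\beta^{\ast}$ inside the path-connected set $\mathcal{C}$ and reversing yields $w$ with $w(0)=\beta^{\ast}\in M_{\kappa_0}$, $w(1)=\alpha$, and $\mathbb{S}_{\kappa_0}\circ w\le a$.

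The case that requires more care is $m=a$. Since $\mathbb{S}_{\kappa_0}$ decreases strictly along $\phi^{s}$ off its critical set, $m=a$ forces $\mathbb{S}_{\kappa_0}\equiv a$ on $\mathcal{C}$ and $\alpha\in\crit(\mathbb{S}_{\kappa_0})$; moreover the presence of a Hessian descent direction at $\alpha$ would produce points of $\mathcal{C}$ below level $a$, so necessarily $\ind(\alpha)=0$ for $\mathbb{S}_{\kappa_0}$. If $\alpha$ is transversally non-degenerate it is then a local minimizer of $\mathbb{S}_{\kappa_0}$, and the constant path works. The remaining possibility --- a degenerate critical circle of $\mathbb{S}_{\kappa_0}$ of Morse index $0$ that is \emph{not} a local minimizer --- is the main obstacle, since here neither the gradient flow nor a naive small-ball argument suffices and one must control the local structure of $\{\mathbb{S}_{\kappa_0}\le a\}$ near $\T\cdot\alpha$. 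I would handle it with the generalized Morse (Gromoll--Meyer) splitting lemma, reducing to the finite-dimensional function on the kernel of $d^2\mathbb{S}_{\kappa_0}(\alpha)$ and exhibiting, from the failure of the local-minimum property, a short path starting at $\alpha$ inside $\{\mathbb{S}_{\kappa_0}\le a\}$ to a point of strictly smaller action --- again contradicting $m=a$; alternatively one can first apply the Marino--Prodi perturbation of Lemma~\ref{l:Marino-Prodi} to reduce to non-degenerate critical circles. Once this local analysis is in place, the argument of the previous paragraph applies verbatim.
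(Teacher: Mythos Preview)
Your approach is essentially the paper's: trap $\alpha$ inside the component of the sublevel $\{\mathbb{S}_{\kappa_0}\le a\}\cap\mathcal{U}$ using Lemma~\ref{LJ}(iii), and descend to a local minimizer of $\mathbb{S}_{\kappa_0}$ there. However, you overlook a one-line observation that makes your entire ``$m=a$'' discussion vacuous: since $\alpha\in M_{\kappa_1}$ is (a limit of local minimizers, hence) a critical point of $\mathbb{S}_{\kappa_1}$, it corresponds to a periodic orbit of energy $\kappa_1\neq\kappa_0$, and therefore is \emph{not} a critical point of $\mathbb{S}_{\kappa_0}$. Thus $\alpha$ is a regular point of $\mathbb{S}_{\kappa_0}$ and your gradient flow immediately gives $m<a$; there is no degenerate case to handle, and the Gromoll--Meyer and Marino--Prodi machinery you invoke is unnecessary. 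The paper uses exactly this observation to step directly from $\alpha$ into the open sublevel $\{\mathbb{S}_{\kappa_0}<a\}$ and then find a global minimizer of $\mathbb{S}_{\kappa_0}$ on the resulting connected component, which lies in $M_{\kappa_0}$.
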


\begin{proof}
The element $\alpha$ corresponds to a periodic orbit of energy $\kappa_1$ and in particular is not a critical point of $\mathbb{S}_{\kappa_0}$. Set $a:=\mathbb{S}_{\kappa_0}(\alpha)$. Being a regular point of the hypersurface $\mathbb{S}_{\kappa_0}^{-1}(a)$, $\alpha$ can be connected to a point $\beta \in \mathcal{U}\cap \{\mathbb{S}_{\kappa_0}< a\}$ by a continuous path which is contained in the sublevel set $\{\mathbb{S}_{\kappa_0}\leq a\}$. By Lemma \ref{LJ} (iii),
\[
a=\mathbb{S}_{\kappa_0}(\alpha) < \inf_{\partial \mathcal{U}} \mathbb{S}_{\kappa_0},
\]
and hence the connected component of $\{\mathbb{S}_{\kappa_0}\leq a\}$ which contains $\alpha$ is contained in $\mathcal{U}$. Since $\mathbb{S}_{\kappa_0}$ satisfies the Palais-Smale condition on $\mathcal{U}$, the above fact ensures the existence of a global minimizer $\gamma$ of the restriction of $\mathbb{S}_{\kappa_0}$ to the connected component of $\{\mathbb{S}_{\kappa_0}< a\}$ which contains $\beta$. Such a $\gamma$ belongs to $M_{\kappa_0}$ and can be connected to $\beta$ by a continuous path in $\{\mathbb{S}_{\kappa_0}< a\}$. We conclude that there exists a continuous path
$w:[0,1] \rightarrow \{\mathbb{S}_{\kappa_0}\leq a\}$ 
such that $w(0)=\gamma\in M_{\kappa_0}$ and $w(1)=\alpha$.
\end{proof}

 For every $n\in \N$ and every $\kappa\in J$ we define the set of continuous paths
 \[
 \mathcal{P}_n(\kappa) := \set{u\in C^0([0,1], \mathcal{M})}{u(0)\in \psi^n(M_{\kappa}), \; u(1)=\mu^n},
 \]
 which join the $n$-th iterate of some element in $M_{\kappa}$ with the $n$-th iterate of $\mu$. Correspondingly, we define the minimax value
\[
c_n(\kappa) := \inf_{u\in \mathcal{P}_n(\kappa)} \max_{\sigma\in [0,1]} \mathbb{S}_{\kappa}(u(\sigma)).
\]
By Lemma \ref{LJ} (ii) there holds
\begin{equation}
\label{prima}
c_n(\kappa) \geq \min_{\psi^n(M_{\kappa})} \mathbb{S}_{\kappa} = n \min_{M_{\kappa}} \mathbb{S}_{\kappa} > n \, \mathbb{S}_{\kappa}(\mu).
\end{equation}

\begin{lem}
For every $n\in \N$ the function $c_n$ is monotonically increasing on $J$.
\end{lem}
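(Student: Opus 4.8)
The plan is to prove monotonicity in the form $c_n(\kappa_0)\le c_n(\kappa_1)$ for any $\kappa_0<\kappa_1$ in $J$, by manufacturing, out of an arbitrary competitor path for $c_n(\kappa_1)$, a competitor path for $c_n(\kappa_0)$ whose maximal $\mathbb{S}_{\kappa_0}$-value is no larger than the maximal $\mathbb{S}_{\kappa_1}$-value of the original one. Two elementary inputs are used: the pointwise monotonicity $\mathbb{S}_{\kappa_0}<\mathbb{S}_{\kappa_1}$ on $\mathcal{M}$, which is immediate from $\mathbb{S}_{\kappa_1}(x,T)-\mathbb{S}_{\kappa_0}(x,T)=(\kappa_1-\kappa_0)T>0$; and Lemma~\ref{montecarlo}, which connects $M_{\kappa_1}$ to $M_{\kappa_0}$ inside $\mathcal{U}$ without increasing $\mathbb{S}_{\kappa_0}$.

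First I would fix an arbitrary $u\in\mathcal{P}_n(\kappa_1)$, so that $u(0)=\alpha^n$ for some $\alpha\in M_{\kappa_1}$ and $u(1)=\mu^n$. Applying Lemma~\ref{montecarlo} to $\alpha$ produces a continuous path $w:[0,1]\to\mathcal{U}$ with $w(0)\in M_{\kappa_0}$, $w(1)=\alpha$, and $\mathbb{S}_{\kappa_0}\circ w\le \mathbb{S}_{\kappa_0}(\alpha)$. Using the equivariance $\mathbb{S}_{\kappa}\circ\psi^n=n\,\mathbb{S}_{\kappa}$ (valid for every $\kappa$, by the same computation that gives $\mathbb{S}\circ\psi^n=n\mathbb{S}$, since $\mathbb{S}_{\kappa}$ is the free-period action functional of $L+\kappa$), the iterated path $\psi^n\circ w$ runs from $\psi^n(w(0))\in\psi^n(M_{\kappa_0})$ to $\psi^n(\alpha)=u(0)$ and satisfies $\mathbb{S}_{\kappa_0}(\psi^n(w(t)))\le n\,\mathbb{S}_{\kappa_0}(\alpha)=\mathbb{S}_{\kappa_0}(u(0))$ for every $t\in[0,1]$.

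Next I would let $\tilde u$ be the concatenation of $\psi^n\circ w$ followed by $u$. Then $\tilde u$ is a continuous path in $\mathcal{M}$ with $\tilde u(0)=\psi^n(w(0))\in\psi^n(M_{\kappa_0})$ and $\tilde u(1)=u(1)=\mu^n$, i.e.\ $\tilde u\in\mathcal{P}_n(\kappa_0)$. Along the first piece, $\mathbb{S}_{\kappa_0}(\psi^n(w(t)))\le \mathbb{S}_{\kappa_0}(u(0))\le \mathbb{S}_{\kappa_1}(u(0))\le\max_\sigma\mathbb{S}_{\kappa_1}(u(\sigma))$; along the second piece, $\mathbb{S}_{\kappa_0}(u(\sigma))\le\mathbb{S}_{\kappa_1}(u(\sigma))\le\max_\sigma\mathbb{S}_{\kappa_1}(u(\sigma))$. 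Hence $\max_\sigma\mathbb{S}_{\kappa_0}(\tilde u(\sigma))\le\max_\sigma\mathbb{S}_{\kappa_1}(u(\sigma))$, and since $\tilde u\in\mathcal{P}_n(\kappa_0)$ this gives $c_n(\kappa_0)\le\max_\sigma\mathbb{S}_{\kappa_1}(u(\sigma))$. Taking the infimum over all $u\in\mathcal{P}_n(\kappa_1)$ yields $c_n(\kappa_0)\le c_n(\kappa_1)$.

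I do not expect a genuine obstacle here: the substantive content has already been isolated in Lemma~\ref{montecarlo} (which itself relies on property~(iii) of Lemma~\ref{LJ} to keep the relevant sublevel component inside $\mathcal{U}$), and what remains is a routine path-concatenation argument combined with the pointwise monotonicity of $\kappa\mapsto\mathbb{S}_{\kappa}$. The only points to state carefully are that $\psi^n$ sends $M_{\kappa_0}$ into the prescribed set of admissible starting points of $\mathcal{P}_n(\kappa_0)$ and that the equivariance $\mathbb{S}_{\kappa}\circ\psi^n=n\,\mathbb{S}_{\kappa}$ may be invoked for each fixed $\kappa$ — both are built into the definitions of $\mathcal{P}_n(\kappa)$ and of $c_n$.
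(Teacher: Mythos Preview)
Your proposal is correct and follows essentially the same argument as the paper: take an arbitrary $u\in\mathcal{P}_n(\kappa_1)$, use Lemma~\ref{montecarlo} (iterated via $\psi^n$) to prepend a path from $\psi^n(M_{\kappa_0})$ to $u(0)$ lying in $\{\mathbb{S}_{\kappa_0}\le\mathbb{S}_{\kappa_0}(u(0))\}$, and then use the pointwise monotonicity $\mathbb{S}_{\kappa_0}\le\mathbb{S}_{\kappa_1}$ to bound the maximum and take the infimum. If anything, you are slightly more explicit than the paper in spelling out the role of $\psi^n$ and the equivariance $\mathbb{S}_{\kappa}\circ\psi^n=n\,\mathbb{S}_{\kappa}$.
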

 
\begin{proof}
Let $\kappa_0<\kappa_1$ be numbers in $J$. Let $u\in \mathcal{P}_n(\kappa_1)$. Then
$u(0)$ is the $n$-th iterate of an element of $M_{\kappa_1}$ and by Lemma \ref{montecarlo} we can join $u(0)$ to the $n$-th iterate of some element of $M_{\kappa_0}$ by a path in $\{\mathbb{S}_{\kappa_0}\leq \mathbb{S}_{\kappa_0}(u(0))\}$.
By concatenation we obtain a path
$v:[0,1] \rightarrow \mathcal{M}$
such that $v(0)\in \psi^n(M_{\kappa_0})$, $v([0,1/2])\subset \{\mathbb{S}_{\kappa_0}\leq \mathbb{S}_{\kappa_0}(u(0))\}$, and $v|_{[1/2,1]} = u(2(\cdot-1/2))$. Then $v$ belongs to $\mathcal{P}_n(\kappa_0)$ and, since $\mathbb{S}_{\kappa_0}\leq \mathbb{S}_{\kappa_1}$, we have
\[
\max_{[0,1]} \mathbb{S}_{\kappa_0}\circ v \leq \max_{[0,1]} \mathbb{S}_{\kappa_0}\circ u \leq \max_{[0,1]} \mathbb{S}_{\kappa_1}\circ u.
\]
Therefore,
\[
c_n(\kappa_0) \leq \max_{[0,1]} \mathbb{S}_{\kappa_0}\circ v \leq \max_{[0,1]} \mathbb{S}_{\kappa_1}\circ u,
\]
and by taking the infimum over all $u\in \mathcal{P}_n(\kappa_1)$ we conclude that
$c_n(\kappa_0) \leq c_n(\kappa_1)$.
\end{proof}

The next lemma is a simple generalization of \cite[Lemma 6.2]{amp13}, where we replace the elements $\mu_0,\mu_1\in \mathcal{M}$ by compact subsets $K_0,K_1\subset \mathcal{M}$. Its proof is based on Bangert's argument from \cite{ban80}.

\begin{lem}
Let $K_0$ and $K_1$ be compact sets in the same connected component of $\mathcal{M}$ and set
\[
\mathcal{R}_n := \set{u\in C^0([0,1],\mathcal{M})}{u(0)\in \psi^n(K_0), \; u(1)\in \psi^n(K_1)}.
\]
For some fixed $\kappa$ we set
\[
a_n := \inf_{u\in \mathcal{R}_n} \max_{\sigma\in [0,1]} \mathbb{S}_{\kappa}(u(\sigma)).
\]
Then there exists a number $A$ such that
\[
a_n \leq n \max_{K_0 \cup K_1} \mathbb{S}_{\kappa} + A \qquad \forall n\in \N.
\]
\end{lem}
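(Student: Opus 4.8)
The plan is to exhibit, for every $n\in\N$, one path $u_n\in\mathcal{R}_n$ whose maximal action is at most $n\max_{K_0\cup K_1}\mathbb{S}_\kappa$ plus a constant independent of $n$; since $a_n\le\max_{\sigma}\mathbb{S}_\kappa(u_n(\sigma))$, the lemma then follows. The starting observation is that $K_0$ and $K_1$ lie in the same connected component of $\mathcal{M}$, and connected components of the Hilbert manifold $\mathcal{M}$ are path-connected; so I would fix once and for all an element $\gamma_0\in K_0$, an element $\gamma_1\in K_1$, and a continuous path $\Gamma\colon[0,1]\to\mathcal{M}$ from $\gamma_0$ to $\gamma_1$. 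Writing $m:=\max_{K_0\cup K_1}\mathbb{S}_\kappa$ and $c_\Gamma:=\max_{\sigma\in[0,1]}\mathbb{S}_\kappa(\Gamma(\sigma))$ — both finite by compactness, and with $\mathbb{S}_\kappa(\gamma_0),\mathbb{S}_\kappa(\gamma_1)\le m$ — we have reduced the situation to exactly that of \cite[Lemma~5.2]{amp13} with $\mu_0=\gamma_0$ and $\mu_1=\gamma_1$, the only new feature being that $\mathbb{S}_\kappa(\gamma_0)$ and $\mathbb{S}_\kappa(\gamma_1)$ are now controlled by $m=\max_{K_0\cup K_1}\mathbb{S}_\kappa$.

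The path $u_n$ is built by Bangert's ``pulling one loop at a time'' technique (see \cite[pp.~86--87]{ban80} and \cite[Lemma~5.2]{amp13}): one deforms $\psi^n(\gamma_0)$ to $\psi^n(\gamma_1)$ in $n$ successive stages, the $k$-th stage (for $k=0,\dots,n-1$ and $\sigma\in[0,1]$) passing through the loop $\delta_{k,\sigma}\in\mathcal{M}$ which traverses $\gamma_1$ exactly $k$ times, then the loop $\Gamma(\sigma)$ once, then $\gamma_0$ exactly $n-1-k$ times, the blocks being glued by short connecting arcs that depend continuously on $\sigma$ and follow the curve $\sigma\mapsto(\text{base point of }\Gamma(\sigma))$ in $M$, together with one fixed arc closing up the loop. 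Using that $\mathbb{S}_\kappa$ is additive under concatenation and that $\mathbb{S}_\kappa(\beta^j)=j\,\mathbb{S}_\kappa(\beta)$, one gets
\[
\mathbb{S}_\kappa(\delta_{k,\sigma})=k\,\mathbb{S}_\kappa(\gamma_1)+(n-1-k)\,\mathbb{S}_\kappa(\gamma_0)+\mathbb{S}_\kappa(\Gamma(\sigma))+(\text{action of the connecting arcs})\le (n-1)m+c_\Gamma+C_1,
\]
where $C_1$ is a bound — independent of $n$ — for the total action of all auxiliary arcs occurring along the path, and where $k\,\mathbb{S}_\kappa(\gamma_1)+(n-1-k)\,\mathbb{S}_\kappa(\gamma_0)\le(n-1)m$ because $k\ge0$ and $n-1-k\ge0$. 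Matching $\psi^n(\gamma_0)$ and $\psi^n(\gamma_1)$ to the two ends of this concatenation costs two short ``spur'' homotopies whose action stays below $nm+C_1$. Reparametrizing onto $[0,1]$ yields $u_n\in\mathcal{R}_n$ with $\max_\sigma\mathbb{S}_\kappa(u_n(\sigma))\le nm+A$, where $A:=c_\Gamma+C_1+|m|$ depends on $\kappa,K_0,K_1,\Gamma$ but not on $n$; hence $a_n\le nm+A=n\max_{K_0\cup K_1}\mathbb{S}_\kappa+A$.

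The genuinely delicate step is the combinatorial bookkeeping in Bangert's construction: one must check that consecutive stages fit together into a globally continuous path (i.e.\ that $\delta_{k,1}$ coincides with the initial loop of stage $k+1$), that every loop along the path belongs to $H^1(\T,M)\times(0,+\infty)$, and — most importantly for the estimate — that the connecting arcs can be arranged so that only finitely many of them (in fact $O(1)$: one per seam adjacent to the single deforming loop $\Gamma(\sigma)$, plus one fixed closing arc) are ever needed along the whole path. If one instead inserted an arc between each consecutive pair of copies of $\gamma_0$ or of $\gamma_1$, the correction term would grow like $n$ and the bound would fail; avoiding this is precisely what Bangert's argument accomplishes, so I would invoke \cite{ban80,amp13} for these details rather than reproduce them.
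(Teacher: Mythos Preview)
Your proposal is correct and follows essentially the same approach as the paper: the paper does not give a detailed proof but simply notes that the lemma is a simple generalization of \cite[Lemma~5.2]{amp13} (replacing the single points $\mu_0,\mu_1$ by compact sets $K_0,K_1$) whose proof is based on Bangert's argument from \cite{ban80}, and your reduction --- fixing $\gamma_0\in K_0$, $\gamma_1\in K_1$, a connecting path $\Gamma$, and then invoking Bangert's ``pulling one loop at a time'' construction --- is precisely this generalization.
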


If we apply the above lemma to $\hat{\kappa}=\max J$, $K_0=M_{\hat{\kappa}}$, $K_1=\{\mu\}$ and use the fact that $\mathbb{S}_{\hat{\kappa}}(\mu)$ is negative and, by Lemma \ref{LJ} (iii), $\max_{M_{\hat{\kappa}}} \mathbb{S}_{\hat{\kappa}}$ is also negative, we obtain in particular that $c_n(\hat\kappa)\rightarrow -\infty$ for $n\rightarrow \infty$.
Since $c_n$ is monotonically increasing, we conclude that
\begin{equation}
\label{neg}
\lim_{n\rightarrow \infty} c_n = -\infty \qquad \mbox{uniformly on } J.
\end{equation}

\subsection{The monotonicity argument}

Let $c_n:J \rightarrow \R$ be the sequence of minimax functions which is defined in the previous section. By (\ref{neg}) there exists a natural number $n_0$ such that all the $c_n$'s are negative for $n\geq n_0$. The proof of the next lemma is based on Struwe's monotonicity argument from \cite{str90} and is similar to the proof of \cite[Lemma 7.1]{amp13}. The fact that we are dealing with a peculiar minimax class requires some extra care, and therefore we include a full proof.

\begin{lem}
\label{struwe}
Let $n\geq n_0$. Let $\bar{\kappa}$ be an interior point of $J$ at which $c_n$ is differentiable and such that the set $M_{\bar\kappa}$ is a finite union of critical circles. Then for every neighborhood $\mathcal{V}$ of the set 
\[
\mathrm{crit} (\mathbb{S}_{\bar\kappa}) \cap \mathbb{S}_{\bar\kappa}^{-1}(c_n(\bar\kappa))
\]
there exists an element $u$ of $\mathcal{P}_n(\bar\kappa)$ such that $\mathbb{S}_{\bar\kappa}(u(0)) < c_n(\bar\kappa)$ and
\[
u([0,1]) \subset \{ \mathbb{S}_{\bar\kappa} < c_n(\bar\kappa)\} \cup \mathcal{V}.
\]
In particular, $c_n(\bar\kappa)$ is a critical value of $\mathbb{S}_{\bar\kappa}$.
\end{lem}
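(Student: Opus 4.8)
The plan is to implement Struwe's monotonicity trick adapted to the present minimax class $\mathcal{P}_n(\bar\kappa)$. First I would use the monotonicity of $\kappa\mapsto c_n(\kappa)$ together with differentiability at $\bar\kappa$ to extract quantitative information about almost-minimizing paths. Pick a sequence $\kappa_j\downarrow\bar\kappa$ inside $J$; for each $j$ choose $u_j\in\mathcal{P}_n(\kappa_j)$ with $\max_\sigma\mathbb{S}_{\kappa_j}(u_j(\sigma))\le c_n(\kappa_j)+(\kappa_j-\bar\kappa)$. Because $\mathbb{S}_{\kappa}(x,T)-\mathbb{S}_{\bar\kappa}(x,T)=(\kappa-\bar\kappa)T$ and $c_n$ is differentiable at $\bar\kappa$ with derivative $c_n'(\bar\kappa)$, a standard computation shows that on the portion of $u_j$ where $\mathbb{S}_{\bar\kappa}(u_j(\sigma))$ is close to $c_n(\bar\kappa)$ — say $\ge c_n(\bar\kappa)-(\kappa_j-\bar\kappa)^{1/2}$ — the period $T$ of $u_j(\sigma)$ stays in a fixed bounded interval $[T_-,T_+]$ with $T_->0$, depending only on $c_n'(\bar\kappa)$ and $n$, not on $j$. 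This is the key point that brings the Palais–Smale condition into play, since $\mathbb{S}_{\bar\kappa}$ satisfies Palais–Smale on the region where $T$ is bounded and bounded away from $0$.

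Next I would run the deformation (negative gradient flow) of $\mathbb{S}_{\bar\kappa}$ on $u_j$, being careful to cut it off so that it acts only where $\mathbb{S}_{\bar\kappa}$ is near $c_n(\bar\kappa)$ and $T$ lies in the controlled interval, and to leave the endpoints essentially fixed (the endpoint $u_j(1)=\mu^n$ is a fixed point well below level $c_n(\bar\kappa)$ by \eqref{prima} and Lemma~\ref{LJ}(ii); the endpoint $u_j(0)\in\psi^n(M_{\bar\kappa})$ sits at level $n\min_{M_{\bar\kappa}}\mathbb{S}_{\bar\kappa}$, which by \eqref{prima} is $>n\mathbb{S}_{\bar\kappa}(\mu)$ but — here I must use $n\ge n_0$ so that $c_n(\bar\kappa)<0$, and Lemma~\ref{LJ}(iii) which makes $\max_{M_{\bar\kappa}}\mathbb{S}_{\bar\kappa}<0$ — is strictly below $c_n(\bar\kappa)$; hence $\psi^n(M_{\bar\kappa})\subset\{\mathbb{S}_{\bar\kappa}<c_n(\bar\kappa)\}$ and the starting endpoint is automatically in the desired sublevel set). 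If there were no critical points of $\mathbb{S}_{\bar\kappa}$ at level $c_n(\bar\kappa)$ inside the region where $T\in[T_-,T_+]$, the Palais–Smale condition would give a uniform lower bound on $\|d\mathbb{S}_{\bar\kappa}\|$ there, and the cut-off flow applied for a bounded time would push the whole path strictly below $c_n(\bar\kappa)$, producing an element of $\mathcal{P}_n(\bar\kappa)$ with max-value $<c_n(\bar\kappa)$, contradicting the definition of the minimax value. This already shows $c_n(\bar\kappa)$ is a critical value.

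To get the refined statement with the prescribed neighborhood $\mathcal{V}$ of $\mathrm{crit}(\mathbb{S}_{\bar\kappa})\cap\mathbb{S}_{\bar\kappa}^{-1}(c_n(\bar\kappa))$, I would instead excise $\mathcal{V}$ from the flow region: on the complement of $\mathcal{V}$ intersected with $\{|\mathbb{S}_{\bar\kappa}-c_n(\bar\kappa)|\le\text{small}\}\cap\{T\in[T_-,T_+]\}$ there are no critical points, so again $\|d\mathbb{S}_{\bar\kappa}\|\ge\delta>0$ there, and the cut-off negative gradient flow, run for a suitable bounded time on $u_j$ with $j$ large, deforms $u_j$ to a path $u$ with $\max\mathbb{S}_{\bar\kappa}\circ u\le c_n(\bar\kappa)$ and $u([0,1])\subset\{\mathbb{S}_{\bar\kappa}<c_n(\bar\kappa)\}\cup\mathcal{V}$, as required; the endpoints are untouched since they already lie below level $c_n(\bar\kappa)-$const. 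I expect the main obstacle to be the bookkeeping that gives the uniform period bound $[T_-,T_+]$ along near-critical segments purely from the differentiability of $c_n$ at $\bar\kappa$ — this is precisely where one must compare $\mathbb{S}_{\kappa_j}$ and $\mathbb{S}_{\bar\kappa}$ via the exact formula for their difference, and then combine with the fact that the only way $\mathbb{S}_\kappa$ can fail Palais–Smale is through the period $T$ degenerating to $0$ or $+\infty$ — and in making the cut-off functions for the gradient flow compatible with all these regions while keeping the endpoints and the membership in $\mathcal{P}_n(\bar\kappa)$ intact. The argument is otherwise parallel to \cite[Lemma~6.1]{amp13}, the only genuinely new wrinkle being that the initial endpoint ranges over the compact set $\psi^n(M_{\bar\kappa})$ rather than a single point, which is harmless because $M_{\bar\kappa}$ is assumed to be a finite union of critical circles all lying strictly below level $c_n(\bar\kappa)$.
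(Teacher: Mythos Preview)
Your overall Struwe strategy is right, but there is a genuine gap at the endpoints. You write that $u_j(0)\in\psi^n(M_{\bar\kappa})$, but since $u_j\in\mathcal{P}_n(\kappa_j)$ you only have $u_j(0)\in\psi^n(M_{\kappa_j})$; these sets need not coincide, and elements of $M_{\kappa_j}$ are not critical for $\mathbb{S}_{\bar\kappa}$, so after running the truncated negative gradient flow the resulting path is still not in $\mathcal{P}_n(\bar\kappa)$ and yields no contradiction with the definition of $c_n(\bar\kappa)$. This is exactly the ``peculiar minimax class'' issue the paper flags: before deforming, one must concatenate $u_j$ with a path furnished by Lemma~\ref{montecarlo} (lying in $\psi^n(\mathcal{U})\cap\{\mathbb{S}_{\bar\kappa}\le\mathbb{S}_{\bar\kappa}(u_j(0))\}$) connecting $u_j(0)$ to an element of $\psi^n(M_{\bar\kappa})$, thereby producing $v_j\in\mathcal{P}_n(\bar\kappa)$ to which the deformation is applied.

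Your justification that the starting endpoint lies strictly below $c_n(\bar\kappa)$ is also invalid: knowing $c_n(\bar\kappa)<0$ and $\max_{M_{\bar\kappa}}\mathbb{S}_{\bar\kappa}<0$ gives no ordering between them, and \eqref{prima} goes the wrong way. The paper instead uses the hypothesis that $M_{\bar\kappa}$ is a finite union of critical circles, hence consists of \emph{strict} local minimizers: any $v_j$ starting at $\alpha\in\psi^n(M_{\bar\kappa})$ and ending at $\mu^n$ (which has strictly smaller action by Lemma~\ref{LJ}(ii)) must cross the boundary of a neighborhood $\mathcal{W}$ of $\T\cdot\alpha$ on which $\inf_{\partial\mathcal{W}}\mathbb{S}_{\bar\kappa}>\mathbb{S}_{\bar\kappa}(\alpha)$; combined with $\limsup_j\max\mathbb{S}_{\bar\kappa}\circ v_j\le c_n(\bar\kappa)$ this gives $c_n(\bar\kappa)>\mathbb{S}_{\bar\kappa}(\alpha)$. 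A smaller issue: the lower period bound $T_->0$ does not follow from the monotonicity computation, which only bounds $T$ from above. The paper circumvents this by observing that $c_n(\bar\kappa)<0$ while $T\to0$ forces $\mathbb{S}_{\bar\kappa}\to0$, and then truncating the vector field on $\{\mathbb{S}_{\bar\kappa}\ge0\}$ to guarantee completeness of the flow.
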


\begin{proof}
The last assertion follows from the previous one by arguing by contradiction and choosing $\mathcal{V}$ to be the empty set. Therefore, we must prove the existence of a path $u$ which satisfies the above requirements.

Since $c_n$ is differentiable at the interior point $\bar\kappa\in J$, there exist a neighborhood $I$ of $\bar\kappa$ which is contained in $J$ and a number $C\geq 0$ such that
\begin{equation}
\label{linmod}
|c_n(\kappa)-c_n(\bar\kappa)| \leq C |\kappa - \bar\kappa|, \qquad \forall \kappa\in I.
\end{equation}
Let $(\kappa_h)\subset I$ be a strictly decreasing sequence which converges to $\bar\kappa$, and consider the infinitesimal sequence of positive numbers $\epsilon_h := \kappa_h - \bar\kappa$. Let $u_h$ be an element of $\mathcal{P}_n(\kappa_h)$ such that
\[
\max_{[0,1]} \mathbb{S}_{\kappa_h} \circ u_h \leq c_n(\kappa_h) + \epsilon_h.
\]
Let $\gamma=(x,T)$ be an element in the image of $u_h$. From (\ref{linmod}) we deduce that
\begin{equation}
\label{b1}
\mathbb{S}_{\bar\kappa}(\gamma) \leq \mathbb{S}_{\kappa_h}(\gamma) \leq c_n(\kappa_h) + \epsilon_h \leq c_n(\bar\kappa) + (C+1)\epsilon_h.
\end{equation}
If moreover $\gamma\in u_h([0,1])$ is such that $\mathbb{S}_{\bar\kappa}(\gamma)> c_n(\bar\kappa)-\epsilon_h$, then a second application of (\ref{linmod}) gives us the bound
\[
T = \frac{\mathbb{S}_{\kappa_h}(x,T) - \mathbb{S}_{\bar\kappa}(x,T)}{\kappa_h - \bar\kappa} \leq \frac{c_n(\kappa_h) + \epsilon_h - c_n(\bar\kappa)+\epsilon_h}{\epsilon_h} \leq C + 2.
\] 
It follows that
\[
u_h([0,1]) \subset \{ \mathbb{S}_{\bar\kappa} \leq c_n(\bar\kappa)-\epsilon_h\} \cup \mathcal{A}_h,
\]
where 
\[
\mathcal{A}_h := \set{(x,T)\in \mathcal{M}}{T \leq C+2, \; \mathbb{S}_{\bar\kappa}(x,T) \leq c_n(\bar\kappa) + (C+1)\epsilon_h}.
\]
By the estimate
\[
\mathbb{S}_{\bar\kappa}(x,T) = \frac{1}{2T} \int_{\T} |\dot{x}(s)|^2_{x(s)} \, ds - \int_{\T} x^*(\theta) + \bar\kappa T \geq \frac{1}{2T} \|\dot{x}\|_{L^2}^2 - \|\theta\|_{\infty} \|\dot{x}\|_{L^2},
\]  
the set $\mathcal{A}_h$ is bounded in $\mathcal{M}$, uniformly with respect to $h\in \N$. 

The point $u_h(0)$ belongs to $\psi^n(M_{\kappa_h})$ and by Lemma \ref{montecarlo} it can be joined to some element in $\psi^n(M_{\bar\kappa})$ by a path which remains in $\psi^n(\mathcal{U})$ and in 
\[
\{\mathbb{S}_{\bar\kappa} \leq \mathbb{S}_{\bar\kappa}(u_h(0))\} \subset \{\mathbb{S}_{\bar\kappa} \leq c_n(\bar\kappa) + (C+1) \epsilon_h\},
\]
where we have used again (\ref{b1}). By concatenating the latter path with $u_h$, we obtain a path $v_h : [0,1] \rightarrow \mathcal{M}$ such that $v_h(0)\in \psi^n(M_{\bar\kappa})$, $v_h(1)=\mu^n$, and
\[
v_h([0,1]) \subset \{ \mathbb{S}_{\bar\kappa} \leq c_n(\bar\kappa)-\epsilon_h\} \cup \mathcal{A}_h \cup \bigl( \psi^n(\mathcal{U}) \cap \{\mathbb{S}_{\bar\kappa} \leq c_n(\bar\kappa)+ (C+1)\epsilon_h\} \bigr).
\]
In particular, $v_h$ belongs to $\mathcal{P}_n(\bar\kappa)$.
From the uniform boundedness of $\mathcal{A}_h$ and the fact that $\psi^n(\mathcal{U})$ is also bounded, we deduce that there exists a bounded set $\mathcal{B}\subset \mathcal{M}$ which is independent of $h\in \N$ and such that
\begin{equation}
\label{dovev_h}
v_h([0,1]) \subset \{ \mathbb{S}_{\bar\kappa} \leq c_n(\bar\kappa)-\epsilon_h\} \cup \bigl( \mathcal{B} \cap \{\mathbb{S}_{\bar\kappa} \leq c_n(\bar\kappa)+ (C+1)\epsilon_h\} \bigr).
\end{equation}
In particular, we have
\begin{equation}
\label{dovec}
\limsup_{h\rightarrow \infty} \max_{[0,1]} \mathbb{S}_{\bar\kappa}\circ v_h \leq c_n(\bar\kappa).
\end{equation}
Since by assumption $M_{\bar\kappa}$ is a finite union of critical circles, it consists of strict local minimizers for $\mathbb{S}_{\bar\kappa}$. By the already mentioned \cite[Lemma 4.1]{amp13}, also $\psi^n(M_{\bar\kappa})$ consists of strict local minimizers for $\mathbb{S}_{\bar\kappa}$. Since $\psi^n(M_{\bar\kappa})$ consists of finitely many critical circles, up to the choice of a subsequence we may assume that $v_h(0)$ belongs to the same critical circle $\T\cdot \alpha$ for every $h\in \N$. In particular, 
\[
A_0:= \mathbb{S}_{\bar\kappa}(v_h(0))=\mathbb{S}_{\bar\kappa}(\alpha)
\] 
does not depend on $h$. Since $\T\cdot \alpha$ minimizes $\mathbb{S}_{\bar\kappa}$ strictly, it has a neighborhood $\mathcal{W}$ on which $\mathbb{S}_{\bar\kappa}\geq A_0$ and such that
\[
\inf_{\partial \mathcal{W}} \mathbb{S}_{\bar\kappa} > A_0.
\]
See the already mentioned \cite[Lemma 5.3]{amp13}.
Since 
\[
\mathbb{S}_{\bar\kappa}(v_h(1)) = \mathbb{S}_{\bar\kappa}(\mu^n) < \mathbb{S}_{\bar\kappa}(v_h(0))
\]
because of Lemma \ref{LJ} (ii), every path $v_h$ must meet the boundary of $\mathcal{W}$ and hence
\[
\inf_{h\in \N} \max_{[0,1]} \mathbb{S}_{\bar\kappa}\circ v_h > A_0.
\]
Together with (\ref{dovec}), the above strict inequality implies that
\[
c_n(\bar\kappa)> A_0.
\]
Using also the fact that $c_n(\bar\kappa)$ is negative (because $n\geq n_0$), we can find numbers $A_1$ and $A_2$ such that
\[
A_0 < A_1 < c_n(\bar\kappa) < A_2 < 0.
\]

Let $\nabla \mathbb{S}_{\bar\kappa}$ be the gradient vector field of $\mathbb{S}_{\bar\kappa}$ with respect to the standard product Riemannian metric on $\mathcal{M} = H^1(\T,M) \times (0,+\infty)$. By multiplying $-\nabla \mathbb{S}_{\bar\kappa}$ by a suitable smooth non-negative function we can construct a smooth bounded tangent vector field $X$ on $\mathcal{M}$ which vanishes on
\[
\{\mathbb{S}_{\bar\kappa}\leq A_0\} \cup \{\mathbb{S}_{\bar\kappa}\geq 0\},
\]
satisfies $d\mathbb{S}_{\bar\kappa} (X)\leq 0$ on $\mathcal{M}$, and 
\begin{equation}
\label{decr}
d\mathbb{S}_{\bar\kappa} (X) \leq - \min\{ \|d\mathbb{S}_{\bar\kappa}\|^2,1\} \qquad \mbox{on} \quad \{A_1 \leq \mathbb{S}_{\bar\kappa} \leq A_2\}.
\end{equation}
The flow $\phi$ of $X$ is well-defined on $\mathcal{M}\times [0,+\infty)$, because $X$ is bounded and the only source of non-completeness is $T$ going to zero, but this may happen only for negative-gradient flow lines for which $\mathbb{S}_{\bar\kappa}$ tends to zero (see \cite[Lemmata 3.2 and 3.3]{abb13}), and we have made $X$ vanish on $\{\mathbb{S}_{\bar\kappa}\geq 0\}$. Furthermore, since $X$ is bounded $\phi$ maps bounded sets into bounded sets.

We claim that if $h$ is large enough, then
\[
\phi_1(v_h([0,1])) \subset \{\mathbb{S}_{\bar\kappa}<c_n(\bar\kappa)\} \cup \mathcal{V}.
\]
This claim implies the thesis of this lemma: Indeed, the path $\phi_1 \circ v_h$ belongs to $\mathcal{P}_n(\bar\kappa)$, because 
\[
\mathbb{S}_{\bar\kappa}(\phi_1 \circ v_h(0)) = A_0 < c_n(\bar\kappa), \qquad \mathbb{S}_{\bar\kappa}(\phi_1 \circ v_h(1)) = \mathbb{S}_{\bar\kappa}(\mu^n) < A_0, 
\]
and $\phi$ fixes the points in $\{\mathbb{S}_{\bar\kappa}\leq A_0\}$.

There remains to prove the above claim. By (\ref{dovev_h}) and the properties of $X$,
\begin{equation}
\label{dovephi}
\phi([0,1] \times v_h([0,1]) \subset \{ \mathbb{S}_{\bar\kappa} \leq c_n(\bar\kappa)-\epsilon_h\} \cup \bigl( \mathcal{B}' \cap \{\mathbb{S}_{\bar\kappa} \leq c_n(\bar\kappa)+ (C+1)\epsilon_h\} \bigr),
\end{equation}
for some bounded subset $\mathcal{B}'$ of $\mathcal{M}$. Since $\mathbb{S}_{\bar\kappa}$ satisfies the Palais-Smale condition on bounded subsets of $\mathcal{M}$ on which $\mathbb{S}_{\bar\kappa}$ is bounded away from zero (see \cite[Lemmata 5.1 and 5.3]{abb13}), the set 
\[
K:= \mathcal{B}' \cap \mathrm{crit} (\mathbb{S}_{\bar\kappa}) \cap \mathbb{S}_{\bar\kappa}^{-1}(c_n(\bar\kappa))
\]
is compact. Since $K$ consists of fixed points of the flow $\phi$, it has a neighborhood  $\mathcal{V}'\subset \mathcal{V}$ such that 
\begin{equation}
\label{piccolo}
\phi([0,1]\times \mathcal{V}') \subset \mathcal{V}.
\end{equation}
Using again the fact that  $\mathbb{S}_{\bar\kappa}$ satisfies the Palais-Smale condition on bounded subsets of $\mathcal{M}$ on which $\mathbb{S}_{\bar\kappa}$ is bounded away from zero, we can find $\epsilon>0$ and $\delta\in (0,1]$ such that
\begin{equation}
\label{daps}
\|d \mathbb{S}_{\bar\kappa}\| \geq \delta \qquad \mbox{on} \quad \bigl( \mathcal{B}' \setminus \mathcal{V}' \bigr) \cap \{ c_n(\bar{\kappa}) - \epsilon \leq  \mathbb{S}_{\bar\kappa} \leq c_n(\bar{\kappa}) + \epsilon\}.
\end{equation}
Let $\sigma\in [0,1]$ be such that
\begin{equation}
\label{impo}
\mathbb{S}_{\bar\kappa}(\phi_1(v_h(\sigma))) \geq c_n(\bar{\kappa}) \qquad \mbox{and} \qquad \phi_1(v_h(\sigma)) \notin \mathcal{V}.
\end{equation}
By (\ref{piccolo}), $\phi_r(v_h(\sigma))$ cannot belong to $\mathcal{V}'$ for any $r\in [0,1]$. Together with (\ref{dovephi}) we deduce that
\[
\phi([0,1]\times \{v_h(\sigma)\}) \subset \bigl( \mathcal{B}' \setminus \mathcal{V}' \bigr) \cap \{ c_n(\bar{\kappa}) \leq  \mathbb{S}_{\bar\kappa} \leq c_n(\bar{\kappa}) + (C+1)\epsilon_h\}.
\]
When $h$ is so large that $(C+1)\epsilon_h \leq \epsilon$, (\ref{daps}) implies that
\[
\|d \mathbb{S}_{\bar\kappa}(\phi_r(u_h(\sigma)))\| \geq \delta, \qquad \forall r\in [0,1],
\]
and by (\ref{decr}) we find
\[
\begin{split}
c_n(\bar\kappa) &\leq  \mathbb{S}_{\bar\kappa}(\phi_1(v_h(\sigma)))\\
& = \mathbb{S}_{\bar\kappa}(v_h(\sigma)) + \int_0^1 \frac{d}{dr} \mathbb{S}_{\bar\kappa}(\phi_r(v_h(\sigma)))\, dr \\ &\leq c_n(\bar\kappa)+ (C+1)\epsilon_h + \int_0^1 d\mathbb{S}_{\bar\kappa}(\phi_r(v_h(\sigma)))[X(\phi_r(v_h(\sigma)))]\, dr \\ &\leq c_n(\bar\kappa) + (C+1)\epsilon_h - \delta^2.
\end{split}
\]
Since $(\epsilon_h)$ is infinitesimal, the above inequality implies that $h$ is smaller than some $h_0$. When $h$ is larger than $h_0$, (\ref{impo}) cannot occur and hence
\[
\phi_1(v_h([0,1])) \subset \{\mathbb{S}_{\bar\kappa}<c_n(\bar\kappa)\} \cup \mathcal{V},
\]
as claimed.
\end{proof}

\subsection{The proof of the theorem}\label{s:proof}

We can finally prove the theorem stated in the Introduction. Let $\kappa\in (0,c_u)$.  If the local minimizer $\alpha_{\kappa}$ is not strict, then $\T\cdot \alpha_{\kappa}$ is the limit of a sequence of critical circles of $\mathbb{S}_{\kappa}$ in $\mathcal{M} \setminus (\T\cdot \alpha_{\kappa})$, which determine infinitely many distinct periodic orbits of energy $\kappa$. Therefore, it is enough to prove the following statement: 

\medskip

\noindent {\em Every $\kappa_*$ in $(0,c_u)$ such that the local minimizer $\alpha_{\kappa_*}$ is strict has a neighborhood $J\subset (0,c_u)$ such that for almost every $\kappa\in J$ the energy level $E^{-1}(\kappa)$ has infinitely many periodic orbits.} 

\medskip

Fix $\kappa_*$ as above and let $J=J(\kappa_*)\subset (0,c_u)$ be the interval which is constructed in Section \ref{minimaxsec}. Let $c_n:J \rightarrow \R$ be the corresponding sequence of minimax functions and let $n_0\in \N$ be such that $c_n<0$ for every $n\geq n_0$. Since the countably many functions $c_n$ are monotone, Lebesgue's theorem implies that the set
\[
J':= \set{\kappa\in \mathrm{Int}(J)}{c_n \mbox{ is differentiable at $\kappa$ for every $n\geq n_0$}}
\]
has full measure in $J$. We shall prove that for every $\kappa\in J'$ the energy level $E^{-1}(\kappa)$ has infinitely many periodic orbits.

Fix some $\kappa\in J'$. If $M_{\kappa}$ consists of infinitely many critical circles, then $E^{-1}(\kappa)$ has clearly infinitely many periodic orbits. Therefore, we may assume that $M_{\kappa}$ consists of only finitely many critical circles, which thus consist of strict local minimizers.
Assume by contradiction that $E^{-1}(\kappa)$ has only finitely many periodic orbits. Then the critical set of $\mathbb{S}_{\kappa}$ consists of finitely many critical circles
\[
\T\cdot \gamma_1, \; \T\cdot \gamma_2, \dots, \T\cdot \gamma_k
\]
together with their iterates $\T\cdot \gamma_j^n$, for $1\leq j\leq k$ and $n\in \N$. By Theorem \ref{t:iterated_mountain_pass} we can find a natural number $n_1$ such that the following is true: For every $n\geq n_1$ and for every $j\in \{1,\dots,k\}$ there exists a neighborhood $\mathcal{W}_{j,n}$ of $\T\cdot \gamma_j^n$ such that any two points in $\{\mathbb{S}_{\kappa}<\mathbb{S}_{\kappa}(\gamma_j^n)\}$ which can be connected within 
\[
\{\mathbb{S}_{\kappa}<\mathbb{S}_{\kappa}(\gamma_j^n)\}\cup \mathcal{W}_{j,n}
\]
can be also connected in $\{\mathbb{S}_{\kappa}<\mathbb{S}_{\kappa}(\gamma_j^n)\}$. Moreover, the sets $\mathcal{W}_{j,n}$ can be chosen to be so small that their closures are pairwise disjoint. Set
\[
a:= \min_{1\leq j \leq k} \mathbb{S}_{\kappa}(\gamma_j^{n_1-1}).
\]
By (\ref{neg}) we can find a natural number $n\geq n_0$ such that $c_n(\kappa)<a$. By Lemma \ref{struwe}, $c_n(\kappa)$ is a critical value of $\mathbb{S}_{\kappa}$, and by our finiteness assumption, 
\[
\mathrm{crit} (\mathbb{S}_{\kappa}) \cap \mathbb{S}_{\kappa}^{-1}(c_n(\kappa)) = \T\cdot \gamma_{j_1}^{m_1} \cup \dots \cup \T\cdot \gamma_{j_h}^{m_h},
\]
for some non-empty subset $\{j_1,\dots,j_h\}$ of $\{1,\dots,k\}$ and some positive integers $m_1,\dots,m_h$. Since $c_n(\kappa)<a$, all the $m_i$'s are at least $n_1$. We apply Lemma \ref{struwe} with 
\[
\mathcal{V} := \mathcal{W}_{j_1,m_1} \cup \dots \cup \mathcal{W}_{j_h,m_h}
\] 
and we obtain a path $u\in \mathcal{P}_n(\kappa)$ with image in 
\[
\{\mathbb{S}_{\kappa}< c_n(\kappa)\} \cup \mathcal{V},
\]
and such that $\mathbb{S}_{\kappa}(u(0))<c_n(\kappa)$. Since also $\mathbb{S}_{\kappa}(u(1)) = \mathbb{S}_{\kappa}(\mu^n)<c_n(\kappa)$ by (\ref{prima}), and since the sets $\mathcal{W}_{j_i,m_i}$, $1\leq i \leq h$, have pairwise disjoint closures, the path $u$ is the concatenation of finitely many paths $v$, each of which has end-points in $\{\mathbb{S}_{\kappa}< c_n(\kappa)\}$ and is contained in $\{\mathbb{S}_{\kappa}< c_n(\kappa)\} \cup \mathcal{W}_{j_i,m_i}$ for some $i\in \{1,\dots,h\}$. By the property of the sets $\mathcal{W}_{j_i,m_i}$ stated above, the end-points of each of the $v$'s can be joined by paths $w$ in $\{\mathbb{S}_{\kappa}< c_n(\kappa)\}$. By concatenating the $w$'s, we obtain a path in $\{\mathbb{S}_{\kappa}< c_n(\kappa)\}$ which joins 
$u(0)$ and $u(1)$. Since such a path belongs to $\mathcal{P}_n(\kappa)$, this contradicts the definition of $c_n(\kappa)$. This contradiction proves that $E^{-1}(\kappa)$ has infinitely many periodic orbits.

\appendix
\section{Hyperbolic perturbation of unipotent symplectic matrices}

We recall that an element of $\mathrm{GL}(n,\R)$ is called unipotent when its spectrum  is equal to $\{1\}$, while it is called  hyperbolic when its spectrum does not intersect the unit circle of the complex plane. The argument in the proof of the following statement was  suggested to us by Marie-Claude Arnaud and Jairo Bochi. Notice that the statement becomes straightforward if $n=1$, since $\mathrm{Sp}(2)=\mathrm{SL}(2,\R)$.

\begin{prop}\label{p:hyperbolic_perturbation}
The unipotent elements of $\mathrm{Sp}(2n)$ belong to the closure of the space of hyperbolic elements of $\mathrm{Sp}(2n)$.
\end{prop}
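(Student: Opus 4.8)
The plan is to reduce the statement to a fact about the Lie algebra $\mathfrak{sp}(2n)$ of $\mathrm{Sp}(2n)$, and then to argue by induction on $n$. Fix the standard symplectic form $\Omega$ on $\R^{2n}$. If $U\in\mathrm{Sp}(2n)$ is unipotent, then $U-I$ is nilpotent, so the finite series $N:=\log U=\sum_{j\geq1}\tfrac{(-1)^{j+1}}{j}(U-I)^j$ is well defined, satisfies $\exp N=U$, and is nilpotent (its only eigenvalue is $0$). Moreover $N\in\mathfrak{sp}(2n)$: from $U^{\mathsf T}\Omega U=\Omega$ one gets $U^{\mathsf T}=\Omega U^{-1}\Omega^{-1}$, and applying $\log$ — which is natural under conjugation and satisfies $\log(A^{\mathsf T})=(\log A)^{\mathsf T}$ and $\log(A^{-1})=-\log A$ on unipotent matrices — yields $(\log U)^{\mathsf T}\Omega+\Omega\log U=0$. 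Since $\exp$ is continuous and, by the spectral mapping theorem, $\exp A$ has spectrum $\{e^\mu : \mu\in\sigma(A)\}$ with $|e^\mu|=e^{\re\mu}$ (so that $\exp A$ is hyperbolic if and only if $\sigma(A)\cap i\R=\varnothing$), it suffices to prove the following linear-algebraic claim:
\begin{quote}
\emph{every nilpotent $N\in\mathfrak{sp}(2n)$ lies in the closure of $\mathcal{H}(2n):=\{A\in\mathfrak{sp}(2n) : \sigma(A)\cap i\R=\varnothing\}$.}
\end{quote}
Indeed, any sequence $A_k\in\mathcal{H}(2n)$ with $A_k\to N$ then produces hyperbolic matrices $\exp A_k\to U$.

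I would prove the claim by induction on $n$, the case $n=0$ being trivial. Let $N\in\mathfrak{sp}(2n)$ be nilpotent and choose $v\neq 0$ with $Nv=0$. The line $\ell:=\R v$ is $\Omega$-isotropic, and the hyperplane $H:=\{w : \Omega(w,v)=0\}$ contains $\ell$ and is $N$-invariant, since $\Omega(Nw,v)=-\Omega(w,Nv)=0$ whenever $\Omega(w,v)=0$. Thus $N$ preserves the flag $\ell\subset H\subset\R^{2n}$ and induces a nilpotent endomorphism $\bar N$ of the symplectic reduction $W:=H/\ell$, which carries an induced symplectic form and has dimension $2n-2$; so $\bar N\in\mathfrak{sp}(W)\cong\mathfrak{sp}(2n-2)$. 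By the inductive hypothesis there are $\bar A_k\in\mathfrak{sp}(W)$ with $\sigma(\bar A_k)\cap i\R=\varnothing$ and $\bar A_k\to\bar N$. Pick now $v'$ with $\Omega(v,v')=1$ and set $W':=\{w : \Omega(w,v)=\Omega(w,v')=0\}$, so that $\R^{2n}=\R v\oplus W'\oplus\R v'$ and the projection $H\to H/\ell$ restricts to a symplectic isomorphism $W'\to W$. In the associated block decomposition one checks, from the Gram matrix of $\Omega$, that $\left(\begin{smallmatrix} a & \beta & c\\ 0 & B & d\\ 0 & 0 & -a\end{smallmatrix}\right)$ (with $a,c$ scalars, $\beta$ a row, $d$ a column, $B$ acting on $W'$) lies in $\mathfrak{sp}(2n)$ precisely when $B\in\mathfrak{sp}(W')$ and $\beta$ is the linear function of $d$ dictated by $\Omega$. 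The matrix $N$ has this form with $a=0$ and $B=N'$, the representative of $\bar N$ in $\mathfrak{sp}(W')$. Keeping $\beta,c,d$ fixed and replacing $a=0$ by $1/k$ and $N'$ by the representative $A'_k$ of $\bar A_k$, one obtains $N_k\in\mathfrak{sp}(2n)$; being block upper triangular, $\sigma(N_k)=\{1/k,-1/k\}\cup\sigma(A'_k)$ avoids $i\R$, and $N_k\to N$ as $k\to\infty$. This shows $N\in\overline{\mathcal{H}(2n)}$ and completes the induction.

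The only genuinely nontrivial ingredient is the block description of the parabolic subalgebra of $\mathfrak{sp}(2n)$ stabilizing the isotropic line $\ell$, namely the fact that one may modify the semisimple part of an element of this parabolic — here, passing from Levi part $(0,\bar N)$ to $(1/k,\bar A_k)$ — while keeping its nilpotent part, without leaving $\mathfrak{sp}(2n)$; this is the step in which the symplectic constraint is really used. Everything else is the continuity of $\exp$, elementary properties of $\log$ on unipotent matrices, and the triangular form of the spectrum. One could alternatively complete $N$ to an $\mathfrak{sl}_2$-triple $(N,h,f)$ and perturb to $N+tf$, but the odd Jordan blocks of $N$ (which occur with even multiplicity) then contribute spurious zero eigenvalues and would require a separate fix, so the inductive argument above seems cleaner.
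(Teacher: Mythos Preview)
Your proof is correct. The reduction to the Lie algebra via $\log$ is sound, the block description of the parabolic subalgebra stabilising an isotropic line is accurate (one finds $f=-a$, $B\in\mathfrak{sp}(W')$, $\beta$ determined by $d$, and $c$ free), and the inductive perturbation $a\mapsto 1/k$, $N'\mapsto A'_k$ stays inside $\mathfrak{sp}(2n)$ while producing a block-triangular matrix with spectrum $\{\pm 1/k\}\cup\sigma(A'_k)$ disjoint from $i\R$.

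The paper's route is different and somewhat more direct: it works in the group rather than in the Lie algebra and avoids induction. It first shows that a unipotent $P\in\mathrm{Sp}(2n)$ admits an invariant \emph{Lagrangian} subspace $V$, via a maximality argument that is essentially your inductive step run to completion (if a maximal isotropic invariant $V$ had $\dim V<n$, one would find in $V^\omega/V$ an eigenvector of the induced unipotent map, contradicting maximality). In a symplectic basis adapted to $V$ one then writes $P=\left(\begin{smallmatrix}A&B\\0&C\end{smallmatrix}\right)$ with $\sigma(A)=\sigma(C)=\{1\}$ and sets $P_t=\left(\begin{smallmatrix}e^tI&0\\0&e^{-t}I\end{smallmatrix}\right)P$; this is a product of symplectic matrices, converges to $P$ as $t\to0$, and for $t\neq0$ has spectrum $\{e^t,e^{-t}\}$. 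So the paper replaces your peeled-off isotropic line by a full Lagrangian found in one stroke, and your additive Lie-algebra perturbation by a single multiplicative one. Your argument, in exchange, makes the parabolic/Levi structure explicit and would transplant more readily to other reductive groups; the cost is the extra $\log$/$\exp$ bookkeeping and the induction.
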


\begin{proof}
Consider an arbitrary $P\in\mathrm{Sp}(2n)$ with spectrum  $\sigma(P)=\{1\}$. Let us prove that there exists a Lagrangian vector subspace $V\subset\R^{2n}$ invariant by $P$. In order to do this, let $V$ be a maximal isotropic subspace invariant by $P$, that is, an invariant isotropic subspace that is not strictly contained in another isotropic invariant subspace. We claim that $V$ is Lagrangian. Indeed, arguing by contradiction, suppose that $\dim V < n$. Let $V^\omega$ be the symplectic orthogonal of $V$ and notice that $P(V^\omega) = V^\omega$. By our hypothesis, $V$ is strictly contained in $V^\omega$ and therefore has a positive-dimensional  complementary subspace $V'$ in $V^\omega$. Using the decomposition $V^\omega = V \oplus V'$ we can write
\[
P|_{V^\omega} = \left(
\begin{matrix}
P|_V &  \pi_1\circ P|_{V'} \\
0 & \pi_2\circ P|_{V'}
\end{matrix} \right),
\]
where $\pi_1: V^\omega \to V$ and $\pi_2: V^\omega \to V'$ are the projections. Since $\sigma(P|_{V^\omega})=\sigma(P|_{V})=\{1\}$ and
\[
\det (P|_{V^\omega}-\lambda I) = \det (P|_{V}-\lambda I) \det (\pi_2\circ P|_{V'}-\lambda I),\qquad\forall\lambda\in\C,
\]
we infer that $\sigma(\pi_2\circ P|_{V'})=\{1\}$. Consequently, there exists $v \in V'$ such that $Pv-v \in V$. But this implies that $V \oplus \text{span}\{v\}$ is isotropic and invariant, contradicting the maximality of $V$.

Now, choose a symplectic basis $\{e_1,\dots,e_n,f_1,\dots,f_n\}$ such that $V=\text{span}\{e_1,\dots,e_n\}$. Using these coordinates we can write $P$ as
\[
P = \left(
\begin{matrix}
A & B \\
0 & C
\end{matrix} \right),
\]
where $A = P|_V$. For all $t\in\R$, we set
\[
P_{t}:=
\left(
\begin{matrix}
e^t I& 0 \\
0 & e^{-t} I
\end{matrix} \right)\cdot\left(
\begin{matrix}
A & B \\
0 & C
\end{matrix} \right)
=
\left(
\begin{matrix}
e^t A & e^t B \\
0 & e^{-t}C
\end{matrix} \right),
\]
Notice that $P_{t}\in\mathrm{Sp}(2n)$, being the product of two elements of $\mathrm{Sp}(2n)$. Clearly, $P_t$ depends smoothly on $t$, and  $P_0=P$. Finally, since $\sigma(A)=\{1\}$, we have that $e^t$ is an eigenvalue of $P_t$ with algebraic multiplicity $n$. This, together with the fact that $P_t$ is symplectic, implies that for all $t\neq0$ the matrix $P_t$ is hyperbolic with $\sigma(P_t) = \{e^t,e^{-t}\}$.
\end{proof}

\bibliographystyle{amsalpha}
\bibliography{nonlinear}

\end{document}